\theoremstyle{definition}
\numberwithin{equation}{section}
\newtheorem{theo}{Theorem}[section]
\newtheorem{defi}[theo]{Definition}
\newtheorem{lemm}[theo]{Lemma}
\newtheorem{cor}[theo]{Corollary}
\newtheorem{remk}[theo]{Remark}
\renewcommand{\a}{\alpha}
\newcommand{\oa}{\overline{\alpha}}
\renewcommand{\b}{\beta}
\newcommand{\ob}{\overline{\beta}}
\newcommand{\g}{\gamma}
\newcommand{\og}{\overline{\gamma}}
\newcommand{\e}{\varepsilon}
\newcommand{\n}{\nu}
\newcommand{\on}{\overline{\nu}}
\newcommand{\m}{\mu}
\newcommand{\om}{\overline{\mu}}
\newcommand{\oeta}{\overline{\eta}}
\newcommand{\gc}[1]{g^{\circ} ( #1 )}   
\newcommand{\p}[2]{ (#1)_{#2}}       
\newcommand{\pc}[2]{(#1)_{#2}^{\circ}}
\newcommand{\gh}[3]{
{_{2}F_{1}}\left(
\begin{matrix}
#1 \\
#2
\end{matrix}
; #3
\right)
}  
\newcommand{\kF}[9]{F^{#1}_{#2}\left({#3 \atop #4};{#5 \atop #6};{#7 \atop #8};#9\right)}
\newcommand{\bd}[1]{\boldsymbol{#1} }
\newcommand{\ol}[1]{\overline{#1}}
\title[Kamp\'e de F\'eriet hypergeometric functions over finite fields]{Kamp\'e de F\'eriet hypergeometric functions over finite fields}
\author[R. Ito]{Ryojun Ito}
\author[S. Kumabe]{Satoshi Kumabe}
\author[N. Akio]{Akio Nakagawa}
\author[Y. Nemoto]{Yusuke Nemoto}
\address[Ryojun Ito]{General Education Division, Oshima National College of Maritime Technology, 1091-1, Komatsu, Suo-Oshimacho, Oshima-gun, Yamaguchi, 742-2193, Japan}
\email{ito.ryojun\_math@icloud.com}
\address[Satoshi Kumabe]{Joint Graduate School of Mathematics for Innovation, Kyushu University, 744, Motooka, Nishi-ku, Fukuoka, 819-0395,  Japan}
\email{kuma511ssk@gmail.com}
\address[Akio Nakagawa]{Department of Mathematics and Informatics, Graduate School of Science, Chiba University, 1-33, Yayoicho,  Inage-ku,  Chiba,  263-8522, Japan}
\email{akio.nakagawa.math@icloud.com}
\address[Yusuke Nemoto]{Department of Mathematics and Informatics, Graduate School of Science, Chiba University, 1-33, Yayoicho,  Inage-ku,  Chiba,  263-8522, Japan}
\email{y-nemoto@waseda.jp}
\keywords{character sums,  hypergeometric functions,  reduction formulas, summation formulas}
\subjclass{11L05, 11T24, 33C70, 33C90}
\begin{document}
\maketitle

\begin{abstract}
Kamp\'e de F\'eriet hypergeometric functions  are two-variable hypergeometric functions, which are a generalization of Appell's functions.  It is known that they satisfy many reduction and summation formulas. 
In this paper, we define Kamp\'e de F\'eriet hypergeometric functions over finite fields and show analogous formulas. 
\end{abstract}

\section{Introduction}

Over the complex numbers, Kamp\'e de F\'eriet hypergeometric functions \cite{appkampe, sk} are defined by  
\begin{align*}
F_{A^{\prime};B^{\prime};C^{\prime}}^{A;B;C}\left(\left.
\begin{matrix}
\mathbf{a}  \\
\mathbf{a}^{\prime}
\end{matrix}
; 
\begin{matrix}
\mathbf{b} \\
\mathbf{b}^{\prime}
\end{matrix}
;
\begin{matrix}
\mathbf{c} \\
\mathbf{c}^{\prime}
\end{matrix}
\right| x, y
\right) 
= 
\sum_{m,n=0}^{\infty} 
\frac{(\mathbf{a})_{m+n} (\mathbf{b})_{m} (\mathbf{c})_{n}  }{(\mathbf{a}^{\prime})_{m+n} (\mathbf{b}^{\prime})_{m} (\mathbf{c}^{\prime})_{n}  } \frac{x^{m}y^{n}}{(1)_{m}(1)_{n}},
\end{align*}
where $\mathbf{a} = (a_{1}, \dots, a_{A}), \dots,  \mathbf{c}^{\prime} = (c_{1}^{\prime}, \dots, c_{C^{\prime}}^{\prime})$ are sequences of complex parameters 
with $a_{i}^{\prime},  b_{j}^{\prime},  c_{k}^{\prime}\not \in \mathbb{Z}_{\leq 0}$, 
and $(\mathbf{a})_{n} := \prod_{i=1}^{A} (a_{i})_{n}$ denotes the product of the Pochhammer symbol 
$(a)_{n} := \Gamma (a+n)/ \Gamma (a)$. 
These are a generalization of generalized hypergeometric functions ${}_{p}F_{q}(x)$ and the Appell functions $F_{1}$, $F_{2}$, $F_{3}$ and $F_{4}$. For example,  we have
\begin{align*}
&F_{q; 0 ; 0}^{p;0;0}\left(\left.
\begin{matrix}
a_{1}, \dots, a_{p}  \\
a_{1}^{\prime}, \dots, a_{q}^{\prime}
\end{matrix}
; 
\begin{matrix}
\hspace{1mm} \\
\hspace{1mm}
\end{matrix}
;
\begin{matrix}
\hspace{1mm} \\
\hspace{1mm}
\end{matrix}
\right| \frac{x}{2}, \frac{x}{2}
\right)
= {_{p}}F_{q}\left( 
\begin{matrix}
a_{1}, \dots, a_{p}  \\
a_{1}^{\prime}, \dots, a_{q}^{\prime}
\end{matrix}
; x
\right), \\
&F_{1;0;0}^{1;1;1}\left( \left.
\begin{matrix}
a \\
a^{\prime}
\end{matrix}
;
\begin{matrix}
b \\
\hspace{1mm}
\end{matrix}
;
\begin{matrix}
c \\
\hspace{1mm}
\end{matrix}
\right| 
x,y
\right)
=F_1 
\left( \left. 
a; b,c;a^{\prime}; x,y
\right.\right).
\end{align*}

Over finite fields, there are several definitions of hypergeometric functions of one variable due to Koblitz \cite{koblitz}, Greene \cite{greene}, McCarthy \cite{mc}, Fuselier-Long-Ramakrishna-Swisher-Tu \cite{flrst}, Katz \cite{katz}, and Otsubo \cite{otsubo}. 
As for two-variable functions, Li-Li-Mao \cite{llm} and He \cite{he2} define the Appell functions $F_{1}$, 
He-Li-Zhang \cite{hlz} and Ma \cite{ma} define functions $F_{2}$, He \cite{he1} defines functions $F_{3}$, Tripathi-Barman \cite{tb} define functions $F_{4}$,  and     
Tripathi-Saikia-Barman \cite{tsb} and Otsubo \cite{otsubo} define functions $F_{1}, \dots, F_{4}$. 
More generally, Otsubo also defines the Lauricella functions $F_{A}^{(n)}, \dots, F_{D}^{(n)}$, 
which are a multi-variable generalization of Appell's functions.

Similarly to generalized hypergeometric functions and Appell's functions, Kamp\'e de F\'eriet hypergeometric functions over the complex numbers satisfy many reduction and summation formulas. 
For example, Liu-Wang \cite{liuwang} prove the following reduction formulas: 
\begin{align}
\begin{split}
F_{1:0:0}^{1:1:1} \left( \left. 
\begin{matrix}
a \\
b
\end{matrix}
;
\begin{matrix}
c  \\
\hspace{1mm}
\end{matrix}
;
\begin{matrix}
b-c \\
\hspace{1mm}
\end{matrix} 
\right| 
x, y
\right)  
&=
(1-x)^{-a} {}_{2}F_{1} \left( 
\begin{matrix}
a , b-c  \\
b
\end{matrix}
; \cfrac{x-y}{x-1}
\right). 
\end{split} \label{cor2.3} \\
F_{1:0:1}^{1:1:2} \left( \left. 
\begin{matrix}
a \\
b
\end{matrix}
;
\begin{matrix}
c  \\
\hspace{1mm}
\end{matrix}
;
\begin{matrix}
b-c, d \\
e
\end{matrix} 
\right| 
x, x
\right)  
&=
(1-x)^{-a} {}_{3}F_{2} \left( 
\begin{matrix}
a , b-c , e-d \\
b, e
\end{matrix}
; \cfrac{x}{x-1}
\right).  \label{cor2.4}\\
\begin{split}
F_{1:0:0}^{1:1:1} \left( \left. 
\begin{matrix}
a \\
b
\end{matrix}
;
\begin{matrix}
c  \\
\hspace{1mm}
\end{matrix}
;
\begin{matrix}
b-c \\
\hspace{1mm}
\end{matrix} 
\right| 
x, x
\right)  
&= (1-x)^{-a}.
\end{split} \label{cor2.10} \\
F_{1:0:1}^{1:1:2} \left( \left. 
\begin{matrix}
a \\
b
\end{matrix}
;
\begin{matrix}
c  \\
\hspace{1mm}
\end{matrix}
;
\begin{matrix}
b-c, d \\
b+d
\end{matrix} 
\right| 
x, x
\right)  
&=(1-x)^{b-a-c} {}_{2}F_{1} \left( 
\begin{matrix}
b-c,  b+d-a \\
b+d
\end{matrix}
; x
\right).  \label{cor2.11} \\
F_{1:0:1}^{1:1:2} \left( \left. 
\begin{matrix}
a \\
b
\end{matrix}
;
\begin{matrix}
c  \\
\hspace{1mm}
\end{matrix}
;
\begin{matrix}
b-c, \frac{a-b}{2} \\
1+\frac{a+b}{2}
\end{matrix} 
\right| 
x, x
\right)  
&=(1-x)^{b-a-c} {}_{3}F_{2} \left( 
\begin{matrix}
b-c, 1+ \frac{b}{2}, \frac{b-a}{2} \\
\frac{b}{2}, 1 + \frac{a+b}{2}
\end{matrix}
; x
\right).  \label{cor2.13} \\
F_{1:0:0}^{0:2:2} \left( \left. 
\begin{matrix}
\hspace{1mm} \\
b
\end{matrix}
;
\begin{matrix}
a, c  \\
\hspace{1mm}
\end{matrix}
;
\begin{matrix}
b-a, b-c\\
\hspace{1mm}
\end{matrix} 
\right| 
x, y
\right)  
&=
(1-x)^{b-a-c} {}_{2}F_{1} \left( 
\begin{matrix}
b-a,  b-c \\
b
\end{matrix}
; x+y-xy
\right). \label{cor3.2} \\
F_{1:0:1}^{0:2:3} \left( \left. 
\begin{matrix}
\hspace{1mm} \\
b
\end{matrix}
;
\begin{matrix}
a,c  \\
\hspace{1mm}
\end{matrix}
;
\begin{matrix}
b-a, b-c, d \\
e
\end{matrix} 
\right| 
x, \cfrac{x}{x-1}
\right)  
&=
(1-x)^{b-a-c} {}_{3}F_{2} \left( 
\begin{matrix}
b-a, b-c,  e-d \\
b, e
\end{matrix}
; x
\right). \label{cor3.3} \\
\begin{split}
F_{1:0:0}^{0:2:2} \left( \left. 
\begin{matrix}
\hspace{1mm} \\
b
\end{matrix}
;
\begin{matrix}
a, c  \\
\hspace{1mm}
\end{matrix}
;
\begin{matrix}
b-c, d \\
\hspace{1mm}
\end{matrix} 
\right| 
x, \cfrac{x}{x-1}
\right)  
&=(1-x)^{-\a} {}_{2}F_{1} \left( 
\begin{matrix}
b-c, a+d   \\
b
\end{matrix}
; \cfrac{x}{x-1}
\right).
\end{split}  \label{cor3.6} \\
F_{1:0:1}^{0:2:3} \left( \left. 
\begin{matrix}
\hspace{1mm} \\
b
\end{matrix}
;
\begin{matrix}
a,c  \\
\hspace{1mm}
\end{matrix}
;
\begin{matrix}
b-c, d,  e \\
a+d+e
\end{matrix} 
\right| 
x, \cfrac{x}{x-1}
\right)  
&=
(1-x)^{-a} {}_{3}F_{2} \left( 
\begin{matrix}
b-c, a+d, a+e  \\
b, a+d+ e
\end{matrix}
; \cfrac{x}{x-1}
\right). \label{cor3.7} \\
F_{1:0:1}^{0:2:3} \left( \left. 
\begin{matrix}
\hspace{1mm} \\
b
\end{matrix}
;
\begin{matrix}
a,c  \\
\hspace{1mm}
\end{matrix}
;
\begin{matrix}
b-c, d,  -\frac{a}{2} \\
1 + \frac{a}{2} + d
\end{matrix} 
\right| 
x, \cfrac{x}{x-1}
\right)  
&=
(1-x)^{-a} {}_{4}F_{3} \left( 
\begin{matrix}
a+d, 1 + \frac{a+d}{2}, \frac{a}{2}, b-c \\
\frac{a+d}{2}, 1 + \frac{a}{2} +d, b
\end{matrix}
; \cfrac{x}{x-1}
\right). \label{cor3.9} \\
F_{1:0:1}^{2:0:1} \left( \left. 
\begin{matrix}
a, d \\
b
\end{matrix}
;
\begin{matrix}
\hspace{1mm} \\
\hspace{1mm}
\end{matrix}
;
\begin{matrix}
c \\
c+b
\end{matrix} 
\right| 
x, -x
\right)  
&=
(1-x)^{-a} {}_{2}F_{1} \left( 
\begin{matrix}
a, b+c-d \\
b+c
\end{matrix}
; \cfrac{x}{x-1}
\right). \label{cor4.2} \\
F_{1:0:1}^{2:0:1} \left( \left. 
\begin{matrix}
a, c \\
b
\end{matrix}
;
\begin{matrix}
\hspace{1mm} \\
\hspace{1mm}
\end{matrix}
;
\begin{matrix}
\frac{c-b}{2} \\
1+\frac{c+b}{2}
\end{matrix} 
\right| 
x, -x
\right)  
&=
(1-x)^{-a} {}_{3}F_{2} \left( 
\begin{matrix}
a, 1 + \frac{b}{2}, \frac{b-c}{2} \\
\frac{b}{2}, 1 + \frac{c+b}{2}
\end{matrix}
; \cfrac{x}{x-1}
\right). \label{cor4.4} 
\end{align}
Moreover, they also show summation formulas for Kamp\'e de F\'eriet hypergeometric functions by specializing parameters and arguments in these formulas and using summation formulas for functions ${}_{2}F_{1}$. 
For example, by \eqref{cor2.11} and the Kummer summation formula, they obtain \cite[Corollary 5.1]{liuwang}
\begin{align*}
F_{1:0:1}^{1:1:2} \left( \left. 
\begin{matrix}
a\\
b
\end{matrix}
;
\begin{matrix}
c \\
\hspace{1mm}
\end{matrix}
;
\begin{matrix}
b-c, \frac{1+a -b-c}{2}  \\
\frac{1 + a + b -c}{2}
\end{matrix} 
\right| 
-1,-1
\right)  
&= 2^{b-a-c}
\frac{\Gamma \left( \frac{1+a+b-c}{2} \right) \Gamma \left( 1 + \frac{b-c}{2} \right)}{\Gamma(1+b-c)\Gamma \left( \frac{1+a}{2} \right)}.
\end{align*}
We remark that the formulas \eqref{cor2.3} and \eqref{cor2.4} are originally obtained by Bailey \cite{bailey} and Cvijovi\'c-Miller \cite{cm}, respectively.

In this paper, we define Kamp\'e de F\'eriet hypergeometric functions over a finite field $\kappa$ with $q$ elements. 
As well as the complex case, our definition is a unified generalization of the Appell functions $F_{1}, \dots, F_{4}$ due to Tripathi-Saikia-Barman and Otsubo. 
Note that their definitions are different, but one can show that they coincide. 
Here we use Otsubo's notations. See \S 2 for details.  
A finite field analogue of the Pochhammer symbol and its variant are defined by 
the quotients of Gauss sums
\begin{align*}
\p{\a}{\n} = \frac{g(\a\n)}{g(\a)},  \hspace{6mm}
\pc{\a}{\n} = \frac{\gc{\a\n}}{ \gc{\a} },
\end{align*}
where $\a$, $\n \in \widehat{\kappa^{*}} := \mathrm{Hom}(\kappa^{*}, \mathbb{C}^{*})$. 
Then we define a Kamp\'e de F\'eriet hypergeometric function over $\kappa$ by the 
double sum
\begin{align*}
F_{A^{\prime}:  B^{\prime}:  C^{\prime}}^{A : B : C} \left( \left. 
\begin{matrix}
\bd{\a} \\
\bd{\a}^{\prime}
\end{matrix}
;
\begin{matrix}
\bd{\b}  \\
\bd{\b}^{\prime}
\end{matrix}
;
\begin{matrix}
\bd{\g} \\
\bd{\g}^{\prime}
\end{matrix} 
\right| 
x,  y
\right) 
= \frac{1}{(1-q)^{2}} \sum_{\n,    \m \in \widehat{\kappa^{*}}} 
\frac{ \p{\bd{\a}}{\n\m} \p{\bd{\b}}{\n}  \p{\bd{\g}}{\m}     }{\pc{ \bd{\a}^{\prime}  }{\n\m} \pc{\bd{\b}^{\prime} }{\n}  \pc{\bd{\g}^{\prime} }{\m}   \pc{\e}{\n} \pc{\e}{\m} }  \n(x) \m(y),
\end{align*}
where $\varepsilon$ is the trivial character and $\bd{\a}$,  $\bd{\b}$,  $\bd{\g}$,  $\bd{\a}^{\prime}$,  
$\bd{\b}^{\prime}$,  $\bd{\g}^{\prime}$ are elements of the free abelian monoid over $\widehat{\kappa^{*}}$. 
We will give finite field analogues of the reduction formulas above and, as their applications, show summation formulas. 
Our strategy is to apply Liu-Wang's proof in the case of finite fields. 
First, we show certain transformation formulas for hypergeometric double sums by slightly modifying
the  Euler and Pfaff transformation formulas for functions ${}_{2}F_{1}$ over $\kappa$. 
These formulas yield several reduction formulas for Kamp\'e de F\'eriet hypergeometric functions.  
Then, by specializing parameters and arguments and applying summation formulas for functions ${}_{2}F_{1}$, we obtain summation formulas for them.

We remark that the finite field analogues of the remaining formulas in \cite{liuwang} can be reduced to our formulas for the following reason. 
For any parameters $a$, $a+m \in \mathbb{C}$ with $m \in \mathbb{Z}$, their finite field analogues agree with the same character since the finite field analogue of the parameter $1$ is the trivial character. 
Hence, the finite field analogues of hypergeometric functions in the remaining formulas contain common parameters in the denominators and the numerators. 
Then, the finite field analogues of the remaining formulas agree with our results by the cancellation formula \eqref{cancellation} below.

To explain the strong similarities between complex and finite Kamp\'e de F\'eriet hypergeometric functions, 
one may expect the existence of algebraic varieties over number fields whose periods are 
those functions over $\mathbb{C}$ and whose point counting over a residue field $\kappa$ leads to 
those functions over $\kappa$. 
Actually, for generalized hypergeometric functions ${}_{n+1}F_{n}$ and the Appell-Lauricella functions, such varieties are known. See Koblitz \cite{koblitz} and the third author \cite{nakagawa}, respectively. 
As far as the authors know, integral representations of Kamp\'e de F\'eriet hypergeometric functions 
over $\mathbb{C}$, hence the corresponding algebraic varieties, are only known in some special cases. 
One may be able to establish sum representations of Kamp\'e de F\'eriet hypergeometric functions over 
$\kappa$ to find suitable algebraic varieties and then derive those functions over $\mathbb{C}$ as periods of the varieties.

This paper is constructed as follows:  
In \S 2, after introducing Otsubo's notations, we define Kamp\'e de F\'eriet hypergeometric functions over finite fields.  
Then we list some formulas for generalized hypergeometric functions over finite fields, which will be used in our proofs.  
In \S 3, we show transformation formulas for hypergeometric sums and some reduction and summation formulas for Kamp\'e de F\'eriet hypergeometric functions over finite fields.

\section{Definitions}
Let us recall the notations used in \cite{otsubo} to define Kamp\'e de F\'eriet hypergeometric functions over finite fields. 
Let $\kappa$ be a finite field of characteristic $p$ with $q$ elements.  
We define the parameter set $P$ by the free abelian monoid over $\widehat{\kappa^{*}} := \mathrm{Hom}(\kappa^{*}, \mathbb{C}^{*})$.  For any $\varphi \in \widehat{\kappa^{*}}$,  we set  $\varphi (0) = 0$.  
Let $\e \in \widehat{\kappa^{*}}$ be the trivial character and $\delta \colon \widehat{\kappa^{*}} \rightarrow \mathbb{C}$
the characteristic function of $\e$,  that is,  $\delta (\e) =1$ and $\delta (\varphi) = 0$ for $\varphi \neq \e$. 
Throughout this paper,  we fix a non-trivial additive character $\psi \in \widehat{\kappa}$.  
For a multiplicative character $\varphi \in \widehat{\kappa^{*}}$,  the Gauss sum, which is a finite field analogue of the gamma function, and its variant are defined by 
\begin{align*}
g(\varphi) = - \sum_{x \in \kappa} \varphi (x) \psi (x)  ,  \hspace{6mm} \gc{\varphi} = q^{\delta (\varphi)} g(\varphi).
\end{align*}
Then we define the Pochhammer symbol and its variant  by 
\begin{align*}
\p{\varphi}{\n} = \frac{g(\varphi\n)}{g(\varphi)},  \hspace{6mm}
\pc{\varphi}{\n} = \frac{\gc{\varphi\n}}{\gc{\varphi}},
\end{align*}
where $\varphi, \n \in \widehat{\kappa^{*}}$. 

Let $\deg \colon P \rightarrow \mathbb{Z}$ be the degree map. Let $(\hspace{1mm},   \hspace{1mm}) \colon  P \times P \rightarrow \mathbb{Z}_{\geq 0}$ be the symmetric paring 
extending $(\a,  \b) = \delta (\a \ob)$.  
We extend the Pochhammer symbols to  $\bd{\a} \in P$ by 
\begin{align*}
\p{\bd{\a}}{\n} := \prod_{\a \in P} {\p{\a}{\n}} ^{(\a,  \bd{\a})},   \hspace{5mm}
\pc{\bd{\a}}{\n} := \prod_{\a \in P} {\pc{\a}{\n}} ^{(\a,  \bd{\a})}.  
\end{align*}

With the notations as above, the hypergeometric function $F(\bd{\a}, \bd{\b}; x)$ on $\kappa$ with parameters $\bd{\a}$, $\bd{\b} \in P$ and a variable $x \in \kappa$ is defined by \cite[Definition 2.7]{otsubo}
\begin{align*}
F(\bd{\a}, \bd{\b}; x)
= \frac{1}{1-q} \sum_{\n \in \widehat{\kappa^{*}}} \frac{\p{\bd{\a}}{\n}}{\pc{\bd{\b}}{\n}} \n (x).
\end{align*}
When $\bd{\a} = \a_{1} + \cdots + \a_{A}$, $\bd{\b} = \e + \b_{1}+ \cdots + \b_{B}$, we also write 
\begin{align*}
F(\bd{\a}, \bd{\b}; x)
= {}_{A}F_{B} \left( 
\begin{matrix}
\a_{1}, \dots, \a_{A} \\
\b_{1}, \dots, \b_{B}
\end{matrix}
; x
\right).
\end{align*}

Now,  we define Kamp\'e de F\'eriet hypergeometric functions over $\kappa$.

\begin{defi}
We define the Kamp\'e de F\'eriet hypergeometric function on $\kappa^{2}$ with parameters $\bd{\a}$,  $\bd{\b}$,  $\bd{\g}$,  $\bd{\a}^{\prime}$, $\bd{\b}^{\prime}$,  $\bd{\g}^{\prime} \in P$ 
and variables $x$, $y \in \kappa$ by
\begin{align*}
F_{A^{\prime}:  B^{\prime}:  C^{\prime}}^{A : B : C} \left( \left. 
\begin{matrix}
\bd{\a} \\
\bd{\a}^{\prime}
\end{matrix}
;
\begin{matrix}
\bd{\b}  \\
\bd{\b}^{\prime}
\end{matrix}
;
\begin{matrix}
\bd{\g} \\
\bd{\g}^{\prime}
\end{matrix} 
\right| 
x,  y
\right) 
= \frac{1}{(1-q)^{2}} \sum_{\n,    \m \in \widehat{\kappa^{*}}} 
\frac{ \p{\bd{\a}}{\n\m} \p{\bd{\b}}{\n}  \p{\bd{\g}}{\m}     }{\pc{ \bd{\a}^{\prime}  }{\n\m} \pc{\bd{\b}^{\prime} }{\n}  \pc{\bd{\g}^{\prime} }{\m}   \pc{\e}{\n} \pc{\e}{\m} }  \n(x) \m(y) .
\end{align*}
Here $A, \dots, C^{\prime}$ denote the degrees of parameters $\bd{\a}, \dots, \bd{\g}^{\prime}$ respectively. 
\end{defi}
By definition, we have $F_{A^{\prime}:  B^{\prime}:  C^{\prime}}^{A : B : C} (x,y) =0$ when $xy=0$.  
In general, this function takes values in $\mathbb{Q}(\mu_{p(q-1)})$. 
When $\deg (\bd{\a}) + \deg (\bd{\b}) = \deg (\bd{\a}^{\prime}) + \deg (\bd{\b}^{\prime}) + 1$ and $\deg (\bd{\a}) + \deg (\bd{\g}) = \deg (\bd{\a}^{\prime}) + \deg (\bd{\g}^{\prime}) + 1$, 
one can show that 
its values are in $\mathbb{Q}(\mu_{q-1})$ and do not depend on the choice of an additive character $\psi$ by \cite[Lemma 2.4 (iii)]{otsubo}.

We list some formulas which will be used in our proofs below.

The finite field analogue of the reflection formula $\Gamma (s) \Gamma (1-s) = \pi /  \sin \pi s$ (cf.  \cite[Proposition 2.2 (iii)]{otsubo}): 
\begin{align}
g(\varphi) \gc{\overline{\varphi}} = \varphi (-1)q,  \label{reflection}
\end{align}
where $\overline{\varphi} = \varphi^{-1}$.  This implies 
\begin{align}
\p{\a}{\n} \pc{\oa}{\overline{\n}} = \n (-1).   \label{prelection}
\end{align}

The  transitivity of Pochhammer symbols (cf. \cite[Lemma 2.4 (i)]{otsubo}):
\begin{align}
\p{\a}{\n\m}  = \p{\a}{\n} \p{\a\n}{\m},  \hspace{6mm} \pc{\a}{\n\m} = \pc{\a}{\n} \pc{\a\n}{\m}.   \label{transitivity}
\end{align}

The shift and cancellation formulas \cite[Proposition 2.9,  Theorem 3.2]{otsubo}: 
\begin{align}
&F(\bd{\a},  \bd{\b};  x) = \frac{\p{\bd{\a}}{\varphi}}{\pc{\bd{\b}}{\varphi}} 
\varphi (x) F(\bd{\a} \varphi,  \bd{\b}\varphi; x), \label{shift}  \\
&F(\bd{\a} + \bd{\g},  \bd{\b} + \bd{\g};  x)
= q^{(\bd{\g},  \e)} \left( F(\bd{\a},  \bd{\b};  x) 
+ q^{-1} \sum_{\n \in \widehat{\kappa^{*}}} \frac{1 -q^{-(\bd{\g},  \n)} }{1-q^{-1}} \frac{\p{\bd{\a}}{\overline{\n}}}{\pc{\bd{\b}}{\overline{\n}}} \overline{\n} (x)  \right)  .  \label{cancellation}
\end{align}
Here we write $\bd{\a}\varphi = \a_{1}\varphi + \cdots + \a_{r} \varphi$ for $\bd{\a} = \a_{1} + \cdots + \a_{r}$.

For $x \in \kappa^{*}$, we have \cite[Corollary 3.4, Example 3.9 (ii)]{otsubo}
\begin{align}
{}_{1}F_{0} \left( 
\begin{matrix}
\m \\
\hspace{1mm}
\end{matrix}
; x
\right)
= \begin{cases}
\displaystyle  \om \left( 1 - x \right) & (\m \neq \e), \\
\displaystyle -q \delta \left( 1 - x \right) +1  & (\m = \e).
\end{cases}
\label{geom}
\end{align}

The finite field analogues of the Euler and Pfaff transformation formulas (cf. \cite[Theorem 3.14 (i), (ii)]{otsubo}): 
\begin{align}
\gh{\a, \b}{\g}{x} = \ol{\a\b}\g (1-x) \gh{\oa\g, \ob\g}{\g}{x}, \label{euler0} \\
\gh{\a, \b}{\g}{x} = \oa (1-x) \gh{\a, \ob\g}{\g}{\frac{x}{x -1}}, \label{pfaff0} 
\end{align}
where  $(\a + \b, \e + \g) =0$ and $x \neq 1$.

The  finite field analogue of the Euler-Gauss summation formula (cf.  \cite[Theorem 4.3]{otsubo}):
\begin{align} 
\gh{\a, \b}{\g}{1} = \frac{\gc{\g} g(\g\ol{\a\b})}{\gc{\g\oa} \gc{\g\ob}} , \label{eulergauss}
\end{align}
where $\a + \b \neq \g + \e$. This implies the following finite field analogue of the Vandermonde formula \cite[Remark 4.4 (ii)]{otsubo}: 
\begin{align}
\gh{\a, \on}{\g}{1} = \cfrac{(\oa\g)_{\n}}{(\g)^{\circ}_{\n}},   \label{Vande}
\end{align}
where $(\a,  \e + \g) =0$ and $\n \in \widehat{\kappa^{*}}$.

Assume $p \neq 2$. Then we have the finite field analogue of the Kummer summation formula (cf. \cite[Theorem 4.6 (ii)]{otsubo}): 
\begin{align}
\gh{\a^{2}, \b}{\a^{2}\ob}{-1} 
= \sum_{{\a^{\prime}}^{2} = \a^{2}} \frac{\gc{\a^{2}\ob} g(\a^{\prime}) }{g(\a^{2}) \gc{\a^{\prime}\ob}}. \label{kummer}
\end{align}
If $(\a^2+\b^2+\a\ob, \e)=0$, by \eqref{pfaff0} and \eqref{kummer}, we have 
\begin{align*}
\gh{\a^2, \b^2}{\a\b}{\dfrac{1}{2}}
= \a (4) \sum_{{\a^{\prime}}^{2} = \a^{2}} \frac{\gc{\a\b}  g(\a^{\prime}) }{ g(\a^{2}) \gc{\a^{\prime} \oa \b } }.
\end{align*}
If we use the duplication formula \cite[Theorem 3.10]{otsubo}
\begin{align*}
g(\a^{2}) = \frac{ \a(4) g(\a) g(\a \phi)}{ g(\phi) }
\end{align*}
and put $\a^{\prime} = \a \chi$ ($\chi^{2} = \e$),  we obtain 
\begin{align}
\gh{\a^2,\b^2}{\a\b}{\dfrac{1}{2}}=\sum_{\chi^2=\e} \dfrac{\gc{\a\b}g(\phi)}{g(\phi\chi\a)g(\chi\b)}, \label{gsecond} 
\end{align}
which is a finite field analogue of the Gauss second summation formula. Here $\phi$ denotes the quadratic character. By similar computation,  we also obtain the following 
finite field analogue of the Bailey summation formula
\begin{align}
\gh{ \a^2,\oa^2 }{ \g^2 }{ \dfrac{1}{2} } = \sum_{ \chi^2=\e }\dfrac{   \gc{\g}\gc{\phi\g} }{   g(\phi\chi\a\g) g(\chi\overline{\a}\g) }, \label{bailey}
\end{align}
if $(\a^2+\a^2\g^2+\a^2\og^2,\e)=0$.

Recall that the function $F(\bd{\a},  \bd{\b};  x)$ is said to be Saalsch\"utzian if $\a_{1} \cdots \a_{d} = \b_{1} \cdots \b_{d}$ 
(see \cite[Definition 4.1]{otsubo}).  
For a Saalsch\"utzian ${}_{3}F_{2}(1)$, we have the finite field analogue of the Saalsch\"utz formula (cf.  \cite[Theorem 4.11]{otsubo}): 
\begin{align}
{}_{3}F_{2} \left( 
\begin{matrix}
\a,  \b,  \g \\
\varphi,  \psi
\end{matrix}
;  1 
\right)
= \frac{\gc{\varphi} g(\a\overline{\psi}) g(\b \overline{\psi})  g(\g \overline{\psi})  }{g(\overline{\psi}) \gc{\oa\varphi}
\gc{\ob\varphi} \gc{\og\varphi} } 
+ \frac{\gc{\varphi} \gc{\psi} }{g(\a) g(\b) g(\g)},  \label{saalschutz}
\end{align}
where $\a\b\g = \varphi\psi$ and $\a + \b + \g \neq \e + \varphi + \psi$.

\section{Reduction and Summation formulas}
In this section, we give some transformation formulas for hypergeometric double sums 
and reduction formulas for Kamp\'e de F\'eriet hypergeometric functions over 
finite fields.  As an application, we also give some summation formulas for them.

\subsection{Formulas for $F_{1: 0:C^{\prime}}^{1:1:C} $}

%%%%%%%%%%%%%%%%%%%%%%%%%%% Ito's part1 %%%%%%%%%%%%%%%%%%%%%%%%%%%%%%%%%%%

Liu-Wang prove the following transformation formula \cite[Theorem 2.1]{liuwang}:
\begin{align*}
&\sum_{i,j=0}^{\infty} \Omega(j) \frac{\p{a}{i+j} \p{c}{i} \p{b - c}{j} }{\p{b}{i+j}} \frac{x^{i} y^{j}}{i!j!} \\
&= (1-x)^{-a} \sum_{n=0}^{\infty} \frac{\p{a}{n} \p{b - c}{n} }{\p{b}{n} n! } \left( \frac{x}{x-1}\right)^{n}
\sum_{j=0}^{n} \Omega (j) \frac{\p{-n}{j}}{j!} \left( \frac{y}{x} \right)^{j}.
\end{align*}
Here $\Omega (j)$ denotes an arbitrary complex sequence.

The following is an analogous formula. 

\begin{theo}\label{first trans}
Assume $(\a + \a\g,  \b) = (\g, \e) = 0$.  Let $f\colon  \widehat{\kappa^{*}} \rightarrow \mathbb{C}$ be a map.  
For any $x, y \in \kappa$ with $x \neq 0,  1$,   we have 
\begin{align*}
&\sum_{\n,\m \in \widehat{\kappa^{*}} } f(\m) \frac{ \p{\a}{\n\m}  \p{\g}{\n} \p{\b\og}{\m} }{\pc{\b}{\n\m}  \pc{\e}{\n} \pc{\e}{\m}} \n(x) \m(y) \\
&= \oa (1-x) \sum_{\eta \in \widehat{\kappa^{*}}} \frac{\p{\a + \b\og}{\eta}   }{\pc{\b + \e }{\eta} } \eta \left( \frac{x}{x-1} \right)
\sum_{\m \in \widehat{\kappa^{*}} } f(\m) \frac{\p{ \oeta }{\m}  }{\pc{\e}{\m}} \m \left( \frac{y}{x} \right)  \\
& \hspace{20mm}  + (1-q)^{2} f(\ob \g) \og(x)\ob\g(y) \frac{\pc{\e}{\b}}{\pc{\oa}{\b}}.
\end{align*}
\end{theo}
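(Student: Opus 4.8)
The plan is to carry over Liu--Wang's proof directly: collapse the inner sum to a ${}_{2}F_{1}$, apply the finite-field Pfaff transformation \eqref{pfaff0}, and recombine, while watching the two characters at which \eqref{pfaff0} degenerates. First I would fix $\m$ and evaluate the inner sum over $\n$. By the transitivity \eqref{transitivity}, $\p{\a}{\n\m}=\p{\a}{\m}\p{\a\m}{\n}$ and $\pc{\b}{\n\m}=\pc{\b}{\m}\pc{\b\m}{\n}$, so the $\n$-sum collapses to $\frac{\p{\a}{\m}}{\pc{\b}{\m}}(1-q)\gh{\a\m,\g}{\b\m}{x}$. Thus the left-hand side becomes $(1-q)\sum_{\m}f(\m)\frac{\p{\b\og}{\m}\p{\a}{\m}}{\pc{\e}{\m}\pc{\b}{\m}}\m(y)\gh{\a\m,\g}{\b\m}{x}$.

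Next I would apply \eqref{pfaff0} to each $\gh{\a\m,\g}{\b\m}{x}$. Its balancing hypothesis $(\a\m+\g,\e+\b\m)=0$ reduces, under $(\a+\a\g,\b)=(\g,\e)=0$, to $\delta(\a\m)+\delta(\g\ob\om)=0$, which holds for every $\m$ except $\m=\oa$ and $\m=\ob\g$. Applying \eqref{pfaff0}, expanding the new ${}_{2}F_{1}$ as a sum over a fresh character $\l$, merging $\p{\a}{\m}\p{\a\m}{\l}=\p{\a}{\m\l}$ (and likewise for $\b\og$ and $\b$) by \eqref{transitivity}, and setting $\eta=\m\l$, the prefactor $\overline{\a\m}(1-x)=\oa(1-x)\om(1-x)$ splits off $\oa(1-x)$; since $\om(1-x)\,\om\left(\frac{x}{x-1}\right)=\om(-x)$, the $x$-dependence folds into $\m(y/x)$. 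The residual $\m$-sum is then handled by the pointwise identity $\om(-1)\,\pc{\e}{\eta}=\p{\oeta}{\m}\,\pc{\e}{\om\eta}$, a direct consequence of the reflection formula \eqref{reflection} together with $\m(-1)=\om(-1)$. Carrying out this recombination for every $\m$ (i.e. formally applying \eqref{pfaff0} throughout) yields exactly the first term $S$ on the right-hand side.

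Since \eqref{pfaff0} is valid only for $\m\notin\{\oa,\ob\g\}$, the genuine left-hand side differs from $S$ by the Pfaff defects at these two characters, namely $\sum_{\m\in\{\oa,\ob\g\}}f(\m)\frac{\p{\b\og}{\m}\p{\a}{\m}}{\pc{\e}{\m}\pc{\b}{\m}}\m(y)(1-q)D(\m)$, where $D(\m)=\gh{\a\m,\g}{\b\m}{x}-\overline{\a\m}(1-x)\gh{\a\m,\b\og\m}{\b\m}{\frac{x}{x-1}}$. At $\m=\oa$ the first numerator parameter degenerates to $\e$; using the reduction $\gh{\e,\sigma}{\tau}{z}=1+qF(\sigma,\tau;z)$ and the closed form of $F(\sigma,\tau;z)$ obtained from the shift formula \eqref{shift} and \eqref{geom}, the reflection formula \eqref{reflection} gives $D(\oa)=0$, so this character contributes nothing. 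At $\m=\ob\g$ the transformed parameter $\b\og\m$ degenerates to $\e$, while the untransformed ${}_{2}F_{1}$ has a numerator parameter equal to its denominator parameter $\g$ (the coincidence contributing only a single $\l=\og$ term through the weight $q^{-\delta(\g\l)}$); evaluating both ${}_{2}F_{1}$'s in closed form by \eqref{geom}, the leading factors $\oa\b\og(1-x)$ cancel and a multiple of $\og(x)$ survives, which \eqref{reflection} converts into $(1-q)^{2}f(\ob\g)\og(x)\ob\g(y)\frac{\pc{\e}{\b}}{\pc{\oa}{\b}}$.

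The main obstacle is precisely this exceptional-character bookkeeping. The complex identity of Liu--Wang has no correction term, so the entire discrepancy is a finite-field effect produced by the $\delta$-terms hidden in the variant symbols $\pc{\cdot}{\cdot}$; extracting it requires understanding exactly how \eqref{pfaff0} breaks down when a numerator or denominator parameter becomes trivial, verifying that the $\m=\oa$ defect vanishes, and pinning down the constant $(1-q)^{2}$ together with the ratio $\frac{\pc{\e}{\b}}{\pc{\oa}{\b}}$ through repeated, careful use of the reflection formula \eqref{reflection}.
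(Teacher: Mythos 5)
Your proposal is correct and follows essentially the same route as the paper: collapse the inner $\n$-sum to $\gh{\a\m,\g}{\b\m}{x}$ via \eqref{transitivity}, apply the Pfaff transformation \eqref{pfaff0} with explicit corrections at the two exceptional characters $\m=\oa$ (where the defect vanishes) and $\m=\ob\g$ (where it produces the extra term), then recombine with $\eta=\n\m$ and \eqref{prelection}. The paper merely packages your ``Pfaff defect'' analysis as a separate statement (Lemma \ref{pfaff1}), with the same conclusions at both exceptional characters and the same use of \eqref{reflection} to reach the constant $\pc{\e}{\b}/\pc{\oa}{\b}$.
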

\begin{proof}
By \eqref{transitivity} and applying Lemma \ref{pfaff1} below,  the left hand side equals
\begin{align*}
&\sum_{\m\in \widehat{\kappa^{*}} } f(\m) \frac{\p{\a + \b\og }{\m}   }{\pc{\b + \e}{\m}    } \m(y) 
\sum_{\n \in \widehat{\kappa^{*}} } \frac{\p{\a\m  + \g}{\n}  }{\pc{\b\m + \e }{\n}  } \n(x) \\
&= (1-q)\sum_{\m \in \widehat{\kappa^{*}} } f(\m) \frac{\p{\a + \b\og }{\m}   }{\pc{\b + \e}{\m}    } \m(y) \gh{\a\m,  \g}{\b\m}{x} \\
&=(1-q)\sum_{\m \in \widehat{\kappa^{*}}} f(\m)  \frac{\p{\a + \b\og }{\m}   }{\pc{\b + \e}{\m}    } \m(y) 
\left( 
\oa\om (1-x) \gh{\a\m,  \og\b\m}{\b\m}{\frac{x}{x-1}} \right.  \\
&\hspace{40mm} \left.  + \delta (\og \b\m) (1-q) \frac{g(\a\ob)}{g(\a\g\ob) g(\og)} \og(x)
\right) \\
&= \oa (1-x) \sum_{\n,  \m \in \widehat{\kappa^{*}}} f(\m) \frac{\p{\a + \b\og}{\n\m}  }{\pc{\b}{\n\m} \pc{\e}{\n} \pc{\e}{\m} } 
\n \left( \frac{x}{x-1} \right) \m \left( \frac{y}{1-x} \right)  \\
& \hspace{10mm} + (1-q)^{2} f(\ob\g)  \frac{\p{\a + \b\og }{\ob\g}   }{\pc{\b + \e}{\ob\g}    } \ob\g (y)
\cdot \frac{g(\a\ob)}{g(\a\g\ob) g(\og)} \og (x) . 
\end{align*}
If we put $\n\m = \eta$ and use \eqref{prelection}, the first term is equal to
\begin{align*}
\oa (1-x) \sum_{\eta \in \widehat{\kappa^{*}}} \frac{\p{\a + \b\og }{\eta}  }{\pc{\b + \e }{\eta} } \eta \left( \frac{x}{x-1} \right)
\sum_{\m \in \widehat{\kappa^{*}}} f(\m) \frac{ \p{\oeta}{\m} }{\pc{\e}{\m} } \m \left( \frac{y}{x} \right). 
\end{align*}
Finally,  by \eqref{reflection},  the second term coincides with 
\begin{align*}
(1-q)^{2} f(\ob\g)\frac{\pc{\e}{\b}}{\pc{\oa}{\b}} \og(x) \ob\g(y).
\end{align*}
Therefore we obtain the formula.
\end{proof}

\begin{lemm}\label{pfaff1}
Assume $(\a + \a\g,  \b) = (\g, \e) = 0$.  For any $\m \in \widehat{\kappa^{*}}$ and $x \in \kappa \backslash \{ 1 \}$,  we have 
\begin{align*}
\gh{\a\m,  \g}{\b\m}{x} 
&= \oa\om (1-x) \gh{\a\m,  \og\b\m}{\b\m}{\frac{x}{x-1}} \\
& \hspace{20mm} + \delta (\m\b\og) (1-q) \frac{g(\a\ob)}{g(\a\g\ob) g(\og)} \og(x). 
\end{align*}
\end{lemm}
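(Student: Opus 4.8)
The plan is to read the identity as the finite-field Pfaff transformation \eqref{pfaff0} applied to $\gh{\a\m, \g}{\b\m}{x}$, with a correction inserted at the finitely many characters $\m$ where the hypothesis of \eqref{pfaff0} breaks down. Substituting $\a \mapsto \a\m$, $\b \mapsto \g$, $\g \mapsto \b\m$ into \eqref{pfaff0} turns the prefactor into $\overline{\a\m}(1-x) = \oa\om(1-x)$ and the upper/lower data into $\a\m$, $\og\b\m$ over $\b\m$, which is exactly the main term on the right-hand side. So I would first record that this main term is ``what \eqref{pfaff0} wants to give,'' and then quantify the defect.

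The first real step is to analyze the balancing condition of \eqref{pfaff0}, namely $(\a\m + \g, \e + \b\m) = 0$. Expanding through $(\varphi, \chi) = \delta(\varphi\overline{\chi})$ gives $\delta(\a\m) + \delta(\a\ob) + \delta(\g) + \delta(\g\ob\om)$, and the hypotheses $(\a + \a\g, \b) = 0$ and $(\g, \e) = 0$ kill the middle two terms, so the condition collapses to $\delta(\a\m) + \delta(\m\b\og) = 0$ (using $\delta(\g\ob\om) = \delta(\m\b\og)$). Hence \eqref{pfaff0} applies verbatim, and the correction vanishes because $\delta(\m\b\og)=0$, for every $\m$ except the two resonances $\m = \oa$ (where $\a\m = \e$) and $\m = \g\ob$ (where $\b\m = \g$ and $\delta(\m\b\og) = 1$). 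This disposes of the generic $\m$ and isolates two characters to check by hand.

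For both resonances a hypergeometric sum with a trivial upper parameter appears, and I would evaluate it by peeling it off with the shift formula \eqref{shift} down to a ${}_{1}F_{0}$ and then applying the geometric-series evaluation \eqref{geom}; the hypotheses $\a \neq \b$ and $\a\g \neq \b$ guarantee the relevant characters are non-trivial, so \eqref{geom} gives a clean value $\overline{\varphi}(1-w)$ with no $\delta(1-w)$ term. In the case $\m = \oa$ the correction is forced to vanish (since $\a\g \neq \b$ makes $\delta(\oa\b\og) = 0$), and the identity reduces to the degenerate Pfaff relation $\gh{\e, \g}{\b\oa}{x} = \gh{\e, \og\b\oa}{\b\oa}{x/(x-1)}$; reducing both sides to ${}_{1}F_{0}$ as above, the two coefficients agree after one application of the reflection formula \eqref{reflection}, the only subtlety being the sign check $\ob\a(-1) = \b\oa(-1)$, which holds because both values are $\pm 1$ and mutually inverse.

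The main obstacle is the resonant case $\m = \g\ob$, where the correction actually appears. Here the left side is $\gh{\a\g\ob, \g}{\g}{x}$, whose upper and lower $\g$ nearly cancel: comparing its defining sum with that of ${}_{1}F_{0}(\a\g\ob; x)$ shows they differ only in the single term $\n = \og$, and this discrepancy, simplified with \eqref{reflection}, is precisely $g(\a\ob)/(g(\a\g\ob)g(\og))\,\og(x)$. The delicate point is bookkeeping the factor $1-q$: one must show that the transformed term $\oa\om(1-x)\gh{\a\m, \og\b\m}{\b\m}{x/(x-1)}$ at $\m=\g\ob$ (whose upper parameter $\og\b\m$ is now trivial) reproduces the principal part $\og\b\oa(1-x)$ together with a further multiple of $\og(x)$, so that the two Gauss-sum pieces combine into exactly $(1-q)\,g(\a\ob)/(g(\a\g\ob)g(\og))\,\og(x)$. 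The clinching identity is $g(\g)g(\og)\g(-1) = q$ from \eqref{reflection}, which matches the coefficients; getting this constant right, rather than matching the geometric main terms, is where the computation must be done with care.
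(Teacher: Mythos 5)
Your proposal is correct and follows essentially the same route as the paper: apply the Pfaff transformation \eqref{pfaff0} for every $\m$ with $\m \neq \oa,\ \ob\g$ (exactly the two characters where the balancing condition fails), and check the two resonant cases by hand by reducing each side to a ${}_{1}F_{0}$ via the cancellation and shift formulas, evaluating with \eqref{geom}, and matching coefficients with the reflection formula \eqref{reflection}. The only (trivial) point the paper treats explicitly that you omit is the case $x=0$, where both sides vanish since every character kills $0$.
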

\begin{proof}
The case $x =0$ is clear, hence we may assume $x \neq 0$.
The formula in the case of $(\a\m + \g,  \e + \b\m) =0$, that is, $\m \neq \oa$,  $\ob\g$
follows from \eqref{pfaff0}. We show the remaining cases below.

Consider the case $\m = \oa$.  By \eqref{cancellation},  \eqref{shift} and \eqref{geom}, 
the left hand side agrees with 
\begin{align*}
\gh{\e,  \g}{\b\oa}{x} 
= q F (\g,  \b\oa;  x) + 1
&= q \frac{\p{\g}{\ob\a}}{\pc{\b\oa}{\ob\a}} \ob\a (x) {}_{1}F_{0} \left( 
\begin{matrix}
\a\g\ob \\
\hspace{2mm}
\end{matrix}
; x \right) + 1 \\
&= \frac{g(\a\g\ob) \gc{\oa\b} }{g(\g)} \a\ob(x) \ol{\a\g} \b(1-x) + 1 \\
&= \frac{g(\a\g\ob) g(\oa\b) }{g(\g)} \a\ob(x) \ol{\a\g} \b(1-x) + 1.
\end{align*}
Here we used the assumption $(\a,  \b)=0$ in the last equality. Similarly to the computation above,  
we know that the right hand side is equal to
\begin{align*}
\gh{\e,  \ol{\a\g} \b}{\b\oa}{\frac{x}{x-1}}
&= q F \left( \ol{\a\g}\b,  \b\oa;  \frac{x}{x-1} \right) + 1 \\
&= q \frac{\p{\ol{\a\g} \b }{\ob\a}}{ \pc{\b\oa}{\ob\a} } \ob\a \left( \frac{x}{x-1} \right) 
{}_{1}F_{0} \left( 
\begin{matrix}
\og \\
\hspace{2mm}
\end{matrix}
; \frac{x}{x-1}
\right)  + 1  \\
&= \frac{g (\og) \gc{\b\oa} }{g(\ol{\a\g}\b)}  \ob\a \left( \frac{x}{x-1} \right) \g \left( 1 - \frac{x}{x-1} \right) +1 \\
&= \frac{g(\a\g\ob) g(\oa\b) }{g(\g)} \a\ob(x) \ol{\a\g} \b(1-x) + 1.
\end{align*} 
Here we used \eqref{reflection} and the assumption $(\a + \a\g,  \b)=(\g, \e) = 0$ in the last equality. 

If $\m = \ob\g$, by \eqref{cancellation},  \eqref{shift} and \eqref{geom}, 
we can show that both sides are equal to 
\begin{align*}
\oa\og\b (1-x) + \frac{g(\a\ob)}{g(\a\g\ob)g(\og)} \og (x).
\end{align*}
This proves the lemma.
\end{proof}

By applying Theorem \ref{first trans} to specific maps $f$,  we have the following reduction formulas, 
which are finite field analogues of \eqref{cor2.3} and \eqref{cor2.4}.

\begin{cor}\label{firstcor}
Assume $(\a + \a\g,  \b) = (\g, \e) = 0$.
\begin{enumerate}
\item  For any $x, y \in \kappa^{*}$ with $x \neq 1$,  we have
\begin{align*}
F_{1: 0:0}^{1:1:1} \left( \left. 
\begin{matrix}
\a \\
\b
\end{matrix}
;
\begin{matrix}
\g  \\
\hspace{1mm}
\end{matrix}
;
\begin{matrix}
\b\og \\
\hspace{1mm}
\end{matrix} 
\right| 
x,  y
\right)  &= \oa (1-x) \gh{\a,  \b\og}{\b}{\frac{x-y}{x-1}} \\
& \hspace{5mm} + \oa (1-x) \delta (x-y)   
+ \frac{\pc{\e}{\b}}{ \pc{\oa}{\b} } \og (x) \ob \g (y).
\end{align*}

\item Assume $(\varphi,  \e+\rho) =0$. For any $x \in \kappa \backslash \{ 0, 1 \}$, we have
\begin{align*}
F_{1: 0:1}^{1:1:2} \left( \left. 
\begin{matrix}
\a \\
\b
\end{matrix}
;
\begin{matrix}
\g  \\
\hspace{1mm}
\end{matrix}
;
\begin{matrix}
\b\og , \varphi \\
\rho
\end{matrix} 
\right| 
x,  x
\right) 
&= \oa (1-x) 
{}_{3}F_{2} \left( 
\begin{matrix}
\a,  \b\og,  \overline{\varphi} \rho  \\
\b , \rho
\end{matrix}
;
\frac{x}{x-1}
\right) \\
&\hspace{20mm}+ \frac{\p{\varphi}{\ob\g} \pc{\e}{\b} }{\pc{\rho}{\ob\g}  \pc{\oa}{\b} } \ob (x).
\end{align*}
\end{enumerate}
\end{cor}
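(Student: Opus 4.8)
The plan is to read both identities off Theorem \ref{first trans} by choosing the auxiliary map $f$ so that its left-hand side becomes $(1-q)^{2}$ times the Kamp\'e de F\'eriet sum in question; the right-hand side then reduces to a single-variable sum that one evaluates using the summation formulas of \S 2, after which dividing by $(1-q)^{2}$ gives the stated reduction.

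For (i) I would take $f \equiv 1$. By the definition of the double sum, the left-hand side of Theorem \ref{first trans} is then exactly $(1-q)^{2} F_{1:0:0}^{1:1:1}(x,y)$. On the right-hand side the inner $\m$-sum is $\sum_{\m} \frac{\p{\oeta}{\m}}{\pc{\e}{\m}} \m(y/x) = (1-q)\,{}_{1}F_{0}(\oeta; y/x)$; since $x,y \in \kappa^{*}$ forces $y/x \in \kappa^{*}$, I evaluate it by \eqref{geom}, obtaining $(1-q)\eta(1 - y/x)$ when $\eta \neq \e$ and $(1-q)(1 - q\,\delta(x-y))$ when $\eta = \e$. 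For $\eta \neq \e$ the characters combine as $\eta(\tfrac{x}{x-1})\,\eta(1 - \tfrac{y}{x}) = \eta(\tfrac{x-y}{x-1})$, so after adding and subtracting the $\eta = \e$ term $\e(\tfrac{x-y}{x-1})$ the outer sum over all $\eta$ assembles into $(1-q)\,\gh{\a, \b\og}{\b}{\frac{x-y}{x-1}}$. The remaining scalars collapse, via the elementary identity $\e(\tfrac{x-y}{x-1}) + \delta(x-y) = 1$ (valid because $x \neq 1$), into the single correction $\oa(1-x)\delta(x-y)$; finally the tail of Theorem \ref{first trans} with $f(\ob\g) = 1$ supplies $\frac{\pc{\e}{\b}}{\pc{\oa}{\b}} \og(x)\ob\g(y)$ after dividing by $(1-q)^{2}$.

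For (ii) I would instead take $f(\m) = \frac{\p{\varphi}{\m}}{\pc{\rho}{\m}}$ and set $y = x$, so that the left-hand side becomes $(1-q)^{2} F_{1:0:1}^{1:1:2}(x,x)$. Now $y/x = 1$, hence $\m(y/x) = 1$ for every $\m$, and the inner sum is $\sum_{\m} \frac{\p{\varphi + \oeta}{\m}}{\pc{\e + \rho}{\m}} = (1-q)\,\gh{\varphi, \oeta}{\rho}{1}$. The hypothesis $(\varphi, \e + \rho) = 0$ is exactly what is required to apply the Vandermonde formula \eqref{Vande}, giving $\gh{\varphi, \oeta}{\rho}{1} = \frac{\p{\overline{\varphi}\rho}{\eta}}{\pc{\rho}{\eta}}$ for every $\eta$ with no degenerate case. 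Merging Pochhammer symbols, the outer $\eta$-sum becomes $(1-q)\,{}_{3}F_{2}(\a, \b\og, \overline{\varphi}\rho; \b, \rho; \tfrac{x}{x-1})$, which after dividing by $(1-q)^{2}$ is the first term of the claim; the tail of Theorem \ref{first trans} contributes $\frac{\p{\varphi}{\ob\g}}{\pc{\rho}{\ob\g}} \frac{\pc{\e}{\b}}{\pc{\oa}{\b}} \og(x)\ob\g(x)$, and since $\og(x)\ob\g(x) = \ob(x)$ this matches the stated tail.

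The routine steps are the Pochhammer multiplications such as $\p{\a + \b\og}{\eta}\,\p{\overline{\varphi}\rho}{\eta} = \p{\a + \b\og + \overline{\varphi}\rho}{\eta}$ together with the recognition of each single sum as a ${}_{p+1}F_{p}$. The one genuinely delicate point is the trivial-character bookkeeping in (i): the inner ${}_{1}F_{0}$ behaves differently at $\eta = \e$, and the Dirac term coming from \eqref{geom} must be reconciled with the $\eta = \e$ contribution that the assembled ${}_{2}F_{1}$ already carries. I expect the main work to be verifying that these degenerate pieces amalgamate precisely into the single summand $\oa(1-x)\delta(x-y)$, which I would confirm by treating the cases $x \neq y$ and $x = y$ separately and using $\gh{\a, \b\og}{\b}{0} = 0$.
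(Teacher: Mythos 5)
Your proposal is correct and follows the paper's own proof essentially verbatim: both parts apply Theorem \ref{first trans} with $f\equiv 1$ (resp. $f(\m)=\p{\varphi}{\m}/\pc{\rho}{\m}$), evaluate the inner sum by \eqref{geom} (resp. the Vandermonde formula \eqref{Vande}), and handle the $\eta=\e$ degeneracy exactly as the paper does, the identity $\e(\tfrac{x-y}{x-1})+\delta(x-y)=1$ being the same bookkeeping the paper carries out when it writes $-\e(\tfrac{x-y}{x-1})-q\delta(x-y)+1=(1-q)\delta(x-y)$.
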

\begin{proof}
(i) By applying Theorem \ref{first trans} to $f(\mu) \equiv 1$,  we have 
\begin{align*}
& F_{1: 0:0}^{1:1:1} \left( \left. 
\begin{matrix}
\a \\
\b
\end{matrix}
;
\begin{matrix}
\g  \\
\hspace{1mm}
\end{matrix}
;
\begin{matrix}
\b\og \\
\hspace{1mm}
\end{matrix} 
\right| 
x,  y
\right) 
= \frac{1}{(1-q)^{2}} 
\sum_{\n,  \m \in \widehat{\kappa^{*}}} \frac{ \p{\a}{\n\m} \p{\g}{\n} \p{\b\og}{\m}   }{  \pc{\b}{\n\m}  \pc{\e}{\n} \pc{\e}{\m}  } \n (x) \m (y)  \\
&= \frac{1}{(1-q)^{2}} 
\oa (1-x) 
\sum_{\eta \in \widehat{\kappa^{*}}} \frac{\p{\a + \b\og }{\eta} }{\pc{\b + \e}{\eta}  } \eta \left( \frac{x}{x-1} \right) 
\sum_{\m \in \widehat{\kappa^{*}}} \frac{ \p{\oeta}{\m}  }{\pc{\e}{\m}} \m \left( \frac{y}{x} \right)  \\
&\hspace{40mm}+\og(x)\ob\g(y) \frac{\pc{\e}{\b}}{\pc{\oa}{\b}} \\
&= \frac{1}{1-q} \oa (1-x) 
\sum_{\eta \in \widehat{\kappa^{*}}} \frac{\p{\a + \b\og }{\eta} }{\pc{\b + \e}{\eta}  } \eta \left( \frac{x}{x-1} \right) 
{}_{1}F_{0} \left(
\begin{matrix}
\oeta \\
\hspace{1mm}
\end{matrix}
;
\frac{y}{x} 
\right) \\
&\hspace{40mm}+ \og(x)\ob\g(y) \frac{\pc{\e}{\b}}{\pc{\oa}{\b}} .
\end{align*}
If we use \eqref{geom},   the sum above coincides with 
\begin{align*}
&\sum_{\eta \neq \e} \frac{\p{\a + \b\og }{\eta} }{\pc{\b + \e}{\eta}  }  \eta \left( \frac{x}{x-1} \right) \eta \left( 1 - \frac{y}{x} \right) 
-q \delta \left( 1 - \frac{y}{x}  \right) +1 \\
&= \sum_{\eta \in \widehat{\kappa^{*}}} \frac{\p{\a + \b\og }{\eta} }{\pc{\b + \e}{\eta}  } \eta \left( \frac{x-y}{x-1} \right) 
- \e \left( \frac{x-y}{x-1} \right) -q \delta \left( x-y \right) +1\\
&= (1-q) \gh{\a,  \b\og}{\b}{\frac{x-y}{x-1}} + (1-q)\delta (x-y) .
\end{align*}
Therefore we obtain the formula.

(ii) If we apply Theorem \ref{first trans} to $\displaystyle f(\m) = \p{\varphi}{\m} /  \pc{\rho}{\m}$ and $x=y$,  we have
\begin{align*}
& F_{1: 0:1}^{1:1:2} \left( \left. 
\begin{matrix}
\a \\
\b
\end{matrix}
;
\begin{matrix}
\g  \\
\hspace{1mm}
\end{matrix}
;
\begin{matrix}
\b\og , \varphi \\
\rho
\end{matrix} 
\right| 
x,  x
\right) \\
&= \frac{1}{(1-q)^{2}}
\sum_{\n,  \m \in \widehat{\kappa^{*}}} \frac{ \p{\a}{\n\m} \p{\g}{\n}  \p{\b\og }{\m} \p{\varphi}{\m}  }{  \pc{\b}{\n\m} \pc{\rho}{\m} \pc{\e}{\n} \pc{\e}{\m}  } \n (x) \m (x)  \\
&= \frac{1}{(1-q)^{2}} \oa (1-x) 
\sum_{\eta \in \widehat{\kappa^{*}}} \frac{\p{\a + \b\og }{\eta}  }{\pc{\b + \e }{\eta} } \eta \left( \frac{x}{x-1} \right) 
\sum_{\m \in \widehat{\kappa^{*}}} \frac{\p{  \varphi + \oeta }{\m}  }{\pc{ \rho + \e }{\m} } \m (1)   \\
&\hspace{40mm}+ \frac{\p{\varphi}{\ob\g}  \pc{\e}{\b} }{\pc{\rho}{\ob\g} \pc{\oa}{\b} } \ob (x) .
\end{align*}
Note that we assume $(\varphi , \rho +  \e) =0$. By \eqref{Vande},  we have
\begin{align*}
\frac{1}{1-q}\sum_{\m \in \widehat{\kappa^{*}}} \frac{\p{  \varphi + \oeta }{\m}  }{\pc{ \rho + \e }{\m} }  \m (1) 
= \gh{\varphi,  \oeta}{\rho}{1}
= \frac{\p{ \overline{\varphi} \rho }{\eta}  }{\pc{\rho}{\eta}} .
\end{align*}
Therefore,  we obtain 
\begin{align*}
& \frac{1}{(1-q)^{2}}
\sum_{\eta \in \widehat{\kappa^{*}}} \frac{\p{\a + \b\og }{\eta}  }{\pc{\b + \e }{\eta} } \eta \left( \frac{x}{x-1} \right) 
\sum_{\m \in \widehat{\kappa^{*}}} \frac{\p{  \varphi + \oeta }{\m}  }{\pc{ \rho + \e }{\m} } \m (1)    \\
&= \frac{1}{1-q} \sum_{\eta \in \widehat{\kappa^{*}}} \frac{\p{\a  + \b\og + \overline{\varphi} \rho}{\eta}  }{\pc{\b + \rho + \e}{\eta}  } \eta \left( \frac{x}{x-1} \right)  
=   {}_{3}F_{2} \left( 
\begin{matrix}
\a,  \b\og,  \overline{\varphi}\rho \\
\b,  \rho
\end{matrix}
;
\frac{x}{x-1}
\right).
\end{align*}
This proves the formula.
\end{proof}

\begin{remk}
The formula (i) is originally obtained by Li-Li-Mao \cite[Corollary 3.1]{llm} and  also follows from \cite[Theorem 1.2]{tsb} and \cite[Theorem 3.4]{tripathi}.
\end{remk}

%%%%%%%%%%%%%%%%%%%%%%%%%%% Ito's part1 %%%%%%%%%%%%%%%%%%%%%%%%%%%%%%%%%%%

%%%%%%%%%%%%%%%%%%%%%%%%%%% Ito's part2 %%%%%%%%%%%%%%%%%%%%%%%%%%%%%%%%%%%

Next,  we give an analogous formula of the following transformation formula \cite[Theorem 2.3]{liuwang}:
\begin{align*}
\begin{split}
&\sum_{i,j=0}^{\infty} \Omega(j) \frac{\p{a}{i+j} \p{c}{i} \p{b - c}{j} }{\p{b}{i+j}} \frac{x^{i} y^{j}}{i!j!} \\
&= (1-x)^{b-a-c} \sum_{n=0}^{\infty} \frac{\p{b-a}{n} \p{b - c}{n} }{\p{b}{n} n! } x^{n}
\sum_{j=0}^{n} \Omega (j) \frac{\p{-n}{j} \p{a}{j}  }{j! \p{1+a-b-n}{j}} \left( \frac{y}{x} \right)^{j},
\end{split}
\end{align*}
where $\Omega(j)$ is a complex sequence.

\begin{theo}\label{second trans}
Assume $(\a + \a\g,  \b) = (\g,  \e) =0$.  Let $f\colon\widehat{\kappa^{*}} \rightarrow \mathbb{C}$ be a map.  
For any $x, y \in \kappa$ with $x \neq 0,  1$,  we have
\begin{align*}
&\sum_{\n ,  \m \in \widehat{\kappa^{*}}} f(\m) 
\frac{\p{\a}{\n\m} \p{\g}{\n} \p{\b\og}{\m} }{\pc{\b}{\n\m} \pc{\e}{\n} \pc{\e}{\m} } \n (x) \m (y) \\
&=\oa\og\b (1-x) \sum_{\eta \in \widehat{\kappa^{*}}} \frac{\p{\b\oa + \b\og}{\eta}  }{\pc{\b + \e }{\eta}} \eta (x) 
\sum_{\m \in \widehat{\kappa^{*}}} f(\m) \frac{\p{\a + \oeta}{\m} }{\pc{\a\ol{\b\eta} + \e}{\m}} \m \left(\frac{y}{x} \right)  \\
& - (1-q)^{2}  f(\oa)  \a\ob (-x) \oa (-y) \oa\og\b (1-x) \frac{\pc{\e}{\b}}{\p{\og}{\b}}  
+(1-q)^{2} f(\g\ob)   \og (x) \g\ob (y)  \frac{\pc{\e}{\b}}{\pc{\oa}{\b}}.
\end{align*}
\end{theo}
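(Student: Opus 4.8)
The plan is to follow the proof of Theorem~\ref{first trans} almost verbatim, replacing the Pfaff transformation by the Euler transformation \eqref{euler0}. First I would use the transitivity \eqref{transitivity} to split the Pochhammer symbols indexed by $\n\m$, writing $\p{\a}{\n\m} = \p{\a}{\m}\p{\a\m}{\n}$ and $\pc{\b}{\n\m} = \pc{\b}{\m}\pc{\b\m}{\n}$. Summing over $\n$ first, the inner sum becomes $(1-q)\gh{\a\m,\g}{\b\m}{x}$, so that the left-hand side equals
\[
(1-q)\sum_{\m\in\widehat{\kappa^{*}}} f(\m)\frac{\p{\a+\b\og}{\m}}{\pc{\b+\e}{\m}}\,\m(y)\,\gh{\a\m,\g}{\b\m}{x}.
\]

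The key ingredient is an Euler analogue of Lemma~\ref{pfaff1}: under the hypotheses $(\a+\a\g,\b)=(\g,\e)=0$, for every $\m$ and every $x\neq 1$ one should have
\[
\gh{\a\m,\g}{\b\m}{x} = \oa\og\b(1-x)\,\gh{\oa\b,\og\b\m}{\b\m}{x} + (\text{boundary terms at }\m=\oa,\ \ob\g).
\]
The generic identity is exactly \eqref{euler0} applied to the parameters $(\a\m,\g;\b\m)$, whose pairing hypothesis $(\a\m+\g,\e+\b\m)=0$ fails precisely when $\m=\oa$ or $\m=\ob\g$ (using $(\a,\b)=(\a\g,\b)=(\g,\e)=0$). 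I would treat these two degenerate values separately by \eqref{cancellation}, \eqref{shift}, \eqref{geom} and \eqref{reflection}, just as in Lemma~\ref{pfaff1}. At $\m=\ob\g$ the correction coincides with the one appearing there and will reproduce the $f(\g\ob)$ term; at $\m=\oa$, however, the ``main'' Euler factor $\gh{\oa\b,\og\b\m}{\b\m}{x}$ acquires a repeated parameter ($\oa\b=\b\m$ when $\m=\oa$) and collapses to a ${}_1F_0$, so comparing it with the true value $\gh{\e,\g}{\b\oa}{x}$ produces a genuinely new correction, which is the source of the $f(\oa)$ boundary term.

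Substituting the lemma into the $\m$-sum, the generic part is reorganized as follows. Writing $\p{\og\b\m}{\n}=\p{\og\b}{\m\n}/\p{\og\b}{\m}$ and $\pc{\b\m}{\n}=\pc{\b}{\m\n}/\pc{\b}{\m}$ by \eqref{transitivity}, the $\m$-prefactor simplifies, since $\frac{\p{\a+\b\og}{\m}}{\pc{\b+\e}{\m}}\cdot\frac{\pc{\b}{\m}}{\p{\og\b}{\m}}=\frac{\p{\a}{\m}}{\pc{\e}{\m}}$. Setting $\eta=\m\n$ then separates a clean $\eta$-factor $\frac{\p{\b\oa+\b\og}{\eta}}{\pc{\b+\e}{\eta}}$ together with $\eta(x)$, and \eqref{prelection} converts the residual $\om$-Pochhammer symbols into $\frac{\p{\oeta}{\m}}{\pc{\a\ol{\b\eta}}{\m}}$ (the character values at $-1$ cancelling because $\m(-1)^{2}=1$), producing precisely the stated double sum with argument $\m(y/x)$.

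Finally I would collect the two boundary contributions: inserting $\m=\oa$ and $\m=\ob\g$ into the prefactor $(1-q)f(\m)\frac{\p{\a+\b\og}{\m}}{\pc{\b+\e}{\m}}\m(y)$ together with the lemma's corrections and simplifying by \eqref{reflection}, I would check that they equal the two stated terms. I expect the main obstacle to be the $\m=\oa$ case of the Euler lemma: extracting this new correction from the degenerate ${}_2F_1$ via \eqref{cancellation} and matching the precise sign and the $(-1)$-twists $\a\ob(-x)\,\oa(-y)$ is the delicate computation. By contrast, the $\m=\ob\g$ case and the bulk recombination of the generic term are routine once the proof of Theorem~\ref{first trans} is in hand.
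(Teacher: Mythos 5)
Your proposal follows essentially the same route as the paper: the paper's proof also sums over $\n$ first via \eqref{transitivity}, invokes exactly the Euler analogue you describe (its Lemma \ref{euler1}, with correction terms supported at $\m=\oa$ and $\m=\ob\g$, proved by \eqref{cancellation}, \eqref{shift}, \eqref{geom} and \eqref{reflection} as in Lemma \ref{pfaff1}), and then recombines the generic part by setting $\eta=\n\m$ and applying \eqref{prelection}. Your identification of the two degenerate values of $\m$, of the new $f(\oa)$ correction as the genuinely novel feature relative to Theorem \ref{first trans}, and of the prefactor simplification $\frac{\p{\a+\b\og}{\m}}{\pc{\b+\e}{\m}}\cdot\frac{\pc{\b}{\m}}{\p{\og\b}{\m}}=\frac{\p{\a}{\m}}{\pc{\e}{\m}}$ all match the paper's computation.
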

\begin{proof}
By \eqref{transitivity}, we have  
\begin{align*}
&\sum_{\n, \m \in \widehat{\kappa^{*}} } f(\m) 
\frac{\p{\a}{\n\m} \p{\g}{\n} \p{\b\og}{\m} }{\pc{\b}{\n\m} \pc{\e}{\n} \pc{\e}{\m} } \n (x) \m (y)   \\
&=\sum_{\m \in \widehat{\kappa^{*}} } f(\m ) \frac{\p{\a + \b\og}{\m} }{\pc{\b + \e}{\m} }  \m (y)
\sum_{\n \in \widehat{\kappa^{*}} } \frac{\p{\a\m + \g }{\n}  }{\pc{\b\m + \e }{\n} } \n (x)   \\
&= (1-q) \sum_{\m \in \widehat{\kappa^{*}}} f(\m ) \frac{\p{\a + \b\og}{\m} }{\pc{\b + \e}{\m} } \m (y)
\gh{\a\m,  \g}{\b\m}{x}.   
\end{align*}
By Lemma \ref{euler1} below,   we obtain
\begin{align*}
& (1-q) \sum_{\m \in \widehat{\kappa^{*}}} f(\m ) \frac{\p{\a + \b\og}{\m} }{\pc{\b + \e}{\m} } \m (y)
\gh{\a\m,  \g}{\b\m}{x}\\
&= (1-q)\sum_{\m \in \widehat{\kappa^{*}}} f(\m )  \frac{\p{\a + \b\og}{\m} }{\pc{\b + \e}{\m} } \m (y)
\oa\og\b (1-x) \gh{\oa\b,  \og\b\m}{\b\m}{x} \\
& \hspace{15mm} -q^{-1} (1-q)^{2} f(\oa)\frac{\p{\a + \b\og }{\oa}  }{ \pc{\b + \e}{\oa}} 
\oa (y) \frac{\p{\a\ob}{\g}}{\pc{\e}{\g}} \a\ob (-x) \oa\og\b (1-x)   \\
& \hspace{20mm} +(1-q)^{2} 
f(\ob\g) 
\frac{ \p{\a + \b\og }{\ob\g}   }{ \pc{\b + \e }{\ob\g} }  \ob\g (y) 
\frac{ \pc{\e}{\g} }{ \p{\a\ob}{\g} } \og (-x)  \\
&=  \oa\og\b (1-x) 
\sum_{\n,  \m \in \widehat{\kappa^{*}}} f(\m ) \frac{  \p{\b\og}{\n\m}  \p{\oa\b}{\n}  \p{\a}{\m}   }{\pc{\b}{\n\m}  \pc{\e}{\m} \pc{\e}{\n} } 
\n (x) \m (y)  \\
& \hspace{15mm} - (1-q)^{2} f(\oa) 
\oa (-y) \a\ob (-x) \oa\og\b (1-x)  \frac{\pc{\e}{\b}}{\p{\og}{\b}} \\
& \hspace{20mm} +(1-q)^{2} 
f(\ob\g) \og (x) \ob\g (y) \frac{\pc{\e}{\b}}{\pc{\oa}{\b}}.
\end{align*}
Here we used \eqref{reflection} in the last equality to simplify the second and third terms.  
Finally,  if we put $\n\m = \eta$,  the first term agrees with 
\begin{align*}
& \oa\og\b (1-x) 
\sum_{\eta \in \widehat{\kappa^{*}}} \frac{ \p{\b\og + \b\oa }{\eta}  }{   \pc{\b + \e }{\eta}  } \eta (x) 
\sum_{\m \in \widehat{\kappa^{*}}} f(\m) \frac{   \p{\oa\b\eta}{\om} \p{\a}{\m} }{    \pc{\eta}{\om} \pc{\e}{\m} } \m \left( \frac{y}{x} \right)  \\
&=  \oa\og\b (1-x) 
\sum_{ \eta \in \widehat{\kappa^{*}}} \frac{  \p{\b\og + \b\oa }{\eta}  }{   \pc{\b + \e }{\eta}  } \eta (x) 
\sum_{ \m \in \widehat{\kappa^{*}}  } f(\m) \frac{  \p{\oeta + \a }{\m}  }{ \pc{\a\ol{\b\eta} + \e }{\m}  } \m \left( \frac{y}{x} \right) .
\end{align*}
This proves the theorem.
\end{proof}

\begin{lemm}\label{euler1}
Assume $(\a + \a\g,  \b) = (\g,  \e) =0$.  For  any $\m \in \widehat{\kappa^{*}}$ and $x \in \kappa \backslash \{ 1 \}$,
we have
\begin{align*}
\gh{\a\m,  \g}{\b\m}{x} 
=& \oa\og\b (1-x) \gh{\oa\b,  \og\b\m}{\b\m}{x}  \\
& \hspace{20mm}  +  \delta (\a \m) (1-q^{-1}) \frac{\p{\a\ob}{\g}}{\pc{\e}{\g}} \a\ob(-x) \oa\og\b (1-x)   \\
& \hspace{20mm} + \delta (\og\b\m) (1-q) \frac{\pc{\e}{\g}}{\p{\a\ob}{\g}} \og (-x).
\end{align*}
\end{lemm}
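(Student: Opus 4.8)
The plan is to mimic the proof of Lemma \ref{pfaff1}, with the Euler transformation \eqref{euler0} playing the role that the Pfaff transformation \eqref{pfaff0} played there. First I would dispose of the case $x = 0$, which is clear: the left-hand side vanishes because $F(\bd{\a}, \bd{\b}; 0) = 0$ under the convention $\varphi(0) = 0$, and the two correction terms vanish because they carry the factors $\a\ob(-x)$ and $\og(-x)$ with $\a\ob, \og \neq \e$ (the non-triviality following from $(\a + \a\g, \b) = 0$ and $(\g, \e) = 0$). Hence I may assume $x \neq 0, 1$.

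For the generic characters $\m \neq \oa, \ob\g$ I would apply \eqref{euler0} directly to the parameters $(\a\m, \g; \b\m)$. Its hypothesis $(\a\m + \g, \e + \b\m) = 0$ expands, by bilinearity of the pairing, to $\delta(\a\m) + \delta(\a\ob) + \delta(\g) + \delta(\og\b\m) = 0$: the middle two summands vanish by $(\a + \a\g, \b) = (\g, \e) = 0$, while the outer two vanish precisely because $\m \neq \oa, \ob\g$. Then \eqref{euler0} gives $\gh{\a\m, \g}{\b\m}{x} = \ol{\a\m\g}\,\b\m(1-x)\,\gh{\oa\b, \og\b\m}{\b\m}{x} = \oa\og\b(1-x)\gh{\oa\b, \og\b\m}{\b\m}{x}$, which is exactly the main term; both delta-corrections are zero since $\delta(\a\m) = \delta(\og\b\m) = 0$.

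It remains to handle the two degenerate characters $\m = \oa$ and $\m = \ob\g$, where \eqref{euler0} no longer applies because one of $\delta(\a\m), \delta(\og\b\m)$ equals $1$. In each case I would evaluate the left-hand side and the main term $\oa\og\b(1-x)\gh{\oa\b, \og\b\m}{\b\m}{x}$ separately, using \eqref{cancellation} to strip off a trivial numerator character, the shift formula \eqref{shift} with $\varphi = \a\ob$, the geometric evaluation \eqref{geom}, and the reflection formula \eqref{reflection} to pass between $g$ and $g^{\circ}$, exactly as in Lemma \ref{pfaff1}. When $\m = \oa$ the left-hand side becomes $\gh{\e, \g}{\b\oa}{x} = qF(\g, \b\oa; x) + 1$, while the main term becomes $\oa\og\b(1-x)\gh{\oa\b, \og\b\oa}{\oa\b}{x}$, a sum whose upper character $\oa\b$ collides with its lower character; the difference of these two quantities should reproduce the second correction term (note $\delta(\a\oa) = 1$). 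When $\m = \ob\g$ the character $\og\b\m$ becomes trivial and $\b\m = \g$, so both the left-hand side $\gh{\a\ob\g, \g}{\g}{x}$ and the main term $\oa\og\b(1-x)\gh{\oa\b, \e}{\g}{x}$ simplify through \eqref{cancellation}, and their discrepancy should yield the third correction term.

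I expect the main obstacle to be the bookkeeping in these two degenerate cases, and in particular the asymmetry with Lemma \ref{pfaff1}: there the character $\m = \oa$ required no correction, whereas here it does. The $(1 - q^{-1})$ factor in the second correction term arises precisely from the mismatch between the normalized and un-normalized Pochhammer symbols $\pc{\oa\b}{\n}$ and $\p{\oa\b}{\n}$ when the upper and lower characters of the main ${}_2F_1$ coincide. Carefully tracking these $q^{\delta(\cdot)}$ factors through \eqref{cancellation}, \eqref{shift} and \eqref{reflection}, and confirming that the explicit expressions obtained match the stated coefficients $\frac{\p{\a\ob}{\g}}{\pc{\e}{\g}}$ and $\frac{\pc{\e}{\g}}{\p{\a\ob}{\g}}$, is the delicate part of the computation.
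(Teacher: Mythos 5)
Your proposal follows the paper's proof essentially verbatim: dispose of $x=0$, apply the Euler transformation \eqref{euler0} for the generic characters $\m \neq \oa, \ob\g$ after checking its hypothesis, and evaluate both sides directly via \eqref{cancellation}, \eqref{shift}, \eqref{geom} and \eqref{reflection} in the two degenerate cases (the paper is equally terse there, deferring to ``similar computation to the proof of Lemma \ref{pfaff1}''). Your identification of where the $(1-q^{-1})$ factor comes from — the collision of $\oa\b$ as both upper and lower parameter in the main term when $\m=\oa$ — is correct and is indeed the only genuinely new bookkeeping relative to Lemma \ref{pfaff1}.
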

\begin{proof}
The case when $x=0$ is trivial, hence we may assume $x\neq 0$.
The formula in the case $(\a\m + \g,  \e + \b\m) =0$, that is, $\m \neq  \oa,  \ob\g$ 
follows from \eqref{euler0}. 
If $\m = \oa$ (resp.  $\m = \ob\g$),  one can show that both sides equal
\begin{align*}
&\frac{g(\a\g\ob) g(\oa\b) }{g(\g)}  \a\ob (x) \oa\og\b (1-x) +1 \\
&\left( resp. \hspace{3mm}
\oa\og\b (1-x) + \frac{g(\a\ob)}{g(\a\g\ob) g(\og) }  \og (x) 
\right)
\end{align*}
by similar computation to the proof of Lemma \ref{pfaff1}.
\end{proof}

We have the following analogous formulas of  the reduction formulas \eqref{cor2.10}, \eqref{cor2.11} and \eqref{cor2.13}.  We remark that the function ${}_{3}F_{2} (x)$ in \eqref{cor2.13} reduces to ${}_{2}F_{1}(x)$ in the case of finite fields since the complex parameters $1+b/2$ and $b/2$ agree with the same character.

\begin{cor}\label{secondcor}
\begin{enumerate}
\item Assume $(\a + \a\g,  \b) = (\b + \g,  \e) =0$. For any $x \in \kappa \backslash \{ 0,  1 \}$, we have
\begin{align*}
F_{1:0:0}^{1:1:1} \left( \left.
\begin{matrix}
\a \\
\b
\end{matrix}
; 
\begin{matrix}
\g \\
\hspace{1mm}
\end{matrix}
; 
\begin{matrix}
\b\og \\
\hspace{1mm}
\end{matrix}
\right|
x,x
\right)
= \oa (1-x) + \ob (x) \frac{\pc{\e}{\b}}{\pc{\oa}{\b}}.
\end{align*}

\item Assume $(\a + \a\g,  \b) = (\b + \g,  \e) = (\a + \varphi,  \e + \a\ob) =0$. For any $x \in \kappa \backslash \{ 0,  1 \}$, we have
\begin{align*}
F_{1:0:1}^{1:1:2} \left( \left.
\begin{matrix}
\a \\
\b
\end{matrix}
; 
\begin{matrix}
\g \\
\hspace{1mm}
\end{matrix}
; 
\begin{matrix}
\b\og ,  \varphi \\
\b\varphi
\end{matrix}
\right|
x,x
\right)  
& = \oa\og\b (1-x) \gh{\b\og,  \b\varphi \oa}{\b\varphi}{x} \\
&+ \ob (-x)  \oa\b (1-x) \frac{ \pc{\varphi}{\b} \pc{ \e}{\b} }{\pc{\oa}{\b} \p{ \og}{\b}  }
+\ob (-x) q^{-\delta (\varphi\g\ob)} \frac{\pc{\varphi}{\b} \pc{\e}{\b} }{\pc{\oa}{\b}\p{\ol{\varphi\g}}{\b} }.
\end{align*}

\item Assume $(\a^{2} + \a^{2}\g, \b^{2}) = (\a^{2} + \b^{2}+ \g, \e) =0$. For any $x \in \kappa \backslash \{ 0, 1 \}$, we have  
\begin{align*}
F_{1:0:1}^{1:1:2} \left( \left.
\begin{matrix}
\a^{2} \\
\b^{2}
\end{matrix}
; 
\begin{matrix}
\g \\
\hspace{1mm}
\end{matrix}
; 
\begin{matrix}
\b^{2}\og ,  \a\ob \\
\a\b
\end{matrix}
\right|
x,x
\right)
&= \ol{\a^{2} \g} \b^{2} (1-x)   {}_{2}F_{1} \left( 
\begin{matrix}
\b^{2} \og,  \b\oa \\
\a\b
\end{matrix}
; x
\right)  \\
&+ \ob^{2} (x) \oa^{2}\b^{2}(1-x)\frac{\pc{\a\ob}{\b^{2}} \pc{\e}{\b^{2}} }{\p{\oa^{2}}{\b^{2}} \p{\og}{\b^{2}} }   \\
&+\ob^{2}(x) \frac{\p{\ol{\a\b}}{\b^{2}} \pc{\e}{\b^{2}} }{\pc{\b\ol{\a\g}}{\b^{2}} \pc{\oa^{2}}{\b^{2}} } . 
\end{align*}
\end{enumerate}
\end{cor}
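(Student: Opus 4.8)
The plan is to obtain all three identities as specializations of Theorem~\ref{second trans} with $y = x$, choosing the auxiliary map $f$ so as to install the extra upper and lower $C$-parameters, exactly as Corollary~\ref{firstcor} was deduced from Theorem~\ref{first trans}. The decisive simplification is that after setting $y = x$ the inner $\m$-sum on the right-hand side of Theorem~\ref{second trans}, namely $\sum_{\m} f(\m)\,\p{\a + \oeta}{\m}\,\m(1)/\pc{\a\ol{\b\eta} + \e}{\m}$, becomes a one-variable hypergeometric sum evaluated at the argument $1$, which can be summed in closed form. The remaining $\eta$-sum then recombines, via \eqref{transitivity}, into the hypergeometric function appearing in the statement, while the two correction terms of Theorem~\ref{second trans} (those carrying $f(\oa)$ and $f(\g\ob)$) produce the explicit Pochhammer terms.

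For (i) I would take $f \equiv 1$, so that the left-hand side of Theorem~\ref{second trans} is $(1-q)^2$ times the asserted function. Then the inner sum equals $(1-q)\,\gh{\a, \oeta}{\a\ol{\b\eta}}{1}$, which I evaluate by the Vandermonde formula \eqref{Vande}; this evaluation is valid only for $\eta \neq \ob$, so I isolate that single character. Summing the Vandermonde values over the remaining $\eta$ and simplifying by \eqref{reflection} and \eqref{transitivity} produces a ${}_1F_0$, which \eqref{geom} collapses to the leading term $\oa(1-x)$, the exceptional $\eta = \ob$ piece combining with the $f(\oa)$ correction term of Theorem~\ref{second trans} to complete it. The $f(\g\ob)$ correction term contributes precisely $\ob(x)\,\pc{\e}{\b}/\pc{\oa}{\b}$. (Alternatively, (i) is immediate from Corollary~\ref{firstcor}(i) on setting $y = x$: the inner ${}_2F_1$ then has argument $0$ and vanishes, while the $\delta(x-y)$ term supplies $\oa(1-x)$.)

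Parts (ii) and (iii) run in parallel but with a nontrivial $f$: for (ii) I take $f(\m) = \p{\varphi}{\m}/\pc{\b\varphi}{\m}$, while for (iii) I first substitute $\a \mapsto \a^2$, $\b \mapsto \b^2$ and then take $f(\m) = \p{\a\ob}{\m}/\pc{\a\b}{\m}$. In each case the inner $\m$-sum becomes $(1-q)$ times a ${}_3F_2$ at $1$, and one checks directly that the balancing condition required in \eqref{saalschutz} holds identically, so this ${}_3F_2(1)$ is Saalsch\"utzian. Applying the Saalsch\"utz formula \eqref{saalschutz} writes the inner sum as a sum of \emph{two} Pochhammer products, and the outer $\eta$-sum splits accordingly: one piece recombines (again by \eqref{transitivity}) into the advertised ${}_2F_1$ in $x$, while the other, after \eqref{reflection}, merges with the $f(\oa)$ and $f(\g\ob)$ correction terms to yield the remaining explicit terms --- the factor $q^{-\delta(\varphi\g\ob)}$ in (ii) being the trace of the $\gc{\cdot}$-versus-$g(\cdot)$ discrepancy in the second Saalsch\"utz term. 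For (iii) I would additionally use, as noted before the corollary, that over $\kappa$ the characters attached to $b/2$ and $1 + b/2$ coincide, so that the complex ${}_3F_2$ of \eqref{cor2.13} collapses to the ${}_2F_1$ in the statement.

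The main obstacle I anticipate is not the summation formulas themselves but the careful accounting of the exceptional characters and correction terms. Over $\mathbb{C}$ the analogous sums run over nonnegative integers and the Vandermonde and Saalsch\"utz evaluations apply without exception, whereas over $\kappa$ the formulas \eqref{Vande} and \eqref{saalschutz} exclude finitely many characters, each of which yields a genuine boundary contribution, and \eqref{saalschutz} moreover returns a second term that has no counterpart in the generic complex limit. Matching these exceptional pieces and the two $f$-correction terms of Theorem~\ref{second trans} against the precise Pochhammer expressions in (i)--(iii), using repeatedly \eqref{reflection}, \eqref{transitivity} and the cancellation formula \eqref{cancellation}, is the delicate part of the argument.
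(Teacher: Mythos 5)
Your proposal follows essentially the same route as the paper: specialize Theorem~\ref{second trans} at $y=x$ with $f\equiv 1$, $f(\m)=\p{\varphi}{\m}/\pc{\b\varphi}{\m}$, and $f(\m)=\p{\a\ob}{\m}/\pc{\a\b}{\m}$ respectively, evaluate the inner unit-argument sum by Euler--Gauss/Vandermonde for (i) and by the two-term Saalsch\"utz formula \eqref{saalschutz} for (ii) and (iii), then recombine the $\eta$-sum via \eqref{transitivity} and \eqref{cancellation} and match the boundary and correction terms. The only cosmetic difference is that in (i) the paper applies \eqref{eulergauss} uniformly (valid for all $\eta$ since $\b\neq\e$) and absorbs the exceptional character at the later cancellation step rather than at the Vandermonde step.
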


\begin{proof}
(i) If we apply Theorem \ref{second trans} to $f (\mu) \equiv 1$ and put $x = y$,  we have 
\begin{align*}
&F_{1:0:0}^{1:1:1} \left( \left.
\begin{matrix}
\a \\
\b
\end{matrix}
; 
\begin{matrix}
\g \\
\hspace{1mm}
\end{matrix}
; 
\begin{matrix}
\b\og \\
\hspace{1mm}
\end{matrix}
\right|
x,x
\right) \\
&= \frac{1}{(1-q)^{2}} 
\sum_{\n,  \m \in \widehat{\kappa^{*}}} \frac{\p{\a}{\n\m} \p{\g}{\n}  \p{\b\og}{\m}  }{\pc{\b}{\n\m} \pc{\e}{\n} \pc{\e}{\m} } \n (x) \m (x)  \\
&= \frac{1}{(1-q)^{2}}  \oa\og\b (1-x) \sum_{\eta \in \widehat{\kappa^{*}}} \frac{\p{\b\og + \b\oa}{\eta}  }{\pc{\b + \e }{\eta} } 
\eta (x) \sum_{\m \in \widehat{\kappa^{*}}} \frac{\p{\oeta + \a}{\m}  }{\pc{\a\ol{\b\eta} + \e }{\m} } \m (1) \\
&  \hspace{20mm}-  \ob (-x)  \oa\og\b (1-x)  \frac{\pc{\e}{\b}}{\p{\og}{\b}}  
 + \ob (x)  \frac{\pc{\e}{\b}}{\pc{\oa}{\b}} .
\end{align*}
The first term equals
\begin{align}
\frac{1}{1-q} \oa\og\b (1-x) \sum_{\eta \in \widehat{\kappa^{*}}} \frac{\p{\b\og + \b\oa}{\eta}  }{\pc{\b + \e }{\eta}  } \eta (x) 
\gh{\a,  \oeta}{\a\ol{\b\eta}}{1}.  \label{second2}
\end{align}
Since we assume $\b \neq \e$,  we have,  by \eqref{eulergauss} and \eqref{prelection},  
\begin{align*}
\gh{\a,  \oeta}{\a\ol{\b\eta}}{1}
= \frac{\gc{\a\ol{\b\eta}} g(\ob) }{\gc{\ol{\b\eta}} \gc{\a\ob} } 
= \frac{\pc{\a\ob}{\oeta}}{\pc{\ob}{\oeta}} = \frac{\p{\b}{\eta}}{\p{\oa\b}{\eta}}.
\end{align*}
Therefore,  if we use  \eqref{cancellation} and  \eqref{geom}, we can see that \eqref{second2} equals
\begin{align*}
\oa\og\b (1-x) \gh{\b\og,  \b }{\b}{x} 
&= \oa\og\b (1-x) \left(    {}_{1}F_{0} \left( 
\begin{matrix}
\b\og \\
\hspace{1mm}
\end{matrix}
; x 
\right) + q^{-1} \frac{\p{\b\og}{\ob}}{\pc{\e}{\ob}} \ob (x)  \right) \\
&= \oa (1-x) + \oa\og\b (1-x) \frac{  \pc{\e}{\b}  }{  \p{\og}{\b}  } \ob (-x).
\end{align*}
Here we used $(\b,  \e + \g) =0$.   This proves the formula.

(ii) By applying Theorem \ref{second trans} to $f(\m) = \p{\varphi}{\m} / \pc{\b\varphi}{\m}$ and putting $x=y$,  we have
\begin{align*}
&F_{1:0:1}^{1:1:2} \left( \left.
\begin{matrix}
\a \\
\b
\end{matrix}
; 
\begin{matrix}
\g \\
\hspace{1mm}
\end{matrix}
; 
\begin{matrix}
\b\og ,  \varphi \\
\b\varphi
\end{matrix}
\right|
x,x
\right) \\
&= \frac{1}{1-q} 
\oa\og\b (1-x) \sum_{\eta \in \widehat{\kappa^{*}}} \frac{\p{\b\og + \b\oa}{\eta} }{\pc{\b + \e }{\eta} } \eta (x) 
{}_{3}F_{2} \left( 
\begin{matrix}
\varphi,  \a,  \oeta \\
\b\varphi,  \a\ol{\b\eta}
\end{matrix}
; 1
\right) \\
& \hspace{20mm}  - \ob(-x) \oa\og\b (1-x) \frac{ \p{\varphi}{\oa} \pc{\e}{\b}}{ \pc{\b\varphi}{\oa} \p{\og}{\b}}  
+ \ob(x)  \frac{ \p{\varphi}{\ob\g}  \pc{\e}{\b}}{ \pc{\b\varphi}{\ob\g}  \pc{\oa}{\b}} .
\end{align*}
By \eqref{reflection},  the third term coincides with 
\begin{align*}
\ob (-x) q^{-\delta (\varphi\g\ob)} \frac{\pc{\varphi}{\b} \pc{\e}{\b} }{\pc{\oa}{\b}  \p{\ol{\varphi\g}}{\b} }.
\end{align*}
Note that  the above ${}_{3}F_{2}(1)$ in the first term is Saalsch\"utzian and $\varphi + \a + \oeta \neq  \e + \b\varphi + \a\ol{\b\eta} $
under our assumptions.  
Hence,  by  \eqref{saalschutz},  the value ${}_{3}F_{2} (1)$ is equal to  
\begin{align*}
\frac{\p{\b}{\eta}  \p{\b\varphi\oa}{\eta}  }{\pc{\b\varphi}{\eta} \p{\oa\b}{\eta} }
 + \frac{\gc{\b\varphi} \gc{\a\ob} }{g (\varphi) g(\a)} \frac{\pc{\e}{\eta}}{\p{\oa\b}{\eta}}. 
\end{align*}
Therefore we obtain
\begin{align*}
&\frac{1}{1-q} 
\oa\og\b (1-x) \sum_{\eta \in \widehat{\kappa^{*}}} \frac{\p{\b\og}{\eta} \p{\b\oa}{\eta} }{\pc{\b}{\eta} \pc{\e}{\eta} } \eta (x) 
{}_{3}F_{2} \left( 
\begin{matrix}
\varphi,  \a,  \oeta \\
\b\varphi,  \a\ob\oeta
\end{matrix}
; 1
\right) \\
&= \oa\og\b (1-x) {}_{3}F_{2} \left( 
\begin{matrix}
\b\og,    \b,  \b\varphi\oa  \\
\b,  \b\varphi
\end{matrix}
; x 
\right) 
+ \oa\og\b (1-x)   \frac{\gc{\b\varphi} \gc{\a\ob} }{g (\varphi) g(\a)} 
F\left( \b\og,  \b;  x \right) .
\end{align*}
By \eqref{cancellation},  the first term equals
\begin{align*}
\oa\og\b (1-x) \gh{\b\og,  \b\varphi \oa}{\b\varphi}{x} 
+  \oa \og \b (1-x) q^{-1} \frac{\p{\b\og + \b\varphi\oa}{\ob}  }{\pc{\b\varphi + \e }{\ob} } \ob (x).
\end{align*}
The formulas \eqref{shift} and \eqref{geom} imply that  the second term is equal to
\begin{align*}
&\oa\og\b (1-x)   \frac{\gc{\b\varphi} \gc{\a\ob} }{g (\varphi) g(\a)} 
\frac{\p{\b\og}{\ob}}{\pc{\b}{\ob}} \ob (x)  {}_{1}F_{0} 
\left( 
\begin{matrix}
\og \\
\hspace{1mm}
\end{matrix}
; 
x
\right) \\
&=  \ob (-x)  \oa\b (1-x) \frac{ \pc{\varphi}{\b} \pc{ \e}{\b} }{\pc{\oa}{\b} \p{ \og}{\b}  }.
\end{align*}
Finally,  it is easily verified the equality
\begin{align*}
\oa \og \b (1-x) q^{-1} \frac{\p{\b\og + \b\varphi\oa}{\ob}  }{\pc{\b\varphi + \e }{\ob} } \ob (x) 
&= \ob(-x) \oa\og\b (1-x) \frac{ \p{\varphi}{\oa} \pc{\e}{\b}}{ \pc{\b\varphi}{\oa} \p{\og}{\b}} , 
\end{align*}
hence we obtain the formula.

(iii) If we apply Theorem \ref{second trans} to $f(\m) = \p{\a\ob}{\m} / \pc{\a\b}{\m}$ and put $x=y$, we have 
\begin{align*}
F_{1:0:1}^{1:1:2} \left( \left.
\begin{matrix}
\a^{2} \\
\b^{2}
\end{matrix}
; 
\begin{matrix}
\g \\
\hspace{1mm}
\end{matrix}
; 
\begin{matrix}
\b^{2}\og ,  \a\ob \\
\a\b
\end{matrix}
\right|
x,x
\right)
&=\frac{1}{1-q} \ol{\a^{2}\g} \b^{2} (1-x) \sum_{\eta \in \widehat{\kappa^{*}}} \frac{\p{\b^{2}\oa^{2} + \b^{2}\og }{\eta} }{\pc{\b^{2} +\e}{\eta}} \eta (x) {}_{3}F_{2} \left( 
\begin{matrix}
\a^{2}, \a\ob,  \oeta \\
\a\b, \a^{2} \ol{\b^{2}\eta}
\end{matrix}
; 1
\right)  \\
&- \ob^{2} (x) \ol{\a^{2}\g}\b^{2}(1-x)  \frac{\p{\a\ob}{\oa^{2}} \pc{\e}{\b^{2}} }{\pc{\a\b}{\oa^{2}} \p{\og}{\b^{2}} }  \\
&+ \ob^{2}(x) \frac{\p{\a\ob}{\g\ob^{2}} \pc{\e}{\b^{2}} }{\pc{\a\b}{\g\ob^{2}} \pc{\oa^{2}}{\b^{2}}}.
\end{align*}
By \eqref{reflection}, the second term agrees with 
\begin{align*}
- \ob^{2} (x) \ol{\a^{2}\g}\b^{2}(1-x)  \frac{ \pc{\e}{\b^{2}} }{\p{\og}{\b^{2}} }. 
\end{align*}
Note that the above ${}_{3}F_{2}(1)$ is Saalsch\"utzian and we have $\a^{2} + \a\ob + \oeta \neq  \e + \a\b + \a^{2}\ol{\b^{2}\eta}$ under our assumption, hence, the value ${}_{3}F_{2}(1)$ is equal to 
\begin{align*}
\frac{\p{\b^{2}}{\eta} \p{\oa\b}{\eta} }{\pc{\a\b}{\eta} \p{\oa^{2}\b^{2}}{\eta}} + \frac{\gc{\a\b} \gc{\a^{2}\ob^{2} }}{g(\a^{2}) g(\a\ob)} \frac{\pc{\e}{\eta}}{\p{\oa^{2}\b^{2}}{\eta}}
\end{align*}
by \eqref{saalschutz}. Therefore we obtain 
\begin{align*}
&\frac{1}{1-q} \ol{\a^{2}\g} \b^{2} (1-x) \sum_{\eta \in \widehat{\kappa^{*}}} \frac{\p{\b^{2}\oa^{2} + \b^{2}\og }{\eta} }{\pc{\b^{2} +\e}{\eta}} \eta (x) {}_{3}F_{2} \left( 
\begin{matrix}
\a^{2}, \a\ob,  \oeta \\
\a\b, \a^{2} \ol{\b^{2}\eta}
\end{matrix}
; 1
\right) \\
&= \ol{\a^{2}\g} \b^{2} (1-x) {}_{3}F_{2} \left( 
\begin{matrix}
\b^{2}\og, \b^{2}, \b\oa \\
\b^{2}, \a\b
\end{matrix}
; x
\right) 
+ \ol{\a^{2}\g} \b^{2} (1-x) \frac{\gc{\a\b} \gc{\a^{2}\ob^{2} }}{g(\a^{2}) g(\a\ob)} F\left( \b^{2}\og, \b^{2}; x \right) \\
&=  \ol{\a^{2}\g} \b^{2} (1-x) {}_{2}F_{1} \left( 
\begin{matrix}
\b^{2}\og,  \b\oa \\
\a\b
\end{matrix}
; x
\right)  +  \ol{\a^{2}\g} \b^{2} (1-x) q^{-1} \frac{\p{\b^{2}\og + \b\oa}{\ob^{2}}}{\pc{\e + \a\b}{\ob^{2}}} \ob^{2}(x) \\
&+ \ol{\a^{2}\g} \b^{2} (1-x) \frac{\gc{\a\b} \gc{\a^{2}\ob^{2} }}{g(\a^{2}) g(\a\ob)} F\left( \b^{2}\og, \b^{2}; x \right). \\
\end{align*}
Here we used \eqref{cancellation} in the last equality. 
If we use \eqref{reflection} (resp. \eqref{shift} and \eqref{geom}), one can show that the second (resp. third) term agrees with 
\begin{align*}
& \ob^{2}(x) \ol{\a^{2}\g} \b^{2} (1-x)  \frac{\pc{\e}{\b^{2}}}{\p{\og}{\b^{2}}}. \\
&\left( resp. \hspace{1mm}\ob^{2}(x) \oa^{2}\b^{2}(1-x) \frac{\pc{\a\ob}{\b^{2}} \pc{\e}{\b^{2}} }{\p{\oa^{2}}{\b^{2}} \p{\og}{\b^{2}}}. \right)
\end{align*}
Therefore we obtain the formula.
\end{proof}

\begin{remk}
Corollary \ref{secondcor} (i) can also be derived from  Corollary \ref{firstcor} (i). 
\end{remk}

By specializing parameters and arguments in the formulas above, we have the following summation formulas, which are finite field analogues of \cite[Corollary 5.1]{liuwang}.

\begin{cor} 
Assume $p \neq 2$.
\begin{enumerate}
\item Suppose that $(\a^{2} + \a^{2}\g^{2}, \b^{2}) = (\a^2+\b^2 + \g^{2}+ \phi\a\overline{\b\g},\e)=0$. Then, we have
\begin{align*}
&\kF{1:1:2}{1:0:1}{\a^2}{\b^2}{\g^2}{}{\b^2\og^2,\phi\a\overline{\b\g}}{\phi\a\b\og}{-1,-1}
=\overline{\a\g}\b(4)\sum_{\chi^2=\e}\dfrac{\gc{\phi\a\b\og}g(\b\og\chi)}{g(\b^2\og^2)g(\phi\a\chi)}\\
&+\oa\b(4)\dfrac{\pc{\phi\a\overline{\b\g}}{\b^2} \pc{\e}{\b^{2}} }{\pc{\oa^2}{\b^2}\p{\og^2}{\b^2}}+q^{-\delta(\phi\a\ob^3\g)}\dfrac{\pc{\phi\a\overline{\b\g}}{\b^{2}}\pc{\e}{\b^2}}{\pc{\oa^2}{\b^2}\p{\phi\ol{\a\g}\b}{\b^2}}.
\end{align*}

\item Suppose that $(\a +\a\g^{2}, \b^{2}) = (\a+\b^2+\g^{2} + \b^2\og^2,\e)=(\overline{\a\g^2},\e+\a\ob^2)=0$. Then,
we have
\begin{align*}
&\kF{1:1:2}{1:0:1}{\a}{\b^2}{\g^2}{}{\b^2\og^2,\ol{\a\g^2}}{\ol{\a\g^{2}}\b^2}{\dfrac{1}{2},\dfrac{1}{2}}
=\a\ob^2\g^2(2)\sum_{\chi^2=\e} \dfrac{\gc{\b^2 \ol{\a\g^{2}}} g(\phi)}{g(\phi\b\og\chi) g(\ol{\a\g}\b\chi)} \\
&+\a(2)\frac{ \pc{\ol{\a\g^2}}{\b^{2}} \pc{\e}{\b^{2}} }{ \pc{\oa}{\b^{2}} \p{\og^2}{\b^2} }+\b(4)q^{-\delta(\a\b^2)}\dfrac{ \pc{\ol{\a\g^2}}{\b^{2}} \pc{\e}{\b^2} }{ \pc{\oa}{\b^{2}} \p{\a}{\b^2} }.
\end{align*}

\item Suppose that $(\a^{2} + \a^{2}\g^{2}, \b^{2}) = (\a^2+\b^2+ \g^{2} + \b^{2} \og^{2} + \a^{2}\g^{2} \ob^{4}+ \a^2\g^4\ob^4,\e)=0$. Then, we have
\begin{align*}
&\kF{1:1:2}{1:0:1}{\a^2}{\b^2}{\g^2}{}{\b^2\og^2,\a^2\g^2\ob^4}{\a^2\g^2\ob^2}{\dfrac{1}{2},\dfrac{1}{2}}
=\a\ob\g(4)\sum_{\chi^2=\e}\dfrac{\gc{\a\g\ob}\gc{\phi\a\g\ob}}{g(\phi\a\chi) g(\a\g^2\ob^2\chi)}\\
&+\a(4)\dfrac{\pc{\a^2\g^2\ob^4}{\b^{2}}\pc{\e}{\b^2}}{\pc{\oa^2}{\b^2}\p{\og^2}{\b^2}}
+\b(4)q^{-\delta(\a^2\g^4\ob^6)} \dfrac{\pc{\a^2\g^2\ob^4}{\b^{2}}\pc{\e}{\b^2}}{\pc{\oa^2}{\b^{2}}\p{\ol{\a^{2}\g^{4}}\b^4}{\b^2}}.
\end{align*}
\end{enumerate}
\end{cor}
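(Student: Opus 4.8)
The plan is to obtain all three identities from Corollary \ref{secondcor} (ii) by substituting squared characters for the parameters, specializing the argument $x$, and then evaluating the resulting Gauss-sum ${}_{2}F_{1}$ by one of the summation formulas of \S 2. Recall that Corollary \ref{secondcor} (ii) expresses $F_{1:0:1}^{1:1:2}(x,x)$ as $\oa\og\b(1-x)\,\gh{\b\og,\b\varphi\oa}{\b\varphi}{x}$ plus two correction terms built from Pochhammer quotients. For each part I would choose the substitution so that the left-hand side becomes the stated Kamp\'e de F\'eriet function while the inner ${}_{2}F_{1}$ acquires exactly the shape demanded by \eqref{kummer}, \eqref{gsecond}, or \eqref{bailey}. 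The hypothesis $p\neq 2$ enters precisely because these three summation formulas involve the quadratic character $\phi$ and sums over $\chi^{2}=\e$.

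\emph{The three specializations.} For (i), replace $(\a,\b,\g,\varphi)$ by $(\a^{2},\b^{2},\g^{2},\phi\a\ol{\b\g})$ and set $x=-1$. Then $\b\varphi=\phi\a\b\og$ and $\b\varphi\oa^{2}=\phi\oa\b\og$, while $\b^{2}\og^{2}=(\b\og)^{2}$ is a square, so the inner series is $\gh{\b^{2}\og^{2},\phi\oa\b\og}{\phi\a\b\og}{-1}$, whose lower parameter equals the first upper parameter times the inverse of the second; this is exactly the Kummer shape, so \eqref{kummer} produces the displayed $\sum_{\chi^{2}=\e}$. For (ii), replace $(\a,\b,\g,\varphi)$ by $(\a,\b^{2},\g^{2},\ol{\a\g^{2}})$ and set $x=\tfrac12$; the inner series becomes $\gh{\b^{2}\og^{2},\oa^{2}\b^{2}\og^{2}}{\ol{\a\g^{2}}\b^{2}}{\tfrac12}$, in which both upper parameters are squares and the lower is the product of their square roots, so \eqref{gsecond} applies. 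For (iii), replace $(\a,\b,\g,\varphi)$ by $(\a^{2},\b^{2},\g^{2},\a^{2}\g^{2}\ob^{4})$ and set $x=\tfrac12$; the inner series becomes $\gh{\b^{2}\og^{2},\g^{2}\ob^{2}}{\a^{2}\g^{2}\ob^{2}}{\tfrac12}$, whose two upper parameters are mutually inverse squares with square lower parameter, so \eqref{bailey} applies.

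\emph{Bookkeeping and the main obstacle.} It then remains to simplify the prefactor and the two correction terms. The factor $\oa\og\b(1-x)$ collapses to $\overline{\a\g}\b(4)$ at $x=-1$ (the $1-x=2$ being squared along with the parameters, via $\chi(2)^{2}=\chi(4)$), and to the stated $(2)$- or $(4)$-valued factors at $x=\tfrac12$ (via $\chi(\tfrac12)=\ol{\chi}(2)$); the two correction terms of Corollary \ref{secondcor} (ii) simplify by \eqref{reflection} to the second and third summands of each identity. The step I expect to be most delicate is the verification of hypotheses: one must check that the conditions attached to Corollary \ref{secondcor} (ii), namely $(\a+\a\g,\b)=(\b+\g,\e)=(\a+\varphi,\e+\a\ob)=0$, together with the nondegeneracy hypotheses of the summation formula invoked (the condition $(\a^{2}+\b^{2}+\a\ob,\e)=0$ before \eqref{gsecond}, the condition $(\a^{2}+\a^{2}\g^{2}+\a^{2}\og^{2},\e)=0$ before \eqref{bailey}, and the implicit nondegeneracy for \eqref{kummer}), collapse under each substitution to exactly the stated hypotheses of (i)--(iii). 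One must also carry the $q^{-\delta(\cdots)}$ factors through the Pochhammer arithmetic so that they land on the correct summand, which is the most error-prone part of the character bookkeeping.
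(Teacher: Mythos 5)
Your proposal is correct and follows exactly the paper's own (one-line) proof: each part is obtained from Corollary \ref{secondcor} (ii) by the substitutions you describe, with the inner ${}_{2}F_{1}$ evaluated by \eqref{kummer}, \eqref{gsecond}, or \eqref{bailey} respectively. The substitutions, specializations of $x$, and the bookkeeping you outline (e.g.\ $\oa\og\b(1-x)\mapsto\overline{\a\g}\b(4)$ at $x=-1$, and the collapse of $\ob^{2}(-1)$ to $1$) all check out against the stated identities.
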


\begin{proof}
Part (i) (resp. (ii), (iii)) follows from Corollary \ref{secondcor} (ii) and \eqref{kummer} (resp. \eqref{gsecond}, \eqref{bailey}).

%\begin{enumerate}
%\item Using Corollary \ref{secondcor} (ii) and \eqref{kummer}, we obtain (i).
%
%\item Using Corollary \ref{secondcor} (ii) and \eqref{gsecond}, we obtain (ii).
%
%\item Using Corollary \ref{secondcor} (ii) and \eqref{bailey}, we complete the proof.
%\end{enumerate}
\end{proof}

%%%%%%%%%%%%%%%%%%%%%%%%%%% Ito's part2 %%%%%%%%%%%%%%%%%%%%%%%%%%%%%%%%%%%

\subsection{Formulas for $F_{1:0:C^{\prime}}^{0:2:C}$}

%%%%%%%%%%%%%%%%%%%%%%%%%%% Nemoto's part %%%%%%%%%%%%%%%%%%%%%%%%%%%%%%%%

First, we give a finite field analogue of \cite[Theorem .3.1]{liuwang}:
\begin{align*}
\begin{split}
&\sum_{i,j=0}^{\infty} \Omega(j) \frac{\p{a}{i} \p{c}{i} \p{b - a}{j}\p{b - c}{j} }{\p{b}{i+j}} \frac{x^{i} y^{j}}{i!j!} \\
&= (1-x)^{b-a-c} \sum_{n=0}^{\infty} \frac{\p{b-a}{n} \p{b - c}{n} }{\p{b}{n} n! } x^{n}
\sum_{j=0}^{n} \Omega (j) \frac{\p{-n}{j}  }{j! } \left( \frac{y(x-1)}{x} \right)^{j},
\end{split}
\end{align*}
where $\Omega (j)$ is a complex sequence.

%\begin{align*}
%{}_{2}F_{1} \left( 
%\begin{matrix}
%\a, \g \\
%\a
%\end{matrix}
%;
%x 
%\right) 
%&={}_{1}F_{0} \left( 
%\begin{matrix}
%\g \\
%-
%\end{matrix}
%;
%x 
%\right) 
%+ q^{-1} \cfrac{(\g)_{\oa}}{(\e)^{\circ}_{\oa}} \oa(x) \\
%&= \og(1-x)+\cfrac1q \cdot \cfrac{g^{\circ}(\e)}{g^{\circ}\oa)} \cdot \cfrac{g(\g\oa)}{g(\g)} \oa(x) \\
%&= \og(1-x)+ \cfrac{g(\g\oa)}{g^{\circ}(\oa)g(\g)} \oa(x). 
%\end{align*}

%On the other hand, 
%RHS is 
%\begin{align*}
%&\overline{\a\g}\beta \a\ob(1-x)  {}_{2}F_{1} \left( 
%\begin{matrix}
%\e, \g\a\\
%\a
%\end{matrix}
%;
%x 
%\right) 
%+(1-q)\cfrac{g(\g\oa)}{g(\g)g^{\circ}(\oa)} \oa(x) \\
%&=\g(1-x)(qF(\og \a, \a ; x)+1))+(1-q)\cfrac{g(\g\oa)}{g(\g)g^{\circ}(\oa)} \oa(x).
%\end{align*}

%The former part is transformed as follows. 
%\begin{align*}
%\og(1-x)\left(q\cfrac{(\og \a)_{\oa}}{(\a)^{\circ}_{\oa}} \oa(x) {}_{1}F_{0} \left( 
%\begin{matrix}
%\og \\
%-
%\end{matrix}
%;
%x 
%\right) 
%+1
%\right) 
%&=
%q\cfrac{(\og \a)_{\oa}}{(\a)^{\circ}_{\oa}} \oa(x) + \og(1-x) \\
%&= 
%q \cfrac{g(\g)}{g(\og\a)} \cfrac{g^{\circ}(\a)}{g^{\circ}(\e)} + \og(1-x) \\
%&=\cfrac{\oa(-1)q \cdot \oa(-1)q}{g^{\circ}(\g)g(\oa)} \cdot \cfrac{g^{\circ}(\g\oa)}{\og \a(-1)q} \oa(x)+ \og(1-x) \\
%&=q \cfrac{g^{\circ}(\g \oa)}{g^{\circ}(\g)g(\oa)} \oa(x)+\og(1-x). 
%\end{align*}
%By summing up the latter part, we have the theorem. 
%By same calculation, we have the theorem in case of $\mu= \g\ob$.
%\end{proof}

\begin{theo} \label{thirdtrans}
Assume $(\a+\g+\oa\g, \e)=0$. Let  $f \colon \widehat{\kappa^{*}} \rightarrow \mathbb{C}$ be a map. 
For any $x, y \in \kappa$ with $x \neq 0, 1$,  we have
\begin{align*}
&\sum_{\n, \m \in \widehat{\kappa^{*}}}f(\mu)
\cfrac{ \p{\a + \g}{\n}  \p{ \b\oa+\b\og}{\m} \n(x)\m(y)}{(\b)^{\circ}_{\m\n}(\e)^{\circ}_{\n}(\e)^{\circ}_{\m}} \\
&=
\b\overline{\a\g}(1-x)\sum_{\eta \in \widehat{\kappa^{*}}} \cfrac{\p{\b\oa + \b\og}{\eta}}{\pc{\b + \e}{\eta}} \eta(x)
\sum_{\m \in \widehat{\kappa^{*}}}f(\m) \cfrac{ \p{\oeta}{\m} }{  \pc{\e}{\m} }  \m\left(\cfrac{y(x-1)}{x}\right) \\
&+(1-q)^2 \ob(-y)
\left( 
f(\a\ob) \cfrac{\pc{\e}{\b}}{\p{\og}{\b}} \a\left(-\cfrac{y}{x} \right)
+f(\g\ob) \cfrac{\pc{\e}{\b}}{\p{\oa}{\b}}  \g\left(-\cfrac{y}{x}\right)
\right).
\end{align*}
\end{theo}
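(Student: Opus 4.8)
The plan is to follow the proof of Theorem~\ref{second trans} almost verbatim, the only structural change being that here $\n$ and $\m$ are coupled through the denominator $\pc{\b}{\m\n}$ rather than through a numerator. First I would apply the transitivity formula \eqref{transitivity} in the form $\pc{\b}{\m\n}=\pc{\b}{\m}\pc{\b\m}{\n}$, which detaches the inner sum over $\n$. Because the numerator $\p{\a+\g}{\n}=\p{\a}{\n}\p{\g}{\n}$ depends only on $\n$, this inner sum is precisely $(1-q)\,\gh{\a,\g}{\b\m}{x}$, so the left-hand side becomes
\[
(1-q)\sum_{\m\in\widehat{\kappa^{*}}}f(\m)\frac{\p{\b\oa+\b\og}{\m}}{\pc{\b}{\m}\pc{\e}{\m}}\m(y)\,\gh{\a,\g}{\b\m}{x}.
\]

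Next I would transform this inner ${}_2F_1$ by an Euler-type formula, proved exactly as Lemma~\ref{euler1}. The generic identity \eqref{euler0} gives $\gh{\a,\g}{\b\m}{x}=\overline{\a\g}\b\m(1-x)\,\gh{\oa\b\m,\og\b\m}{\b\m}{x}$ whenever $(\a+\g,\e+\b\m)=0$; under the hypothesis $(\a+\g+\oa\g,\e)=0$ this holds for every $\m$ except the two values $\m=\a\ob$ and $\m=\g\ob$, at which $\b\m$ collides with $\a$ or $\g$ respectively. At those two characters I would evaluate the sum directly through \eqref{cancellation}, \eqref{shift} and \eqref{geom}, producing correction terms supported on $\delta(\a\ob\om)$ and $\delta(\g\ob\om)$. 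Establishing this modified Euler lemma is the main obstacle; however, it is entirely parallel to the degenerate-case computations already carried out in Lemmas~\ref{pfaff1} and \ref{euler1}.

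For the main term I would insert the defining series of $\gh{\oa\b\m,\og\b\m}{\b\m}{x}$, recombine the $\m$- and $\n$-indexed Pochhammer symbols by \eqref{transitivity} (via $\p{\b\oa}{\m}\p{\oa\b\m}{\n}=\p{\b\oa}{\m\n}$, its counterpart for $\b\og$, and $\pc{\b}{\m}\pc{\b\m}{\n}=\pc{\b}{\m\n}$), and then set $\eta=\m\n$, so that $\n=\om\eta$. The only delicate point is the factor $\pc{\e}{\om\eta}$: writing $\pc{\e}{\om\eta}=\pc{\e}{\eta}\pc{\eta}{\om}$ by \eqref{transitivity} and applying \eqref{prelection} in the form $\p{\oeta}{\m}\pc{\eta}{\om}=\m(-1)$ replaces it by $\p{\oeta}{\m}$ and, crucially, releases the factor $\m(-1)$. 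This $\m(-1)$ converts $\m\bigl(y(1-x)/x\bigr)$ into $\m\bigl(y(x-1)/x\bigr)$, while the denominators assemble into $\pc{\b+\e}{\eta}$; the result is exactly the double sum $\b\overline{\a\g}(1-x)\sum_{\eta}\cdots\sum_{\m}\cdots$ on the right.

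Finally I would simplify the two degenerate contributions. The characters $\m=\a\ob$ and $\m=\g\ob$ carry the outer weights $f(\a\ob)$ and $f(\g\ob)$ together with the prefactors coming from $\p{\b\oa+\b\og}{\m}/(\pc{\b}{\m}\pc{\e}{\m})\cdot\m(y)$ and from the lemma's correction; clearing the resulting Gauss-sum quotients with the reflection formula \eqref{reflection} should collapse them to $(1-q)^2\ob(-y)f(\a\ob)\pc{\e}{\b}\a(-y/x)/\p{\og}{\b}$ and $(1-q)^2\ob(-y)f(\g\ob)\pc{\e}{\b}\g(-y/x)/\p{\oa}{\b}$, matching the stated right-hand side. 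Once the modified Euler lemma is in hand, everything after it is the same reindex-and-simplify bookkeeping used for Theorems~\ref{first trans} and \ref{second trans}.
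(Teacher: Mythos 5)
Your proposal follows the paper's proof essentially step for step: separate the $\nu$-sum via transitivity into a ${}_2F_1(\a,\g;\b\m;x)$, apply the modified Euler transformation with correction terms at $\m=\a\ob$ and $\m=\g\ob$ (this is exactly the paper's Lemma \ref{euler2}, proved just as you indicate by the same degenerate-case computations as Lemmas \ref{pfaff1} and \ref{euler1}), reindex by $\eta=\m\n$ using \eqref{prelection} to produce $\p{\oeta}{\m}$ and the sign $\m(-1)$ that turns $y(1-x)$ into $y(x-1)$, and simplify the two exceptional contributions with \eqref{reflection}. The approach and all key identities match; no gaps.
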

\begin{proof}
We have 
\begin{align*}
&\sum_{\n, \m \in \widehat{\kappa^{*}}}f(\mu)
\cfrac{ \p{\a + \g}{\n}  \p{ \b\oa+\b\og}{\m}  \n(x)\m(y)}{(\b)^{\circ}_{\m\n}(\e)^{\circ}_{\n}(\e)^{\circ}_{\m}}  \\
&=(1-q) \sum_{\m \in \widehat{\kappa^{*}}}f(\m)
\cfrac{  \p{\b\oa + \b\og}{\m}  }{  \pc{\b + \e}{\m}  } \m(y) 
{}_{2}F_{1} \left( 
\begin{matrix}
\a, \g \\
\b\m
\end{matrix}
;
x 
\right). \\
\end{align*}
By Lemma \ref{euler2} below, the right hand side is equal to  
\begin{align*}
&(1-q) \sum_{\m \in \widehat{\kappa^{*}} }f(\m) \cfrac{  \p{\b\oa + \b\og}{\m}  }{  \pc{\b + \e}{\m}  }  \m(y) 
\overline{\a\g}\b\m(1-x)
{}_2F_1\left( 
\begin{matrix}
\oa\b\m, \og\b\m \\
\b\m
\end{matrix}
;
x 
\right) \\
&+
(1-q)^2 \sum_{\m \in \widehat{\kappa^{*}}   }f(\m)\cfrac{  \p{\b\oa + \b\og}{\m}  }{  \pc{\b + \e}{\m}  }  \m(y) 
\left(\delta(\oa\b\m)\cfrac{g(\oa\g)}{g(\g) g(\oa) } \oa(x)  \right. \\
& \hspace{70mm}   \left. +\delta(\og\b\m) \cfrac{ g(\a\og) }{ g(\a) g(\og)}\og(x) \right).
\end{align*}
The first term is computed as follows:
\begin{align*}
&\overline{\a\g}\b(1-x)
\sum_{\m \in \widehat{\kappa^{*}} }f(\m)  \cfrac{  \p{\b\oa + \b\og}{\m} }{ \pc{\b + \e}{\m} }
\sum_{\n \in \widehat{\kappa^{*}} } \cfrac{  \p{\oa\b\m + \og\b\m}{\n} }{ \pc{\b\m + \e}{\n} }
\n(x)\m(1-x)\m(y) \\
&= 
\overline{\a\g}\b(1-x)
\sum_{\n, \m \in \widehat{\kappa^{*}} }f(\m)\cfrac{   \p{\b\oa + \b\og}{\m\n} }{ \pc{\b}{\m\n} }
\cfrac{\n(x)\m(y(1-x))}{ \pc{\e}{\m}  \pc{\e}{\n} }.
\end{align*}
By substituting $\eta\om$ for $\n$ and using the formulas \eqref{prelection} and \eqref{transitivity}, we obtain 
\begin{align*}
%&\overline{\a\g}\b(1-x)
% \sum_{\m, \eta}f(\m)\cfrac{(\b\oa)_{\eta}(\b\og)_{\eta}}{(\b)^{\circ}_{\eta}}
% \cfrac{\eta\om(x)\m(y(1-x))}{(\e)^{\circ}_{\m}(\e)^{\circ}_{\eta\om}} \\
%&=
&\overline{\a\g}\b(1-x)
\sum_{\eta \in \widehat{\kappa^{*}}} \cfrac{ \p{\b\oa + \b\og}{\eta} }{\pc{\b + \e}{\eta}}
\eta(x)
\sum_{\m \in \widehat{\kappa^{*}}}
f(\m)   \cfrac{1}{   \pc{\e}{\m}  \pc{\eta}{\om}}
\m\left(\cfrac{y(1-x)}{x}\right) \\
&=  \overline{\a\g}\b(1-x)
\sum_{\eta \in \widehat{\kappa^{*}}}\cfrac{ \p{\b\oa + \b\og}{\eta} }{\pc{\b + \e}{\eta}}  \eta(x)
\sum_{\m \in \widehat{\kappa^{*}}}  f(\m)  \cfrac{\p{\oeta}{\m}}{\pc{\e}{\m}} \m\left(\cfrac{y(x-1)}{x}\right).
 \end{align*}
On the other hand, the second term is computed as follows:
\begin{align*}
%&\overline{\a\g}\b(1-x)
% \sum_{\m}f(\m)\cfrac{(\b\oa)_{\m}(\b\og)_{\m}}{ \pc{\b}{\m} \pc{\e}{\m}}\m(y) 
 %\left( (1-q) \delta(\oa\b\m)\cfrac{g(\g\oa)}{g(\g)g(\oa)} \oa(x) \right)\\
 %&=
& (1-q)^2 f(\a\ob)
\cfrac{\p{\b\oa + \b\og}{\a\ob} }{\pc{\b + \e}{\a\ob}} \a\ob(y) \cdot \cfrac{g(\g\oa)}{g(\g) g(\oa)}
\oa(x) \\
&=
(1-q)^2 f(\a\ob)\cfrac{g(\e)g(\a\og)\gc{\b}\gc{\e}g(\g\oa)}{g(\b\oa)g(\b\og)\gc{\a}\gc{\a\ob}g(\g)g(\oa)}
 \ob(y) \a\left(\cfrac{y}{x} \right) \\
 &=(1-q)^2 f(\a\ob) \cfrac{\pc{\e}{\b}}{\p{\og}{\b}} \ob(-y) \a\left(-\cfrac{y}{x}\right). 
\end{align*}
%% The third term is same, hence we obtain the theorem.
By the symmetry of $\a$ and $\g$, the third term equals
\begin{align*}
(1-q)^2 f(\g\ob) \cfrac{\pc{\e}{\b}}{\p{\oa}{\b}} \ob(-y) \g\left(-\cfrac{y}{x}\right),
\end{align*}  
hence we obtain the theorem.
\end{proof}

\begin{lemm} \label{euler2}
Assume $(\a+\g+\overline{\a}\g, \e)=0$. Then for any $\mu \in \widehat{\kappa^{*}}$ and $x \in \kappa \backslash \{ 1 \}$, we have
\begin{align*}
{}_{2}F_{1} \left( 
\begin{matrix}
\a, \g \\
\b\m
\end{matrix}
;
x 
\right) 
&=\overline{\a\g}\b\mu(1-x)
{}_{2}F_{1} \left( 
\begin{matrix}
\oa\b\mu,  \og\b\mu \\
\b\mu
\end{matrix}
;
x
\right) \\
&+(1-q)\left(\delta(\oa\b\m)\cfrac{g(\oa\g)}{g(\g)g(\oa)} \oa(x)+\delta(\og\b\m)\cfrac{g(\a\og)}{g(\a)g(\og)}\og(x) \right).
\end{align*}
\end{lemm}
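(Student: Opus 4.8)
The plan is to mirror the proof of Lemma~\ref{pfaff1}, replacing the Pfaff transformation \eqref{pfaff0} by the Euler transformation \eqref{euler0}. First I would dispose of the case $x=0$, where both sides vanish because $\n(0)=0$ for every $\n\in\widehat{\kappa^{*}}$ (and each $\delta$-term carries a factor $\oa(x)$ or $\og(x)$); so from now on assume $x\neq 0,1$. The Euler transformation \eqref{euler0} applies to $\gh{\a,\g}{\b\m}{x}$ exactly when $(\a+\g,\e+\b\m)=0$. Under the standing hypothesis $(\a+\g+\oa\g,\e)=0$, which forces $\a,\g\neq\e$ and $\a\neq\g$, this pairing fails precisely when $\b\m=\a$ or $\b\m=\g$, i.e. when $\oa\b\m=\e$ or $\og\b\m=\e$. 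In the generic case $\m\neq\a\ob,\g\ob$ both $\delta$-terms on the right vanish, and \eqref{euler0} delivers the identity on the nose, since $\ol{\a\g}\b\m(1-x)\gh{\oa\b\m,\og\b\m}{\b\m}{x}$ is exactly the right-hand side of \eqref{euler0}.

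It then remains to treat the two degenerate characters. Here I would exploit the symmetry $\a\leftrightarrow\g$: the left-hand side $\gh{\a,\g}{\b\m}{x}$, the hypothesis $(\a+\g+\oa\g,\e)=0$, the main term, and the pair of correction terms are all invariant under this swap (the two $\delta$-terms simply being interchanged, using $g(\og\a)=g(\a\og)$). Hence it suffices to handle $\m=\a\ob$, i.e. $\b\m=\a$, and the case $\m=\g\ob$ follows formally. For the left-hand side $\gh{\a,\g}{\a}{x}$, the coincidence of a numerator character $\a$ with the denominator character $\a$ lets me apply the cancellation formula \eqref{cancellation} with $\bd{\g}=\a$; the resulting sum collapses to the single term $\n=\a$, and together with \eqref{geom} this gives $\gh{\a,\g}{\a}{x}=\og(1-x)+\frac{g(\oa\g)}{g(\g)g(\oa)}\oa(x)$.

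For the right-hand side the main term becomes $\og(1-x)\gh{\e,\a\og}{\a}{x}$, and the trivial numerator character yields the elementary reduction $\gh{\e,\a\og}{\a}{x}=qF(\a\og,\a;x)+1$ already used in Lemma~\ref{pfaff1}. I would evaluate $F(\a\og,\a;x)$ by the shift formula \eqref{shift} with $\varphi=\oa$ followed by \eqref{geom}, and collapse the product of the two linear factors via $\og\g(1-x)=\e(1-x)=1$ (valid since $x\neq 1$). Adding the surviving correction term $(1-q)\frac{g(\oa\g)}{g(\g)g(\oa)}\oa(x)$ and comparing with the left-hand side reduces the whole claim to the single Gauss-sum identity $g(\og)g(\a)g(\g)g(\oa)=q\,g(\oa\g)g(\a\og)$, which is immediate from the reflection formula \eqref{reflection} applied to each conjugate pair (the signs $\a(-1)$ and $\g(-1)$ appearing on both sides cancel). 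This last piece of bookkeeping---correctly tracking the $(1-q)$ and $q^{\pm1}$ normalisations, the signs coming from \eqref{reflection}, and the $\e(1-x)=1$ simplifications---is the only delicate point; the remainder is a routine transcription of the argument for Lemma~\ref{pfaff1}.
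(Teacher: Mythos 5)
Your proposal is correct and follows essentially the same route as the paper: dispose of $x=0$, apply the Euler transformation \eqref{euler0} when $(\a+\g,\e+\b\m)=0$, and compute both sides explicitly via \eqref{cancellation}, \eqref{shift}, \eqref{geom} and \eqref{reflection} in the two exceptional cases $\b\m=\a$ and $\b\m=\g$, where both sides indeed equal $\og(1-x)+\frac{g(\oa\g)}{g(\g)g(\oa)}\oa(x)$ (resp.\ its $\a\leftrightarrow\g$ image). Your use of the $\a\leftrightarrow\g$ symmetry to reduce to a single exceptional case is a minor economy over the paper's ``resp.''-style treatment, and your final Gauss-sum identity checks out via \eqref{reflection}.
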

\begin{proof}
It is obvious when $x=0$, thus we may assume $x \neq 0$. 
The formula of the case $(\a + \g, \e + \b\m) =0$, that is, $\m \neq \a\ob, \g\ob$ follows from \eqref{euler0}. 
If $\m = \a \ob$ (resp. $\m=\g\ob$),  both sides coincide with 
\begin{align*}
\og(1-x)+ \cfrac{g(\g\oa)}{g(\oa)g(\g)} \oa(x)\,\,\,\,\,\,
\left(resp. \,\,\,\oa(1-x)+ \cfrac{g(\a\og)}{g(\og)g(\a)} \og(x)\right)
\end{align*}
by \eqref{reflection}, \eqref{shift}, \eqref{cancellation} and \eqref{geom}.
Therefore, we obtain the lemma. 
\end{proof}

As an application of Theorem \ref{thirdtrans}, we have the following analogous formulas of \eqref{cor3.2} and \eqref{cor3.3}. 

\begin{cor}
Assume $(\a+\g+\oa\g, \e)=0$.   
\begin{enumerate}
\item  For any $x, y \neq 0$ with $x \neq 0, 1$, we have
\begin{align*}
&F_{1: 0:0}^{0:2:2} \left( \left. 
\begin{matrix}
\hspace{1mm} \\
\b
\end{matrix}
;
\begin{matrix}
\a, \g  \\
\hspace{1mm}
\end{matrix}
;
\begin{matrix}
\b\oa, \b\og \\
\hspace{1mm}
\end{matrix} 
\right| 
x,  y
\right)  = \b\overline{\a\g}(1-x) \gh{\oa\b,  \og \b}{\b}{x+y-xy} \\
& + \b\overline{\a\g} (1-x) \delta (x+y-xy)   
+\ob(-y)\left( 
\cfrac{\pc{\e}{\b}}{\p{\og}{\b}} \a\left(-\cfrac{y}{x} \right)
+\cfrac{\pc{\e}{\b}}{\p{\oa}{\b}}  \g\left(-\cfrac{y}{x}\right)
\right).
\end{align*}

\item  Assume $(\varphi, \e + \rho)=0$. For any $x \in \kappa \backslash \{ 0, 1 \}$, we have
\begin{align*}
&F_{1:0:1}^{0:2:3} \left( \left. 
\begin{matrix}
\hspace{1mm} \\
\b
\end{matrix}
;
\begin{matrix}
\a, \g  \\
\hspace{1mm}
\end{matrix}
;
\begin{matrix}
\b\oa, \b\og, \varphi \\
\rho
\end{matrix} 
\right| 
x,  \frac{x}{x-1}
\right)  
=
 \b\overline{\a\g}(1-x) {}_{3}F_{2} \left( 
\begin{matrix}
\oa\b,    \b\og,  \overline{\varphi}\rho  \\
\b,  \rho
\end{matrix}
; x 
\right) \\
& \hspace{15mm}+\ob\left( \cfrac{x}{1-x}\right)
\left(
\cfrac{ \p{\varphi}{\a\ob} \pc{\e}{\b}}{  \pc{\rho}{\a\ob} \p{\og}{\b}}\oa\left(1-x\right)
 +
\cfrac{ \p{\varphi}{\g\ob}  \pc{\e}{\b}}{ \pc{\rho}{\g\ob} \p{\oa}{\b}}\og\left(1-x\right)
\right).
 \end{align*}

\end{enumerate}
\end{cor}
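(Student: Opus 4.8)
The plan is to derive both reduction formulas directly from Theorem~\ref{thirdtrans} by feeding in appropriate maps $f$, in exact analogy with the way Corollary~\ref{firstcor} is deduced from Theorem~\ref{first trans}. The key initial observation is that, by the definition of the Kamp\'e de F\'eriet function, the left-hand side of each stated formula is $\frac{1}{(1-q)^{2}}$ times the double sum on the left of Theorem~\ref{thirdtrans}: for part (i) one takes $f(\m)\equiv 1$, while for part (ii) one takes $f(\m)=\p{\varphi}{\m}/\pc{\rho}{\m}$ together with the specialization $y=x/(x-1)$. The hypothesis $(\a+\g+\oa\g,\e)=0$ is precisely the assumption of Theorem~\ref{thirdtrans}, so in each case it only remains to simplify the right-hand side there.

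For part (i), once $f\equiv 1$ the inner $\m$-sum becomes $(1-q)\,{}_{1}F_{0}$ evaluated at $z=y(x-1)/x$, which I would compute using \eqref{geom}. For $\eta\neq\e$ this contributes $\eta(1-z)$, and since $1-z=(x+y-xy)/x$ we have $\eta(x)\eta(1-z)=\eta(x+y-xy)$; summing over all $\eta$ gives $(1-q)\,\gh{\oa\b,\og\b}{\b}{x+y-xy}$. The subtle point is the term $\eta=\e$, where \eqref{geom} instead returns the geometric branch $-q\delta(1-z)+1$. Collecting this with the $-\e(x+y-xy)$ correction needed to complete the partial sum to the full $ {}_{2}F_{1} $, and using $1-\e(w)=\delta(w)$, produces exactly the extra summand $\b\overline{\a\g}(1-x)\delta(x+y-xy)$. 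The two boundary contributions in the last line of Theorem~\ref{thirdtrans} already coincide with the remaining terms of the claim.

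For part (ii), the decisive simplification is that $y=x/(x-1)$ forces the inner argument $y(x-1)/x$ to equal $1$, so the inner $\m$-sum is $(1-q)\,\gh{\varphi,\oeta}{\rho}{1}$. Under the hypothesis $(\varphi,\e+\rho)=0$ I would apply the Vandermonde evaluation \eqref{Vande}, replacing this by $\p{\ol{\varphi}\rho}{\eta}/\pc{\rho}{\eta}$; this collapses the double sum over $\n,\m$ into the single sum $\b\overline{\a\g}(1-x)\,{}_{3}F_{2}\left({\oa\b,\b\og,\ol{\varphi}\rho\atop\b,\rho};x\right)$. For the two boundary terms I would substitute $y=x/(x-1)$ into the last line of Theorem~\ref{thirdtrans}, using $\ob(-y)=\ob(x/(1-x))$ and $\a(-y/x)=\a(1/(1-x))=\oa(1-x)$ (and likewise with $\g$ in place of $\a$), which reproduces the stated remainder.

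The only genuinely delicate step I anticipate is the boundary bookkeeping in part (i): one must isolate the $\eta=\e$ summand, recognize that the second branch of \eqref{geom} supplies the $-q\delta$ piece, and verify that completing the $\eta$-sum to a full $ {}_{2}F_{1} $ contributes precisely the compensating $\delta$-term together with the constant. This is the same mechanism that already appears in the proof of Corollary~\ref{firstcor}(i), so I would model the computation on that argument. Part (ii) should be routine once the inner argument is seen to specialize to $1$ and \eqref{Vande} is invoked.
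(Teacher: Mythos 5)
Your proposal is correct and follows essentially the same route as the paper's own proof: both parts are obtained from Theorem~\ref{thirdtrans} with $f\equiv 1$ (resp.\ $f(\m)=\p{\varphi}{\m}/\pc{\rho}{\m}$ and $y=x/(x-1)$), the inner sum in (i) handled via \eqref{geom} with exactly the $\eta=\e$ bookkeeping and $1-\e(w)=\delta(w)$ identification you describe, and the inner sum in (ii) collapsed by the Vandermonde evaluation \eqref{Vande} at argument $1$. The boundary-term substitutions you indicate also match the paper's computation.
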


\begin{proof} 
(i) By applying Theorem \ref{thirdtrans} to $f(\m) \equiv 1$, we have 
\begin{align*}
F_{1: 0:0}^{0:2:2} \left( \left. 
\begin{matrix}
\hspace{1mm} \\
\b
\end{matrix}
;
\begin{matrix}
\a, \g  \\
\hspace{1mm}
\end{matrix}
;
\begin{matrix}
\b\oa, \b\og \\
\hspace{1mm}
\end{matrix} 
\right| 
x,  y
\right)  
&=
%\cfrac{\b\overline{\a\g}(1-x)}{(1-q)^2} \sum_{\eta}\cfrac{(\b\oa)_{\m}(\b\og)_{\m}}{(\e)^{\circ}_{\eta}(\b)^{\circ}_{\eta}} \eta(x)
\cfrac{\b\overline{\a\g}(1-x)}{(1-q)} \sum_{\eta \in \widehat{\kappa^{*}}}
\cfrac{ \p{\b\oa + \b\og}{\eta}  }{  \pc{\b + \e}{\eta} } \eta(x)
{}_{1}F_{0} \left( 
\begin{matrix}
\oeta  \\
\hspace{1mm}
\end{matrix}
; \cfrac{y(x-1)}{x}
\right)\\
%\sum_{\m} \cfrac{(\overline{\eta})_{\m}}{(\e)^{\circ}_{\m}} \left(\cfrac{y(x-1)}{x}\right) \\
&+ \ob(-y)\left(  \cfrac{\pc{\e}{\b}}{\p{\og}{\b}} \a\left(-\cfrac{y}{x} \right)
+\cfrac{\pc{\e}{\b}}{\p{\oa}{\b}}\g\left(-\cfrac{y}{x}\right)
\right). \\
\end{align*}
Note that we assume $x \neq 1$ and $y \neq 0$. By \eqref{geom},  
the first term agrees with 
\begin{align*}
&\cfrac{\overline{\a\g}\b(1-x)}{1-q}
\sum_{\eta  \neq \e} \cfrac{ \p{\b\oa + \b\og}{\eta}  }{  \pc{\b  + \e}{\eta}  }  \eta(x)
\eta\left( 1- \cfrac{y(x-1)}{x}\right) \\
&\hspace{5mm} + \cfrac{\b\overline{\a\g}(1-x)  \e(x)}{1-q} \left(-q \delta \left( 1-\cfrac{y(1-x)}{x}\right) +  1 \right) \\
&=\cfrac{\overline{\a\g}\b(1-x)}{1-q}
 \sum_{\eta \in \widehat{\kappa^{*}} } \cfrac{  \p{\b\oa + \b\og}{\eta} }{ \pc{\b + \e}{\eta} } \eta(x+y-xy) \\
&\hspace{5mm}- \cfrac{\b\overline{\a\g}(1-x)  \e(x)}{1-q}
\left( \e \left( x + y -xy \right) +  q\delta \left(x + y -xy \right) - 1 \right) \\
&=\b\overline{\a\g}(1-x) \gh{\oa\b,  \og \b}{\b}{x+y-xy} + \b\overline{\a\g} (1-x) \delta (x+y-xy).
 \end{align*}
This proves the formula.
%The latter part is $1-q$ when $y(x-1)/x=1$, otherwise 0. Therefore RHS is 
%\begin{align*}
%& \b\overline{\a\g}(1-x) \gh{\oa\b,  \og \b}{\b}{x+y-xy} \\
%& + \b\overline{\a\g} (1-x) \delta (x+y-xy)   
%+\ob(-y)\left( \cfrac{\g(-1)}{j(\b\og, \g)} \a\left(-\cfrac{y}{x}\right) +  \cfrac{\a(-1)}{j(\b\oa, \a)} \g\left(-\cfrac{y}{x}\right) \right).
%\end{align*}

(ii) By applying Theorem \ref{thirdtrans} to $f(\m) = \p{\varphi}{\m}/ \pc{\rho}{\m}$ and putting $y= x / (x-1)$, 
we have
\begin{align*}
F_{1:0:1}^{0:2:3} \left( \left. 
\begin{matrix}
\hspace{1mm} \\
\b
\end{matrix}
;
\begin{matrix}
\a, \g  \\
\hspace{1mm}
\end{matrix}
;
\begin{matrix}
\b\oa, \b\og, \varphi \\
\rho
\end{matrix} 
\right| 
x,  \frac{x}{x-1}
\right)  
&=\cfrac{\b\overline{\a\g}(1-x)}{1-q} \sum_{\eta \in \widehat{\kappa^{*}}}
\cfrac{  \p{\b\oa + \b\og}{\eta}  }{  \pc{\b+ \e}{\eta} } \eta(x)
{}_{2}F_{1} \left( 
\begin{matrix}
\oeta, \varphi  \\
\rho
\end{matrix}
; 1
\right)\\
%\sum_{\m} \cfrac{(\overline{\eta})_{\m}}{(\e)^{\circ}_{\m}} \left(\cfrac{y(x-1)}{x}\right) \\
&+\ob\left( \cfrac{x}{1-x}\right)
\left(
\cfrac{ \p{\varphi}{\a\ob} \pc{\e}{\b}}{  \pc{\rho}{\a\ob} \p{\og}{\b}}\oa(1-x) 
+ \cfrac{ \p{\varphi}{\g\ob}  \pc{\e}{\b}}{ \pc{\rho}{\g\ob} \p{\oa}{\b}}\og(1-x)
\right).
\end{align*}
By Vandermonde's formula \eqref{Vande}, we have
\begin{align*}
 {}_{2}F_{1} \left( 
\begin{matrix}
\overline{\eta}, \varphi  \\
\rho
\end{matrix}
; 1
\right)
=
\cfrac{(\overline{\varphi} \rho)_{\eta}}{(\rho)^{\circ}_{\eta}},
\end{align*}
hence we obtain 
\begin{align*}
\cfrac{\b\overline{\a\g}(1-x)}{1-q} 
\sum_{\eta \in \widehat{\kappa^{*}} } \cfrac{ \p{\b\oa + \b\og}{\eta} }{ \pc{\b + \e}{\eta} } 
\eta(x)
{}_{2}F_{1} \left( 
\begin{matrix}
\overline{\eta}, \varphi  \\
\rho
\end{matrix}
; 1
\right)
=
\b\overline{\a\g}(1-x) {}_{3}F_{2} \left( 
\begin{matrix}
\oa\b,    \b\og,  \overline{\varphi}\rho  \\
\b,  \rho
\end{matrix}
; x 
\right).
\end{align*}
This proves the formula.
\end{proof}

%%%%%%%%%%%%%%%%%%%%%%%%%%% Nemoto's part %%%%%%%%%%%%%%%%%%%%%%%%%%%%%%%%

%%%%%%%%%%%%%%%%%%%%%%%%%%% Kumabe's part %%%%%%%%%%%%%%%%%%%%%%%%%%%%%%%%

%In this section, we give reduction formulas which are analogues of \cite[Corollary 3.4]{liuwang} over finite fields. To %obtain our results, we slightly extend \cite[Theorem 3.14 (ii)]{otsubo} which is an analogue of the formula due to Pfaff.  

Next, we give an analogous formula of \cite[Theorem 3.3]{liuwang}:
\begin{align*}
\begin{split}
&\sum_{i,j=0}^{\infty} \Omega(j) \frac{\p{a}{i} \p{c}{i} \p{b - c}{j} }{\p{b}{i+j}} \frac{x^{i} y^{j}}{i!j!} \\
&= (1-x)^{-a} \sum_{n=0}^{\infty} \frac{\p{a}{n} \p{b - c}{n} }{\p{b}{n} n! } \left( \frac{x}{x-1}\right)^{n}
\sum_{j=0}^{n} \Omega (j) \frac{\p{-n}{j}}{j! \p{1-a-n}{j}} \left( \frac{y(x-1)}{x} \right)^{j},
\end{split}
\end{align*}
where $\Omega (j)$ is a complex sequence.

%The following result is an analogue of \cite[Theorem 3.3]{liuwang}.

\begin{theo}\label{fourthtrans}
Assume $(\a + \g, \e) = (\a, \g)=0$. For any $x, y \in \kappa$ with $x \neq 0, 1$,  we have 
\begin{equation*}
\begin{split}
\sum_{\n, \m \in \widehat{\kappa^{*}}}&f(\m)\frac{\p{\a}{\n} \p{\g}{\n} \p{\b\og}{\m}}{\pc{\b}{\n\m}} \frac{\n(x)\m(y)}{\pc{\e}{\n} \pc{\e}{\m}} \\
&= \oa(1-x)\sum_{\eta \in \widehat{\kappa^{*}}} 
\frac{\p{\a + \b\og}{\eta} }{  \pc{ \b + \e}{\eta} } \eta\left(\frac{x}{x-1} \right) 
\sum_{\m \in \widehat{\kappa^{*}}}
f(\m)\frac{\p{\oeta}{\m}}{\pc{\ol{\a\eta} + \e}{\m}} \m\left(\frac{y(x-1)}{x} \right) \\
&+(1-q)^2f(\ob\g)\pc{\e}{\b}\p{\a}{\og}\ob(-y)\g\left(\frac{y}{x}\right).
\end{split}
\end{equation*}
\end{theo}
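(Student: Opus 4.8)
The plan is to follow the template of Theorems \ref{second trans} and \ref{thirdtrans}, but with a \emph{Pfaff}-type transformation in place of the Euler one. Before touching the double sum I would isolate the one-variable input as a companion to Lemma \ref{euler2}: under $(\a+\g,\e)=(\a,\g)=0$, for every $\m$ and $x\neq 1$,
\begin{align*}
\gh{\a, \g}{\b\m}{x} = \oa(1-x)\gh{\a, \og\b\m}{\b\m}{\frac{x}{x-1}} + \delta(\og\b\m)(1-q)\frac{g(\a\og)}{g(\a)g(\og)}\og(x).
\end{align*}
For $\m\neq\a\ob,\g\ob$ this is just \eqref{pfaff0}. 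The two exceptional characters are $\m=\a\ob$ and $\m=\g\ob$; for each I would reduce both sides to ${}_1F_0$'s via \eqref{cancellation}, \eqref{shift} and \eqref{geom}, exactly as in Lemma \ref{pfaff1}. The notable feature here—and the reason only one correction survives—is that, because the Pfaff transformation keeps $\a$ fixed, the case $\m=\a\ob$ produces \emph{no} residue: the apparent discrepancy collapses to the reflection identity $g(\g\oa)g(\og\a)=\oa(-1)g(\g)g(\og)$, an instance of \eqref{reflection}. Only $\m=\g\ob$ leaves a genuine term, which \eqref{reflection} puts in the clean shape above.

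Granting the lemma, I would split $\pc{\b}{\n\m}=\pc{\b}{\m}\pc{\b\m}{\n}$ by \eqref{transitivity}, so that the left-hand side factors as
\begin{align*}
(1-q)\sum_{\m\in\widehat{\kappa^{*}}} f(\m)\frac{\p{\b\og}{\m}}{\pc{\b}{\m}\pc{\e}{\m}}\m(y)\,\gh{\a, \g}{\b\m}{x},
\end{align*}
the inner $\n$-sum being $(1-q)$ times $\gh{\a,\g}{\b\m}{x}$. Applying the lemma replaces this ${}_2F_1$ by its Pfaff transform plus the $\delta(\og\b\m)$ term. For the main part I would expand the transformed ${}_2F_1$ back into a sum over $\n$, recombine numerator and denominator by \eqref{transitivity} into $\p{\b\og}{\m\n}$ and $\pc{\b}{\m\n}$, and substitute $\eta=\m\n$; the arguments then merge into $\eta(x/(x-1))$ and $\m(y(x-1)/x)$. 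Finally, two applications of \eqref{prelection} convert $\p{\a}{\eta\om}/\pc{\e}{\eta\om}$ into $(\p{\a}{\eta}/\pc{\e}{\eta})(\p{\oeta}{\m}/\pc{\ol{\a\eta}}{\m})$, producing exactly the claimed double sum with outer factor $\p{\a+\b\og}{\eta}/\pc{\b+\e}{\eta}$ and inner factor $\p{\oeta}{\m}/\pc{\ol{\a\eta}+\e}{\m}$.

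It remains to treat the $\delta(\og\b\m)$ contribution, which is supported at $\m=\ob\g$. Multiplying the lemma's residue by the prefactor evaluated at $\m=\ob\g$ and simplifying the resulting Gauss-sum quotient by \eqref{reflection}—which also supplies the sign $\ob(-1)$ relating the product $\og(x)\ob\g(y)$ to $\ob(-y)\g(y/x)$—should yield the stated term $(1-q)^2 f(\ob\g)\pc{\e}{\b}\p{\a}{\og}\ob(-y)\g(y/x)$. I expect the main obstacle to be precisely this bookkeeping: establishing the asymmetric lemma (in particular the fact that $\m=\a\ob$ contributes nothing, which is a reflection identity rather than a coincidence) and then tracking the $(-1)$-character factors through the final Gauss-sum simplification of the single surviving correction.
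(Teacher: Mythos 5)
Your proposal is correct and follows essentially the same route as the paper: the same Pfaff-type companion lemma (the paper's Lemma \ref{Pfaff}, with the single correction supported at $\m=\og\b$ and the case $\m=\a\ob$ contributing nothing), the same factorization of the double sum via \eqref{transitivity}, the substitution $\eta=\m\n$ with \eqref{prelection}, and the same reflection-formula bookkeeping for the surviving $\delta$-term.
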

%二変数のフーリエ変換でも証明できるか？
\begin{proof}
By Lemma \ref{Pfaff} below, the left hand side of the formula above coincides with
\begin{equation*}
\begin{split}
(1-q)&\sum_{\m \in \widehat{\kappa^{*}}} 
f(\m)\frac{\p{\b\og}{\m}}{\pc{\b + \e}{\m}} \m (y)  \\
&\times\left(\oa(1-x)\gh{\a,\og\b\m}{\b\m}{ \frac{x}{x-1} } + \delta(\m\b\og)\frac{g(\a\og)}{g(\a)g(\og)}\og(x)(1-q) \right).
\end{split}
\end{equation*}
By \eqref{transitivity} and putting $\eta$ to $\m\n$, we obtain
\begin{equation*}
\begin{split}
&(1-q)\sum_{\m \in \widehat{\kappa^{*}}}
f(\m)\frac{\p{\b\og}{\m}}{\pc{\b + \e}{\m} } \m(y) 
\oa(1-x)\gh{\a,\og\b\m}{\b\m}{ \frac{x}{x-1} } \\
&=\oa(1-x)\sum_{\m, \n \in \widehat{\kappa^{*}}}
f(\m)\frac{\p{\b\og}{\m\n}\p{\a}{\n}}{\pc{\b}{\m\n}\pc{\e}{\m}\pc{\e}{\n}}\m(y)\n \left(\frac{x}{x-1}\right) \\
&=\oa(1-x)\sum_{\eta, \m  \in \widehat{\kappa^{*}} }f(\m)\frac{\p{\a}{\om \eta}\p{\b\og}{\eta}}{\pc{\b}{\eta}\pc{\e}{\m}\pc{\e}{\om\eta}}\m(y)\om\eta\left(\frac{x}{x-1}\right) \\
&=\oa(1-x)\sum_{\eta \in \widehat{\kappa^{*}}}\frac{\p{\a + \b\og}{\eta} }{\pc{\b+ \e}{\eta} } \eta\left(\frac{x}{x-1}\right)
\sum_{\m \in \widehat{\kappa^{*}}}f(\m)\frac{\p{\oeta}{\m}}{\pc{\ol{a\eta} + \e}{\m} }  \m\left(\frac{y(x-1)}{x}\right).
\end{split}
\end{equation*}
By an elementary computation,  we can see that the remaining term coincides with
\begin{equation*}
(1-q)^2f(\ob\g)\pc{\e}{\b}\p{\a}{\og}\ob(-y)\g\left(\frac{y}{x}\right).
\end{equation*}
Hence the proof is complete.
\end{proof}

\begin{lemm}\label{Pfaff}
Suppose that $(\a + \g, \e) = (\a,  \g)= 0$. 
Then for any $\m \in \widehat{\kappa^{*}}$ and $x \in \kappa \backslash \{ 1 \}$,  we have
\begin{equation*}
\begin{split}
\gh{\a, \g}{\b\m}{x} = \oa(1-x)&\gh{\a, \b\og\mu}{\b\m}{\frac{x}{x-1}} 
+ (1-q)\delta (\m\b\og)\frac{g(\a\og)}{g(\a)g(\og)}\og(x).
\end{split}
\end{equation*}
\end{lemm}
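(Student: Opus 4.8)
The plan is to mimic the structure of the earlier transformation lemmas (Lemma~\ref{pfaff1}, Lemma~\ref{euler1}, Lemma~\ref{euler2}): treat the generic case via the Pfaff transformation \eqref{pfaff0}, then handle the degenerate character separately by hand. First I would observe that the asserted identity relates ${}_{2}F_{1}$ with parameters $\a,\g$ over $\b\m$ and ${}_{2}F_{1}$ with parameters $\a,\b\og\m$ over $\b\m$; applying \eqref{pfaff0} with the upper parameters $\a,\g$, denominator $\b\m$, and variable $x$ requires the balancing hypothesis $(\a+\g,\e+\b\m)=0$, i.e. the Pfaff formula is valid precisely when $\b\m\neq\a$ and $\b\m\neq\g$ (so that neither $\g$ equals $\e$ relative to the transform, nor the denominator collides). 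Under our standing assumption $(\a+\g,\e)=0$ one checks that the transform of $\g$ by the Pfaff rule is $\b\og\m$, giving exactly the first term on the right, while the Kronecker delta term $\delta(\m\b\og)$ vanishes since $\m\b\og\neq\e$ in the generic range. As with $x=0$ in the previous lemmas, the case $x=0$ is trivial, so I would reduce immediately to $x\neq 0$.

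The substance is therefore the single degenerate character $\m=\ob\g$ (equivalently $\m\b\og=\e$), where \eqref{pfaff0} does not directly apply and the delta term switches on. Here I would compute both sides independently and show they agree. On the left, $\b\m=\g$, so the series becomes $\gh{\a,\g}{\g}{x}$, which has a repeated denominator parameter; using the cancellation formula \eqref{cancellation} together with the shift formula \eqref{shift} reduces it to a ${}_{1}F_{0}$, and then \eqref{geom} evaluates it in closed form. On the right, the analogous collapse happens for $\gh{\a,\e}{\g}{x/(x-1)}$ after the substitution $\m=\ob\g$ (since $\b\og\m=\e$), again handled by \eqref{cancellation}, \eqref{shift}, and \eqref{geom}, and the explicit delta term contributes $\frac{g(\a\og)}{g(\a)g(\og)}\og(x)$. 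The goal is to verify that these two closed-form expressions coincide; this is the same pattern executed in the proof of Lemma~\ref{pfaff1} for its $\m=\ob\g$ case.

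I expect the main obstacle to be the bookkeeping in the $\m=\ob\g$ computation: after reducing both sides to geometric-type sums one must carefully apply the reflection formula \eqref{reflection} and the transitivity \eqref{transitivity} of Pochhammer symbols, and use the hypothesis $(\a,\g)=0$ to collapse the Gauss-sum quotients into matching form. The condition $(\a,\g)=0$ (that $\a\ne\g$) is precisely what guarantees $\b\m=\g$ is a genuinely single coincidence rather than a double one, so the reductions via \eqref{cancellation} land on a clean ${}_{1}F_{0}$; I would flag that this hypothesis, rather than being cosmetic, is what makes the degenerate evaluation tractable. Given the close parallel to Lemma~\ref{pfaff1}, once the generic case is dispatched by \eqref{pfaff0} the remaining verification is a routine, if delicate, character-sum manipulation, and I would present it by asserting that ``by a computation similar to the proof of Lemma~\ref{pfaff1}'' both sides equal the same explicit expression.
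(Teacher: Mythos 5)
There is a genuine gap. You correctly note that the Pfaff formula \eqref{pfaff0}, applied with upper parameters $\a,\g$ and lower parameter $\b\m$, requires $(\a+\g,\e+\b\m)=0$, i.e.\ both $\b\m\neq\a$ and $\b\m\neq\g$ --- but you then declare that ``the substance is the single degenerate character $\m=\ob\g$'' and treat only that case. The character $\m=\a\ob$ is a second, equally degenerate case that your argument leaves unproved: there the left-hand side is $\gh{\a,\g}{\a}{x}$, with the upper parameter $\a$ colliding with the lower one, so \eqref{pfaff0} is not available; and the delta term on the right is switched off (since $\m\b\og=\a\og\neq\e$ by the hypothesis $(\a,\g)=0$), so nothing compensates automatically and the identity must be verified by a separate computation. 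This is exactly what the paper does: it checks the generic case via \eqref{pfaff0} and then verifies \emph{both} exceptional characters, showing that for $\m=\g\ob$ both sides equal $\oa(1-x)+\frac{g(\a\og)}{g(\a)g(\og)}\og(x)$ and for $\m=\a\ob$ both sides equal $\og(1-x)+\frac{g(\oa\g)}{g(\g)g(\oa)}\oa(x)$, in each case by the same \eqref{cancellation}, \eqref{shift}, \eqref{geom}, \eqref{reflection} manipulations you describe. Your outline of the generic case and of the $\m=\ob\g$ case agrees with the paper's, so the repair is simply to add the $\m=\a\ob$ verification; as written, however, the proposal does not establish the lemma for that character.

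A smaller point: the role you assign to $(\a,\g)=0$ (ensuring $\b\m=\g$ is ``a single coincidence'') is only part of the story. That hypothesis also guarantees the two exceptional cases $\m=\a\ob$ and $\m=\g\ob$ are distinct, and --- crucially for the case you omitted --- that the delta term $\delta(\m\b\og)$ vanishes when $\m=\a\ob$, which is what forces the separate hand computation there.
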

\begin{proof}
If $(\a + \g, \e + \b\m) = 0$, then the argument is just Pfaff's formula \eqref{pfaff0}.
If $\m = \g\ob$ (resp. $\m = \a\ob$), by similar computation to the proof of Lemma \ref{pfaff1}, both sides 
coincide with
\begin{align*}
\oa(1-x) + \frac{g(\a\og)}{ g(\a)g(\og) }\og(x). \hspace{6mm}
\left(
resp.  \hspace{2mm} \og(1-x) + \frac{g(\oa\g)}{  g(\g)  g(\oa)  }  \oa(x).
\right)
\end{align*}
%We consider the case $\m=\g\ob$. By (\ref{shift}), (\ref{cancellation}) and (\ref{geom}), both sides coincide with 
%\begin{equation*}
%\oa(1-x) + \frac{g(\a\og)}{ g(\a)g(\og) }\og(x)
%\end{equation*}
%by (\ref{shift}), (\ref{cancellation}) and (\ref{geom}).
%The case $\m = \a\ob$ is similar to the case $\m=\g\ob$. We can see that both sides of the above formula coincide with
%\begin{equation*}
%\og(1-x) + \frac{g(\oa\g)}{  g(\g)  g(\oa)  }  \oa(x).
%\end{equation*}
This proves the lemma.
\end{proof}

As an application of Theorem \ref{fourthtrans}, we obtain the following analogous formulas of \eqref{cor3.6},  \eqref{cor3.7} and \eqref{cor3.9}. 

\begin{cor}\label{fourthcor}
\begin{enumerate}
\item \label{2f1} Asuume $(\a + \g, \e) = (\a, \g + \ol{\sigma})= 0$. For any $x \in \kappa \backslash \{ 0, 1 \}$, we have 
\begin{align*}
\begin{split}
F_{1: 0:0}^{0:2:2} \left( \left. 
\begin{matrix}
\hspace{1mm} \\
\b
\end{matrix}
;
\begin{matrix}
\a, \g \\
\hspace{1mm}
\end{matrix}
;
\begin{matrix}
\b\og, \sigma \\
\hspace{1mm}
\end{matrix} 
\right| 
x,  \frac{x}{x-1}
\right) 
&=\oa(1-x)\gh{\b\og, \a\sigma}{\b}{\frac{x}{x-1}} \\
&+\p{\sigma}{\ob\g}\pc{\e}{\b}\p{\a}{\og}\ob\left(\frac{x}{1-x}\right)\og(x-1).
\end{split}
\end{align*}
\item \label{3f2} Assume $(\a + \g + \sigma, \e) = (\a,\g+ \ol{\sigma}) = (\tau, \e+\oa)=0$. For any $x \in \kappa \backslash \{ 0, 1 \}$, we have 
\begin{equation*}
\begin{split}
F_{1:0:1}^{0:2:3} &\left( \left. 
\begin{matrix}
\hspace{1mm} \\
\b
\end{matrix}
;
\begin{matrix}
\a, \g  \\
\hspace{1mm}
\end{matrix}
;
\begin{matrix}
\b\og, \sigma, \tau \\
\a\sigma\tau
\end{matrix} 
\right| 
x,  \frac{x}{x-1}
\right) \\
=\oa&(1-x)\left(
{_{3}F_{2}}\left(
\begin{matrix}
\b\og, \a\sigma, \a\tau \\
\b, \a\sigma\tau
\end{matrix}
; \frac{x}{x-1}
\right) \right. \\
 +&\left.\frac{\gc{\a\sigma\tau}g(\og)\gc{\b}}{g(\a)g(\sigma)g(\tau)g(\b\og)}\a(-1)\b\left(\frac{x-1}{x}\right)\og(1-x)
\right)\\
&+\frac{\p{\sigma}{\ob\g}\p{\tau}{\ob\g}\pc{\e}{\b}\p{\a}{\og}}{\pc{\a\sigma\tau}{\ob\g}}\ob\left(\frac{x}{1-x} \right)\og(x-1).
\end{split}
\end{equation*}

\item Assume $(\a^{2} + \g + \sigma + \a^{2} \sigma, \e) = (\a^{2}, \g ) =0$. For any $x\in \kappa \backslash \{0, 1\}$, we have 
\begin{align*}
F_{1:0:1}^{0:2:3} \left( \left. 
\begin{matrix}
\hspace{1mm} \\
\b
\end{matrix}
;
\begin{matrix}
\a^{2}, \g  \\
\hspace{1mm}
\end{matrix}
;
\begin{matrix}
\b\og, \sigma, \oa \\
\a\sigma
\end{matrix} 
\right| 
x,  \frac{x}{x-1}
\right)
&= \oa^{2} (1-x) {_{3}F_{2}}\left(
\begin{matrix}
\b\og, \a^{2}\sigma, \a \\
\b, \a\sigma
\end{matrix}
; \frac{x}{x-1}
\right) \\
&+\ol{\a^{2}\g}\b (1-x) \ob(-x) q^{\delta (\a\sigma)} \frac{\p{\sigma}{\a} \pc{\e}{\b} }{\p{\oa^{2}}{\a} \p{\og}{\b}}  \\
&+\ob(x) \b\og(1-x) \frac{\p{\sigma}{\ob\g} \p{\oa}{\ob\g} \pc{\e}{\b}}{\pc{\a\sigma}{\ob\g} \pc{\oa^{2}}{\g}}.
\end{align*}
\end{enumerate}
\end{cor}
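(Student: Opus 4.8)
The plan is to derive all three parts from Theorem~\ref{fourthtrans} in the same way that Corollaries~\ref{firstcor} and~\ref{secondcor} were obtained from Theorems~\ref{first trans} and~\ref{second trans}: specialize the auxiliary map $f$ and set $y=x/(x-1)$, so that the inner character argument $y(x-1)/x$ collapses to $1$ and the inner $\m$-sum becomes a hypergeometric sum at the unit argument. Concretely I would take $f(\m)=\p{\sigma}{\m}$ for~(i), $f(\m)=\p{\sigma}{\m}\p{\tau}{\m}/\pc{\a\sigma\tau}{\m}$ for~(ii), and $f(\m)=\p{\sigma}{\m}\p{\oa}{\m}/\pc{\a\sigma}{\m}$ for~(iii); matching the remaining Pochhammer factors of the defining double sum against the left-hand side of Theorem~\ref{fourthtrans} (with $\a$ replaced by $\a^2$ in~(iii)) identifies the Kamp\'e de F\'eriet function with the target. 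The hypotheses of Theorem~\ref{fourthtrans}, namely $(\a^2+\g,\e)=(\a^2,\g)=0$ for~(iii), follow from the stated assumptions since $(\a^2+\g+\sigma+\a^2\sigma,\e)=0$ forces $\delta(\a^2)=\delta(\g)=\delta(\sigma)=\delta(\a^2\sigma)=0$.

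After the substitution $y=x/(x-1)$, the inner sum over $\m$ is $(1-q)$ times a ${}_2F_1$ at $1$ in case~(i) and a ${}_3F_2$ at $1$ in cases~(ii),(iii). For~(i) I would evaluate it by the Vandermonde formula~\eqref{Vande}, exactly as in Corollary~\ref{firstcor}(ii). For~(ii) and~(iii) the key observation is that the inner ${}_3F_2(1)$ is Saalsch\"utzian: in~(iii) the numerator characters $\sigma,\oa,\oeta$ and denominator characters $\a\sigma,\ol{\a^2\eta}$ satisfy $\sigma\cdot\oa\cdot\oeta=\a\sigma\cdot\ol{\a^2\eta}$, so the Saalsch\"utz formula~\eqref{saalschutz} applies and splits the inner sum into two closed terms.

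The assembly then proceeds term by term. Multiplying the \emph{first} Saalsch\"utz term, which for~(iii) equals $\p{\a^2\sigma}{\eta}\p{\a}{\eta}/(\p{\a^2}{\eta}\pc{\a\sigma}{\eta})$ (using that $\a^2,\a^2\sigma,\a\neq\e$, so the circled and plain Gauss sums there agree), against the outer coefficient $\p{\a^2+\b\og}{\eta}/\pc{\b+\e}{\eta}$ telescopes to $\p{\b\og}{\eta}\p{\a^2\sigma}{\eta}\p{\a}{\eta}/(\pc{\b}{\eta}\pc{\a\sigma}{\eta}\pc{\e}{\eta})$, which is precisely the summand of the target; summing over $\eta$ produces the main term $\oa^2(1-x)\,{}_3F_2(\b\og,\a^2\sigma,\a;\b,\a\sigma;x/(x-1))$ directly, with no need for~\eqref{cancellation}. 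The \emph{second} Saalsch\"utz term reorganizes into an $F(\b\og,\b;x)$-type sum, which the shift formula~\eqref{shift} and the geometric evaluation~\eqref{geom} turn into the explicit $q^{\delta(\a\sigma)}$-correction. Finally, the boundary term $(1-q)^2 f(\ob\g)\pc{\e}{\b}\p{\a^2}{\og}\ob(-y)\g(y/x)$ of Theorem~\ref{fourthtrans}, after inserting $y=x/(x-1)$, simplifying $\ob(-y)\g(y/x)=\og(-1)\ob(x)\b\og(1-x)$, and using~\eqref{prelection} in the form $\p{\a^2}{\og}\og(-1)=1/\pc{\oa^2}{\g}$, reproduces the last correction; the analogous reductions supply the single correction of~(i) and the two corrections of~(ii).

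The main obstacle I anticipate is the delicate bookkeeping of circled versus uncircled Pochhammer symbols together with the exceptional characters at which~\eqref{Vande} and~\eqref{saalschutz} degenerate: after clearing the $\eta$-dependent bases one tends to obtain $\pc{\a\sigma}{\eta}$ where the clean hypergeometric form wants $\p{\a\sigma}{\eta}$, and the resulting $q^{\delta(\cdots)}$ discrepancy at a single character must be shown to cancel against the term produced at the $\eta$ where the hypothesis of the summation formula fails. For~(iii) there is the further subtlety that the parameters mix $\a$ and $\a^2$; this is the finite-field manifestation of the fact that the complex ${}_4F_3$ of~\eqref{cor3.9} collapses to a ${}_3F_2$ because the characters attached to $1+\tfrac{a+d}{2}$ and $\tfrac{a+d}{2}$ coincide, so one must confirm that the Saalsch\"utz output assembles into the stated ${}_3F_2$ rather than a genuine ${}_4F_3$, and that the non-degeneracy requirement $\sigma+\oa+\oeta\neq\e+\a\sigma+\ol{\a^2\eta}$ of~\eqref{saalschutz} holds under the hypotheses.
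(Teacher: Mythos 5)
Your proposal is correct and follows essentially the same route as the paper: apply Theorem \ref{fourthtrans} with $f(\m)=\p{\sigma}{\m}$, $\p{\sigma+\tau}{\m}/\pc{\a\sigma\tau}{\m}$ and $\p{\sigma+\oa}{\m}/\pc{\a\sigma}{\m}$ respectively, put $y=x/(x-1)$ so the inner sum sits at argument $1$, and evaluate it by a Gauss-type summation in (i) and by the Saalsch\"utz formula \eqref{saalschutz} in (ii) and (iii), reassembling the boundary terms exactly as you describe. The only minor deviation is in (i), where the paper invokes the Euler--Gauss formula \eqref{eulergauss} rather than \eqref{Vande}, since the lower parameter $\ol{\a\eta}$ of the inner ${}_{2}F_{1}(1)$ varies with $\eta$ and the non-degeneracy condition there reduces to the uniform requirement $\a\sigma\neq\e$, avoiding the exceptional-$\eta$ bookkeeping you anticipate.
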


\begin{proof}
%The equality in (\ref{2f1}) 
(i) By applying Theorem \ref{fourthtrans} to $f(\m)=\p{\sigma}{\m}$ and putting $y = x/(x-1)$, the left hand side in loc. cit.  is equal to
\begin{equation*}
(1-q)^2F_{1: 0:0}^{0:2:2} \left( \left. 
\begin{matrix}
\hspace{1mm} \\
\b
\end{matrix}
;
\begin{matrix}
\a, \g \\
\hspace{1mm}
\end{matrix}
;
\begin{matrix}
\b\og, \sigma \\
\hspace{1mm}
\end{matrix} 
\right| 
x,  \frac{x}{x-1}
\right),
\end{equation*}
and the first and second terms on the right hand side are equal to 
\begin{align*}
&(1-q)^2\oa(1-x)\gh{\b\og, \a\sigma}{\b}{\frac{x}{x-1}}, \\
&(1-q)^2\p{\sigma}{\ob\g}\pc{\e}{\b}\p{\a}{\og}\ob\left(\frac{x}{1-x}\right)\og(x-1),
\end{align*}
respectively. Here we used \eqref{eulergauss} to compute the first term. This proves the formula.

%By the Euler-Gauss formula \eqref{eulergauss}, we can see that the first term on the right hand side of Theorem \ref{fourthtrans} is equal to
%\begin{equation*}
%(1-q)^2\oa(1-x)\gh{\b\og, \a\sigma}{\b}{\frac{x}{x-1}}.
%\end{equation*}
%The second term on the right hand side of Theorem \ref{fourthtrans} is equal to
%\begin{equation*}
%\begin{split}
%&(1-q)^2\p{\sigma}{\ob\g}\ob\g\left(\frac{x}{y(x-1)}\right)\pc{\e}{\b}\p{\a}{\og}\ob(-y)\g\left(\frac{y}{x}\right) \\
%&=(1-q)^2\p{\sigma}{\ob\g}\pc{\e}{\b}\p{\a}{\og}\ob\left(\frac{x}{1-x}\right)\og(x-1).
%\end{split}
%\end{equation*}

(ii) We apply Theorem \ref{fourthtrans} to $f(\m) =\p{\sigma + \tau}{\m} / \pc{\a\sigma\tau}{\m}$ and put $y = x/(x-1)$. 
The left hand side coincides with 
\begin{equation*}
(1-q)^2F_{1:0:1}^{0:2:3} \left( \left. 
\begin{matrix}
\hspace{1mm} \\
\b
\end{matrix}
;
\begin{matrix}
\a, \g  \\
\hspace{1mm}
\end{matrix}
;
\begin{matrix}
\b\og, \sigma, \tau \\
\a\sigma\tau
\end{matrix} 
\right| 
x,  \frac{x}{x-1}
\right) . 
\end{equation*}
By the Saalsch\"utz formula \eqref{saalschutz} and the formulas \eqref{shift} and \eqref{geom},  we can see that the first term on the right hand side is equal to
\begin{equation*}
\begin{split}
&(1-q)^2 \oa(1-x)\left(
{}_{3}F_2\left(
\begin{matrix}
\b\og, \a\sigma, \a\tau \\
\b, \a\sigma\tau
\end{matrix}
; \frac{x}{x-1}
\right) \right. \\
&+\left.\frac{\gc{\a\sigma\tau}g(\og)\gc{\b}}{g(\a)g(\sigma)g(\tau)g(\b\og)}\a(-1)\b\left(\frac{x-1}{x}\right)\og(1-x) 
\right).
\end{split}
\end{equation*}
The second term on the right hand side is equal to 
\begin{equation*}
(1-q)^2\frac{\p{\sigma}{\ob\g}\p{\tau}{\ob\g}\pc{\e}{\b}\p{\a}{\og}}{\pc{\a\sigma\tau}{\ob\g}}\ob\left(\frac{x}{1-x}\right)\og(x-1).
\end{equation*}
Therefore we obtain the formula.

(iii) If we apply Theorem \ref{fourthtrans} to $f(\m) =\p{\sigma+ \oa}{\m} / \pc{\a\sigma}{\m}$ and put $y = x/(x-1)$, the left hand side coincides with 
\begin{equation*}
(1-q)^{2} F_{1:0:1}^{0:2:3} \left( \left. 
\begin{matrix}
\hspace{1mm} \\
\b
\end{matrix}
;
\begin{matrix}
\a^{2}, \g  \\
\hspace{1mm}
\end{matrix}
;
\begin{matrix}
\b\og, \sigma, \oa \\
\a\sigma
\end{matrix} 
\right| 
x,  \frac{x}{x-1}
\right).
\end{equation*}
By the Saalsch\"utz formula \eqref{saalschutz} and the formulas \eqref{shift} and \eqref{geom},  we can see that the first term on the right hand side is equal to
\begin{align*}
(1-q)^2 \oa^{2} (1-x)  \left(
{}_{3}F_2\left(
\begin{matrix}
\b\og, \a^{2}\sigma, \a \\
\b, \a\sigma
\end{matrix}
; \frac{x}{x-1}
\right) 
+ \ob(-x) \b \og(1-x) q^{\delta (\a\sigma)} \frac{ \p{\sigma}{\a} \pc{\e}{\b} }{\p{\oa^{2}}{\a} \p{\og}{\b}}  
\right).
\end{align*}
The second term on the right hand side agrees with 
\begin{align*}
\ob(x) \b\og(1-x) \frac{\p{\sigma}{\ob\g} \p{\oa}{\ob\g} \pc{\e}{\b}}{\pc{\a\sigma}{\ob\g} \pc{\oa^{2}}{\g}}.
\end{align*}
Hence the proof is complete. 
\end{proof}

If we specialize parameters and arguments, then we obtain the following analogous formula of \cite[Corollary 5.4]{liuwang}.

\begin{cor}
Assume $p \neq 2$. 
\begin{enumerate}
\item Suppose that $(\a + \g^2,\e)=0$. Then, we have
\begin{align*}
&\kF{0:2:2}{1:0:0}{}{\b^2}{\a,\g^2}{}{\b^2\og^2, \overline{\a\g^2}}{}{\dfrac{1}{2},-1} \\
&=\a(2)\sum_{\chi^2=\e}\dfrac{\gc{\b^2}g(\b\og\chi)}{g(\b^2\og^2)\gc{\b\g\chi}}+\g(4) \p{\ol{\a\g^{2}}}{\ob^{2}\g^{2}} \pc{\e}{\b^2} \p{\a}{\og^2} .
\end{align*}

\item Suppose that $(\a + \g^2+\b^2\g^2+\b^2\og^2,\e)=0$. Then, we have
\begin{align*}
&\kF{0:2:2}{1:0:0}{}{\b^2}{\a,\g^2}{}{\b^2\og^2,\oa\b^2\g^2}{}{-1,\dfrac{1}{2}}\\
&=\oa(2)\sum_{\chi^2=\e}\dfrac{\gc{\b^2}g(\phi)}{g(\phi\b\og\chi)g(\b\g\chi)}+\b\og(4)\p{\oa\b^2\g^2}{\ob^2\g^2}\pc{\e}{\b^2}\p{\a}{\og^2}.
\end{align*}

\item Suppose that $(\a + \g^{2} + \ob^2\g^2+\b^4\og^2,\e)=0$. Then, we have
\begin{align*}
&\kF{0:2:2}{1:0:0}{}{\b^2}{\a,\g^2}{}{\b^2\og^2,\overline{\a\b^2}\g^2}{}{-1,\dfrac{1}{2}}\\
&=\oa(2)\sum_{\chi^2=\e}\dfrac{\gc{\b}\gc{\phi\b}}{g(\b^2\og\chi) g(\phi\g\chi)}+\b\og(4)\p{\overline{\a\b^2}\g^2}{\ob^2\g^2}\pc{\e}{\b^2}\p{\a}{\og^2}.
\end{align*}
\end{enumerate}
\end{cor}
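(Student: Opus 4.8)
The plan is to obtain all three identities by specializing the reduction formula of Corollary \ref{fourthcor} (i) and then evaluating the resulting ${}_2F_1$ by one of the summation formulas \eqref{kummer}, \eqref{gsecond}, \eqref{bailey}. Recall that Corollary \ref{fourthcor} (i) writes $F_{1:0:0}^{0:2:2}$ with parameters $\b$; $\a,\g$; $\b\og,\sigma$ and arguments $(x,\,x/(x-1))$ as $\oa(1-x)\gh{\b\og,\a\sigma}{\b}{x/(x-1)}$ plus one explicit correction term. In each part I would substitute $\b\mapsto\b^2$ and $\g\mapsto\g^2$, keep $\a$, and choose $\sigma$ so that $\a\sigma$ acquires the shape required by the target summation formula, while choosing $x$ so that $x/(x-1)$ becomes $-1$ or $\tfrac12$.

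For (i) I would take $x=\tfrac12$, giving arguments $(\tfrac12,-1)$, and $\sigma=\ol{\a\g^2}$, so that $\a\sigma=\og^2$ and $\b\og\mapsto\b^2\og^2$. The inner factor becomes $\gh{\b^2\og^2,\og^2}{\b^2}{-1}$, which is of Kummer type \eqref{kummer} with squared parameter $\b^2\og^2=(\b\og)^2$; writing its square roots as $\b\og\chi$ yields $\sum_{\chi^2=\e}\gc{\b^2}g(\b\og\chi)/(g(\b^2\og^2)\gc{\b\g\chi})$, and the prefactor $\oa(1-x)$ equals $\a(2)$. For (ii) and (iii) I would take $x=-1$, giving arguments $(-1,\tfrac12)$ and prefactor $\oa(1-x)=\oa(2)$: taking $\sigma=\oa\b^2\g^2$ gives $\a\sigma=\b^2\g^2$ and the inner factor $\gh{\b^2\og^2,\b^2\g^2}{\b^2}{\tfrac12}$, of Gauss-second type \eqref{gsecond} with square roots $\b\og,\b\g$ chosen so that their product is $\b^2$, while taking $\sigma=\ol{\a\b^2}\g^2$ gives $\a\sigma=\ob^2\g^2$ and the inner factor $\gh{\b^2\og^2,\ob^2\g^2}{\b^2}{\tfrac12}$, whose numerator is the conjugate pair $(\b\og)^2,\ob^2\g^2$ fitting the Bailey formula \eqref{bailey}.

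The rest is bookkeeping. I would simplify the character factor $\ob(x/(1-x))\og(x-1)$ of the correction term of Corollary \ref{fourthcor} (i) under $\b\mapsto\b^2,\g\mapsto\g^2$: at $x=\tfrac12$ the $\ob^2$-factor is trivial and $\og^2(-\tfrac12)=\g(4)$, whereas at $x=-1$ one gets $\ob^2(-\tfrac12)\og^2(-2)=\b(4)\og(4)=\b\og(4)$, using $\ob^2(-1)=\og^2(-1)=1$ and $\g^2(2)=\g(4)$; the surviving Pochhammer factors $\p{\sigma}{\ob^2\g^2}\pc{\e}{\b^2}\p{\a}{\og^2}$ then reproduce the stated second terms. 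For (iii) there is one further step: \eqref{bailey} produces denominators $g(\phi\chi\b^2\og)\,g(\chi\g)$, which I would reindex by $\chi\mapsto\phi\chi$ (legitimate as $\chi$ runs over $\{\e,\phi\}$) to reach the stated $g(\b^2\og\chi)\,g(\phi\g\chi)$.

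Finally I would match hypotheses. For (i) the condition $(\a+\g,\e)=0$ of Corollary \ref{fourthcor} (i) becomes exactly the stated $(\a+\g^2,\e)=0$, and \eqref{kummer} needs no further condition. For (ii) and (iii) the auxiliary condition $(\a,\g+\ol\sigma)=0$ of the reduction, together with the non-degeneracy conditions attached to \eqref{gsecond} and \eqref{bailey}, specialize under the chosen square-root identifications to non-triviality requirements on the relevant squares that package into the stated merged hypotheses $(\a+\g^2+\b^2\g^2+\b^2\og^2,\e)=0$ and $(\a+\g^2+\ob^2\g^2+\b^4\og^2,\e)=0$. I expect the main obstacle to be precisely this hypothesis bookkeeping together with the verification that each specialized ${}_2F_1$ genuinely has the squared or conjugate parameter shape its summation formula demands; once that is in place, the character arithmetic indicated above is routine.
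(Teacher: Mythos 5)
Your proposal is correct and is exactly the paper's route: the paper's proof is the single sentence that part (i) (resp.\ (ii), (iii)) follows from Corollary \ref{fourthcor} (i) together with \eqref{kummer} (resp.\ \eqref{gsecond}, \eqref{bailey}), and your specializations $\b\mapsto\b^2$, $\g\mapsto\g^2$, the choices of $\sigma$ and $x$, and the character bookkeeping fill in precisely the details the paper omits. The only caveat is that the hypotheses of Corollary \ref{fourthcor} (i) amount to more than the single condition you cite for part (i) (they also require $(\a,\g^2+\ol\sigma)=0$), but that gap is inherited from the paper's own terse statement rather than introduced by your argument.
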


\begin{proof}
Part (i) (resp. (ii), (iii)) follows from Corollary \ref{fourthcor} (i) and \eqref{kummer} (resp. \eqref{gsecond}, \eqref{bailey}). 
%\begin{enumerate}
%\item Using Corollary \ref{fourthcor} (i) and \eqref{kummer}, we obtain (i).
%
%\item Using Corollary \ref{fourthcor} (i) and \eqref{gsecond}, we obtain (ii).
%
%\item Using Corollary \ref{fourthcor} (i) and \eqref{bailey}, we complete the proof.
%\end{enumerate}
\end{proof}

%%%%%%%%%%%%%%%%%%%%%%%%%%% Kumabe's part %%%%%%%%%%%%%%%%%%%%%%%%%%%%%%%%

\subsection{Formulas for $F_{1: 0:C^{\prime}}^{2:0:C} $}

%%%%%%%%%%%%%%%%%%%%%%%%%%% Ito's part3 %%%%%%%%%%%%%%%%%%%%%%%%%%%%%%%%%%%

We give a finite field analogue of \cite[Theorem 4.1]{liuwang}:
\begin{align*}
\begin{split}
&\sum_{i,j=0}^{\infty} \Omega(j) \frac{\p{a}{i+j} \p{c}{i+j}  }{\p{b}{i+j}} \frac{x^{i} y^{j}}{i!j!} \\
&= (1-x)^{-a} \sum_{n=0}^{\infty} \frac{\p{a}{n} \p{b - c}{n} }{\p{b}{n} n!} \left( \frac{x}{x-1}\right)^{n}
\sum_{j=0}^{n} \Omega (j) \frac{\p{-n}{j}  \p{c}{n} }{j! \p{1+c-b-n}{j} } \left( -\frac{y}{x} \right)^{j},
\end{split}
\end{align*}
where $\Omega (j)$ is a complex sequence.

\begin{theo}\label{fifthtrans}
Assume  $(\a,  \b + \g) = (\b,  \g) =0$.  Let $f\colon  \widehat{\kappa^{*}} \rightarrow \mathbb{C}$ be a map.   
For any $x, y \in \kappa$ with $x \neq 0, 1$,  we have 
\begin{align*}
&\sum_{\n,  \m \in \widehat{\kappa^{*}}} 
f(\m) \frac{\p{\a}{\n\m} \p{\g}{\n\m} }{\pc{\b}{\n\m} \pc{\e}{\n} \pc{\e}{\m}} \n (x) \m (y)  \\
&= \oa (1-x) \sum_{\eta \in \widehat{\kappa^{*}} } \frac{\p{\a + \b\og }{\eta}  }{\pc{\b + \e }{\eta}} \eta \left( \frac{x}{x-1} \right) 
\sum_{\m \in \widehat{\kappa^{*}} } f(\m) \frac{\p{\oeta + \g }{\m} }{\pc{\e + \g\ol{\b\eta}}{\m}  } \m \left( - \frac{y}{x} \right)
\\
& \hspace{10mm} - (1-q)^{2} f(\og) \frac{ \pc{\e}{\b} }{ \pc{\oa}{\b} } \ob \g (-x)  \og (y) \oa\b (1-x) .
\end{align*}
\end{theo}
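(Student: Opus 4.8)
The plan is to follow the route of the proofs of Theorems~\ref{second trans} and~\ref{fourthtrans}. First I would apply the transitivity formula~\eqref{transitivity} along the splitting $\n\m = \m\cdot\n$, writing $\p{\a}{\n\m} = \p{\a}{\m}\p{\a\m}{\n}$, $\p{\g}{\n\m} = \p{\g}{\m}\p{\g\m}{\n}$ and $\pc{\b}{\n\m} = \pc{\b}{\m}\pc{\b\m}{\n}$. The inner sum over $\n$ then collapses to a hypergeometric sum, so that the left-hand side becomes
\[
(1-q)\sum_{\m \in \widehat{\kappa^{*}}} f(\m)\,\frac{\p{\a}{\m}\p{\g}{\m}}{\pc{\b}{\m}\pc{\e}{\m}}\,\m(y)\,\gh{\a\m,\g\m}{\b\m}{x}.
\]

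The core of the argument is an auxiliary Pfaff-type lemma, proved exactly as Lemma~\ref{Pfaff}, of the shape
\[
\gh{\a\m,\g\m}{\b\m}{x} = \oa\om(1-x)\,\gh{\a\m,\og\b}{\b\m}{\frac{x}{x-1}} + \delta(\g\m)(1-q)\,c\,\og(x),
\]
where $c$ is an explicit constant built from Gauss sums. Under the hypotheses $(\a,\b+\g) = (\b,\g) = 0$, the Pfaff transformation~\eqref{pfaff0} (fixing the numerator character $\a\m$ and transforming $\g\m \mapsto \overline{\g\m}\,\b\m = \og\b$) applies whenever $\m \neq \oa,\og$. I would then check the two exceptional characters separately using \eqref{cancellation}, \eqref{shift}, \eqref{geom} and \eqref{reflection}, exactly as in the proof of Lemma~\ref{pfaff1}. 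The key point, and the source of the asymmetry in the final statement, is that the case $\m = \oa$ (where the \emph{fixed} numerator parameter degenerates to $\e$) turns out to satisfy the plain transformation with \emph{no} correction, while only $\m = \og$ (where the \emph{transformed} parameter degenerates) contributes the $\delta(\g\m)$ term; this is the analogue of the distinction between $\m=\a\ob$ and $\m=\g\ob$ in Lemma~\ref{Pfaff}.

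Substituting this lemma back, the $\delta(\g\m)$ piece selects $\m = \og$ and, after simplification with \eqref{reflection}, produces the single boundary term $-(1-q)^{2} f(\og)\frac{\pc{\e}{\b}}{\pc{\oa}{\b}}\ob\g(-x)\og(y)\oa\b(1-x)$. For the main piece I would re-expand $\gh{\a\m,\og\b}{\b\m}{x/(x-1)}$ as a sum over $\n$, recombine $\p{\a}{\m}\p{\a\m}{\n} = \p{\a}{\n\m}$ and $\pc{\b}{\m}\pc{\b\m}{\n} = \pc{\b}{\n\m}$ by~\eqref{transitivity}, and then substitute $\eta = \n\m$. Applying the reflection identity~\eqref{prelection} to the $\om$-indexed symbols $\p{\og\b\eta}{\om}$ and $\pc{\eta}{\om}$ converts them into the $\m$-indexed symbols $\p{\oeta+\g}{\m}$ and $\pc{\e+\g\ol{\b\eta}}{\m}$; combining the various characters of $\m$ collapses their arguments to $-y/x$ and those of $\eta$ to $x/(x-1)$, yielding the displayed double sum. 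This bookkeeping is identical to that in the proof of Theorem~\ref{second trans}.

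The main obstacle is the auxiliary lemma, and within it the verification of the asymmetry: I expect the bulk of the work to be confirming that $\m = \oa$ requires no correction while $\m = \og$ does, together with pinning down the constant $c$ and its character arguments so that \eqref{reflection} reproduces exactly the coefficient $\frac{\pc{\e}{\b}}{\pc{\oa}{\b}}\ob\g(-x)\og(y)\oa\b(1-x)$ with the correct sign. The substitution $\eta = \n\m$ and the two applications of~\eqref{prelection} are routine but must be tracked carefully to land on the precise arguments $\eta(x/(x-1))$ and $\m(-y/x)$.
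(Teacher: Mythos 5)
Your proposal follows the paper's proof essentially verbatim: apply \eqref{transitivity} to reduce the inner sum to $(1-q)\sum_{\m}f(\m)\frac{\p{\a+\g}{\m}}{\pc{\b+\e}{\m}}\m(y)\gh{\a\m,\g\m}{\b\m}{x}$, invoke a Pfaff-type auxiliary lemma (the paper's Lemma \ref{euler3}) whose correction term is supported only on $\m=\og$ while the case $\m=\oa$ needs none, then substitute $\eta=\n\m$ and use \eqref{prelection} to land on the stated double sum. The only imprecision is that the correction term in the lemma is $\delta(\g\m)(1-q^{-1})\frac{g(\a\ob)g(\b\og)}{g(\a\og)}\ob\g(x)\oa\b(1-x)$ rather than a constant times $\og(x)$, but you explicitly flagged pinning down those character arguments as the remaining detail.
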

\begin{proof}
If we use \eqref{transitivity},  the left hand side is equal to
\begin{align*} 
(1-q) \sum_{\m \in \widehat{\kappa^{*}}} f(\m)  \frac{\p{\a + \g}{\m} }{\pc{\b + \e }{\m} } \m (y) \gh{\a\m,  \g\m}{\b\m}{x} . 
\end{align*}
By applying Lemma \ref{euler3} below and simple computation,  we can see that the sum above equals
\begin{align*}
&\oa (1-x)\sum_{\n,  \m \in \widehat{\kappa^{*}}} 
f(\m) \frac{\p{\a}{\m\n}  \p{\b\og}{\n}  \p{\g}{\m}  }{\pc{\b}{\m\n}  \pc{\e}{\n}  \pc{\e}{\m}}  \n \left( \frac{x}{x-1} \right) \m \left( \frac{y}{1-x} \right)  \\
&\hspace{20mm}- q^{-1}(1-q)^{2} f(\og) \frac{\p{\a + \g}{\og}  }{\pc{\b + \e}{\og}  } \og (y) \frac{g(\a\ob) g(\b\og)}{g(\a\og)} \ob\g (x) \oa\b(1-x) . 
\end{align*}
If we substitute $\eta\om$ for $\n$ and use \eqref{prelection},    the first term coincides with 
\begin{align*}
&\oa (1-x) \sum_{\eta \in \widehat{\kappa^{*}}} \frac{\p{\a + \b\og}{\eta}  }{\pc{\b + \e }{\eta}  } \eta \left( \frac{x}{x-1} \right) \sum_{\m \in \widehat{\kappa^{*}}} 
f(\m) \frac{\p{\og\b\eta}{\om} \p{\g}{\m} }{\pc{\e}{\m} \pc{\eta}{\om} } \m \left( - \frac{y}{x} \right) \\
&= \oa (1-x) \sum_{\eta \in \widehat{\kappa^{*}}} \frac{\p{\a + \b\og }{\eta}  }{\pc{\b + \e }{\eta} } \eta \left( \frac{x}{x-1} \right)
\sum_{\m \in \widehat{\kappa^{*}}} f(\m) \frac{\p{\oeta + \g }{\m}  }{\pc{\g\ol{\b\eta} + \e}{\m}  } \m \left( - \frac{y}{x} \right) .
\end{align*}
Finally,  if we simplify the second term by using \eqref{reflection},   the theorem follows.
\end{proof}

\begin{lemm}\label{euler3}
Assume $(\a,  \g + \b) = (\b,  \g) =0$. For any $\m \in \widehat{\kappa^{*}}$ and $x \in \kappa \backslash \{ 0, 1 \}$, 
we have
\begin{align*}
\gh{\a\m,  \g\m}{\b\m}{x} 
&= \oa\om (1-x) \gh{\a\m,  \b\og}{\b\m}{\frac{x}{x-1}} \\
& + \delta (\g\m) (1-q^{-1}) \frac{g (\a\ob) g(\b\og) }{g(\a\og)} \ob\g (x) \oa\b (1-x) .
\end{align*}
\end{lemm}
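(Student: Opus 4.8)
Lemma \ref{euler3} is a twisted version of Euler's transformation \eqref{euler0}, and the plan is to mirror exactly the structure used in the proofs of Lemmas \ref{pfaff1}, \ref{euler1}, \ref{euler2} and \ref{Pfaff}. The formula to be shown relates $\gh{\a\m,\g\m}{\b\m}{x}$ to $\gh{\a\m,\b\og}{\b\m}{x/(x-1)}$, which is not the naive Euler transform but a \emph{mixed} Euler--Pfaff identity: the upper parameters change from $(\a\m,\g\m)$ to $(\a\m,\b\og)$ while the argument moves to $x/(x-1)$. The clean generic case should follow from combining \eqref{euler0} with \eqref{pfaff0}; the delta-correction term $\delta(\g\m)$ accounts for the single exceptional character at which the generic hypergeometric identities break down.

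First I would dispose of the trivial case $x=0$, noting both sides vanish, so we may assume $x\neq 0,1$. Next, the generic case is when the pair $(\a\m,\g\m)$ satisfies the nondegeneracy hypothesis $(\a\m+\g\m,\e+\b\m)=0$ needed for \eqref{euler0} and \eqref{pfaff0} to apply; given $(\a,\g+\b)=(\b,\g)=0$, this holds precisely when $\m\neq\og$ (and the other excluded character $\oa$ gets absorbed automatically since $\delta(\g\m)=0$ there). In this range I would apply Euler \eqref{euler0} to rewrite $\gh{\a\m,\g\m}{\b\m}{x}$ in terms of $\gh{\oa\b,\og\b}{\b\m}{x}$, and then apply Pfaff \eqref{pfaff0} to move the argument to $x/(x-1)$, tracking the character prefactors $\ol{\a\g}\m(1-x)$ and so on; after cancellation using the transitivity \eqref{transitivity} and the reflection \eqref{prelection}, the prefactor should collapse to $\oa\om(1-x)$ as claimed, and the delta term is absent since $\delta(\g\m)=0$.

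The genuine work is the exceptional case $\m=\og$, where $\g\m=\e$ and the generic transformations fail because a trivial-character factor appears. Here I would follow the pattern of Lemma \ref{pfaff1}: use the cancellation formula \eqref{cancellation} to reduce $\gh{\a\og,\e}{\b\og}{x}$ to a $q F(\a\og,\b\og;x)+1$ expression, then apply the shift formula \eqref{shift} and the geometric-series evaluation \eqref{geom} to obtain an explicit closed form in Gauss sums and characters. I would carry out the same reduction on the right-hand side $\gh{\a\og,\b\og}{\b\og}{x/(x-1)}$ and verify both sides agree, the discrepancy being exactly the correction term $(1-q^{-1})\,g(\a\ob)g(\b\og)/g(\a\og)\cdot\ob\g(x)\oa\b(1-x)$. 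The main obstacle will be this bookkeeping: matching the Gauss-sum prefactors and the character arguments $1-x$ versus $1-x/(x-1)$ requires repeated use of \eqref{reflection} and the hypotheses $(\a,\g+\b)=(\b,\g)=0$, exactly as the last equality in the proof of Lemma \ref{pfaff1} invokes its standing assumptions. Since the paper states this is proved ``by similar computation to the proof of Lemma \ref{pfaff1},'' I expect the exceptional-case verification to be routine but delicate, and I would present it by computing both sides explicitly and confirming they coincide.
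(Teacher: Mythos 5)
Your overall strategy (generic case via the standard transformation formulas, exceptional cases via explicit computation with \eqref{cancellation}, \eqref{shift}, \eqref{reflection} and \eqref{geom}) is the same as the paper's, but there is a genuine gap in your identification of the exceptional cases. Under the hypotheses $(\a,\g+\b)=(\b,\g)=0$, the nondegeneracy condition $(\a\m+\g\m,\e+\b\m)=0$ reduces to $\delta(\a\m)+\delta(\g\m)=0$, i.e.\ it fails for \emph{both} $\m=\og$ and $\m=\oa$. You treat only $\m=\og$ as exceptional and assert that $\m=\oa$ ``gets absorbed automatically since $\delta(\g\m)=0$ there.'' That reasoning is not valid: the vanishing of the correction term at $\m=\oa$ does not mean the main identity holds automatically there, because at $\m=\oa$ the upper parameter $\a\m=\e$ is trivial and neither \eqref{euler0} nor \eqref{pfaff0} applies. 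The claimed equality $\gh{\e,\g\oa}{\b\oa}{x}=\gh{\e,\b\og}{\b\oa}{x/(x-1)}$ at $\m=\oa$ must be verified by a separate computation; the paper does exactly this, showing both sides equal $\frac{g(\oa\b)g(\ob\g)}{g(\oa\g)}\a\ob(x)\b\og(1-x)+1$ via \eqref{cancellation}, \eqref{shift} and \eqref{geom}. As written, your proof omits this case entirely.

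A secondary, non-fatal point: for the generic case you propose composing Euler \eqref{euler0} with Pfaff \eqref{pfaff0}, calling the target a ``mixed Euler--Pfaff identity.'' In fact a single application of Pfaff \eqref{pfaff0} with $(\a,\b,\g)\mapsto(\a\m,\g\m,\b\m)$ already gives
\begin{align*}
\gh{\a\m,\g\m}{\b\m}{x}=\ol{\a\m}(1-x)\gh{\a\m,\,\ol{\g\m}\cdot\b\m}{\b\m}{\frac{x}{x-1}}
=\oa\om(1-x)\gh{\a\m,\b\og}{\b\m}{\frac{x}{x-1}},
\end{align*}
which is the paper's one-line argument for $\m\neq\oa,\og$. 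Your two-step route can be made to work (applying Pfaff to the \emph{first} upper parameter of the Euler-transformed series), but it is an unnecessary detour and carries the same nondegeneracy requirement $\m\neq\oa,\og$, so it does not rescue the missing case.
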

\begin{proof}
The formula in the case $(\a\m + \g\m, \e + \b\m) =0$,  that is,  $\m \neq \oa,  \og$ follows from \eqref{pfaff0}.  
If $\m = \oa$ (resp.  $\m = \og$),   by  \eqref{cancellation},  \eqref{reflection} and  \eqref{shift},  
we can show that both sides are equal to 
\begin{align*}
&\frac{ g(\oa\b) g(\ob\g) }{g(\oa\g)} \a\ob (x)  \b\og (1-x) + 1. \\
&\left( resp. \hspace{3mm}
\frac{g(\a\ob) g(\b\og) }{g(\a\og)} \ob\g(x) \oa\b (1-x) + 1.
\right)
\end{align*}
Therefore we obtain the lemma.
\end{proof}

We have the following finite field analogues of \eqref{cor4.2} and \eqref{cor4.4}.
Similarly to Corollary \ref{secondcor} (iii), we remark that the function ${}_{3}F_{2}(x)$ in \eqref{cor4.4} reduces to ${}_{2}F_{1}(x)$ in the case of finite fields since the complex parameters $1+b/2$ and $b/2$ agree with the same character.

\begin{cor}\label{fifthcor}
\begin{enumerate}
\item Assume $(\a + \e, \b + \g) = (\b,  \g) = (\varphi,  \e +  \ob\g ) = 0$. For $x \in \kappa \backslash \{ 0, 1 \}$, we have 
\begin{align*}
F_{1:0:1}^{2:0:1} \left( \left. 
\begin{matrix}
\a,  \g \\
\b
\end{matrix}
; \hspace{1mm} ; 
\begin{matrix}
\varphi \\
\b \varphi
\end{matrix}
\right| x ,  -x
\right)  
= \oa (1-x) \gh{\a,  \b\varphi\og}{\b\varphi}{\frac{x}{x-1}} + \frac{\pc{\varphi + \e}{\b} }{ \pc{\oa + \og}{\b}} \ob (-x).
\end{align*}

\item Assume $(\a + \e, \b^{2} + \g^{2}) = (\b^{2}, \g^{2}) =0$. For $x \in \kappa \backslash \{ 0, 1 \}$, we have 
\begin{align*}
F_{1:0:1}^{2:0:1} \left( \left. 
\begin{matrix}
\a,  \g^{2} \\
\b^{2}
\end{matrix}
; \hspace{1mm} ; 
\begin{matrix}
\g\ob \\
\g\b
\end{matrix}
\right| x ,  -x
\right)  
&= \oa (1-x) {}_{2}F_{1} \left( 
\begin{matrix}
\a, \b\og \\
\b\g
\end{matrix}
; \frac{x}{x-1} \right) 
+ \ob^{2} (x) \frac{\p{\ob\g}{\b^{2}} \pc{\e}{\b^{2}} }{\p{\og^{2}}{\b^{2}} \pc{\oa}{\b^{2}}}. 
\end{align*}

\end{enumerate}
\end{cor}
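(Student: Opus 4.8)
The plan is to obtain both parts as specializations of Theorem \ref{fifthtrans}, closely paralleling the passage from Theorem \ref{second trans} to Corollary \ref{secondcor} (ii). First I would note that, after unwinding the definition, $(1-q)^{2}$ times the Kamp\'e de F\'eriet function on the left of each part is exactly the left-hand side of Theorem \ref{fifthtrans} with $y=-x$ and a suitable map $f$: for (i) take $f(\m)=\p{\varphi}{\m}/\pc{\b\varphi}{\m}$ with parameters $(\a,\g,\b)$, and for (ii) take $f(\m)=\p{\g\ob}{\m}/\pc{\g\b}{\m}$ after the substitution $\g\mapsto\g^{2}$, $\b\mapsto\b^{2}$ (so that the hypotheses of Theorem \ref{fifthtrans} read precisely $(\a,\b^{2}+\g^{2})=(\b^{2},\g^{2})=0$). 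Since $y=-x$ gives $\m(-y/x)=\m(1)=1$, the inner $\m$-sum on the right-hand side collapses to its value at $1$.

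With these choices the inner sum becomes $\sum_{\m}\p{\cdots}{\m}/\pc{\cdots}{\m}$, a ${}_{3}F_{2}(1)$ whose numerator characters are $\varphi,\oeta,\g$ (resp. $\g\ob,\oeta,\g^{2}$) and whose nontrivial denominator characters are $\b\varphi,\g\ol{\b\eta}$ (resp. $\g\b,\g^{2}\ol{\b^{2}\eta}$). A one-line check shows the product of the numerator characters equals that of the denominators, so this ${}_{3}F_{2}(1)$ is Saalsch\"utzian; I would then verify, exactly as in Corollary \ref{secondcor} (ii), that the non-degeneracy hypothesis of \eqref{saalschutz} holds for every $\eta$ appearing (this is where the assumption $(\varphi,\e+\ob\g)=0$ in (i) enters), and apply the Saalsch\"utz formula \eqref{saalschutz} to split the inner sum into its two standard terms.

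The main term comes from the first Saalsch\"utz term. Using \eqref{transitivity} I would absorb its $\eta$-dependence into the outer factor $\p{\a+\b\og}{\eta}/\pc{\b+\e}{\eta}$; crucially, the $\eta$-independent prefactor then collapses to $1$, because the surviving $q^{\delta}$-factors amount to $q^{-\delta(\b\varphi\og)-\delta(\b)}$ and both exponents vanish under the hypotheses ($\varphi\neq\ob\g$, and $\b\neq\e$ forced by $(\a+\e,\b+\g)=0$; likewise $\b\og\neq\e$ and $\b^{2}\neq\e$ in (ii)). What remains is the single sum $F(\a+\b\varphi\og+\b,\ \e+\b\varphi+\b;\ x/(x-1))$, and stripping the common character $\b$ from numerator and denominator by the cancellation formula \eqref{cancellation} yields $\oa(1-x)\gh{\a,\b\varphi\og}{\b\varphi}{\frac{x}{x-1}}$, the first term of (i). For (ii) the identical step, now with common character $\b^{2}$, produces $\oa(1-x)\gh{\a,\b\og}{\b\g}{\frac{x}{x-1}}$ directly; this is the finite-field shadow of the reduction of the complex ${}_{3}F_{2}$ in \eqref{cor4.4} to a ${}_{2}F_{1}$ noted just before the corollary.

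The hard part will be the bookkeeping of the three leftover contributions: the second Saalsch\"utz term summed against the outer $\eta$-factor, the correction produced by \eqref{cancellation}, and the boundary term $-(1-q)^{2}f(\og)\cdots$ of Theorem \ref{fifthtrans}. Each of these still carries spurious powers of $(1-x)$, so I would evaluate the residual one-dimensional $\eta$-sums by \eqref{shift} together with the geometric-series identity \eqref{geom}, rewrite the accumulated Gauss sums through the reflection formula \eqref{reflection}, and check that the $(1-x)$-powers cancel and the three pieces coalesce into the single monomial $\frac{\pc{\varphi+\e}{\b}}{\pc{\oa+\og}{\b}}\ob(-x)$ of (i) (resp. $\ob^{2}(x)\frac{\p{\ob\g}{\b^{2}}\pc{\e}{\b^{2}}}{\p{\og^{2}}{\b^{2}}\pc{\oa}{\b^{2}}}$ of (ii)). Matching these degenerate terms exactly, rather than the main hypergeometric term, is the delicate point, since it hinges on the precise $q^{\delta}$-discrepancies between $\p{\cdot}{\cdot}$ and $\pc{\cdot}{\cdot}$.
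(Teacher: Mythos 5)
Your proposal follows essentially the same route as the paper's proof: the same choices of $f$ (namely $\p{\varphi}{\m}/\pc{\b\varphi}{\m}$ for (i) and $\p{\g\ob}{\m}/\pc{\g\b}{\m}$ for (ii)) with $y=-x$ in Theorem \ref{fifthtrans}, reduction of the inner sum to a Saalsch\"utzian ${}_{3}F_{2}(1)$ evaluated by \eqref{saalschutz}, extraction of the main term by stripping the repeated character $\b$ (resp.\ $\b^{2}$) via \eqref{cancellation}, and consolidation of the residual pieces using \eqref{shift}, \eqref{geom} and \eqref{reflection}. This matches the paper's argument in both structure and detail, so no further comparison is needed.
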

\begin{proof}
(i) By applying Theorem \ref{fifthtrans} to $f(\m) = \p{\varphi}{\m} / \pc{\b\varphi}{\m}$ and putting  $y = -x$,  we have 
\begin{align*}
&F_{1:0:1}^{2:0:1} \left( \left. 
\begin{matrix}
\a,  \g \\
\b
\end{matrix}
; \hspace{1mm} ; 
\begin{matrix}
\varphi \\
\b \varphi
\end{matrix}
\right| x ,  -x
\right)   \\
&= \frac{1}{(1-q)^{2}} 
\oa (1-x) \sum_{\eta \in \widehat{\kappa^{*}}} \frac{\p{\a + \b\og }{\eta}  }{\pc{\b + \e }{\eta} } \eta \left( \frac{x}{x-1} \right) 
\sum_{\m \in \widehat{\kappa^{*}}} \frac{\p{\varphi + \oeta + \g }{\m}   }{\pc{\b\varphi + \g\ol{\b\eta} + \e}{\m} } \m (1)\\
& \hspace{40mm} -  \frac{\p{\varphi}{\og} \pc{\e}{\b}   }{\pc{\b\varphi}{\og} \pc{\oa}{\b} }  \ob (-x) \oa\b(1-x) .
\end{align*}
The first term is equal to 
\begin{align}
\frac{1}{1-q} 
\oa (1-x) \sum_{\eta \in \widehat{\kappa^{*}}} \frac{\p{\a +\b\og }{\eta} }{\pc{\b + \e }{\eta} } \eta \left( \frac{x}{x-1} \right) 
{}_{3}F_{2} \left( 
\begin{matrix}
\varphi ,  \oeta,  \g \\
\b\varphi,  \g\ol{\b\eta}
\end{matrix}
; 1 \right) . \label{fifthref2}
\end{align}
Note that the above ${}_{3}F_{2} (1)$ is Saalsch{\"u}tzian and $\varphi + \oeta + \g \neq \e +   \b\varphi + \g\ol{\b\eta}$ under our assumptions.  Hence we can apply \eqref{saalschutz} and obtain  
\begin{align*}
{}_{3}F_{2} \left( 
\begin{matrix}
\varphi ,  \oeta,  \g \\
\b\varphi,  \g\ol{\b\eta}
\end{matrix}
; 1 \right)
&= \frac{\gc{\b\varphi} g( \og\varphi \b\eta)  g (\og\b)  g(\b\eta) }{g(\og\b\eta) \gc{\b} \gc{\b\varphi\eta} \gc{\og\b\varphi}}
+ \frac{\gc{\b\varphi} \gc{\g\ol{\b\eta}} }{g(\varphi)  g(\oeta) g(\g)}  \\
&= \frac{\p{\og\b\varphi}{\eta} \p{ \b}{\eta}  }{\pc{\b\varphi}{\eta} \p{\og\b}{\eta}} 
+  \frac{\pc{\varphi}{\b}  \pc{\e}{\eta}}{ \pc{\og}{\b} \p{\og\b}{\eta}} \b(-1) . 
\end{align*}
Here we used  $(\b,  \e + \g) = (\varphi,  \ob\g) =0$ in the second equality.
Therefore \eqref{fifthref2} equals
\begin{align*}
&\oa (1-x) {}_{3}F_{2} \left( 
\begin{matrix}
\a,  \b\varphi\og,  \b \\
\b,  \b\varphi
\end{matrix}
;
\frac{x}{x-1} 
\right) 
+ \oa (1-x) \frac{\pc{\varphi}{\b} }{ \pc{\og}{\b}} \b(-1) 
F\left( \a,  \b;  \frac{x}{x-1} \right)  \\
&= \oa (1-x) \left( 
\gh{\a,  \b\varphi\og}{\b\varphi}{\frac{x}{x-1}} + q^{-1} \frac{\p{\a + \b\varphi\og}{\ob}  }{\pc{\e + \b\varphi}{\ob} }
\ob \left( \frac{x}{x-1} \right) 
\right) \\
& \hspace{10mm} + \oa (1-x) \frac{\pc{\varphi}{\b} \p{\a}{\ob}}{ \pc{\og}{\b} \pc{\b}{\ob} } \b(-1)  \ob \left(\frac{x}{x-1} \right) 
{}_{1}F_{0} \left( 
\begin{matrix}
\a \ob \\
\hspace{1mm}
\end{matrix}
;  \frac{x}{x-1}
\right) \\
&= \oa (1-x) \gh{\a,  \b\varphi\og  }{\b\varphi}{\frac{x}{x-1}}  \\
& \hspace{15mm} +\oa\b (1-x) \ob (-x) q^{-1} \frac{\p{\a + \b\varphi\og}{\ob}  }{\pc{\e + \b\varphi}{\ob} } +  \frac{\pc{\varphi + \e}{\b} }{ \pc{\oa + \og}{\b}} \ob (-x).  
\end{align*}
Here we used the formulas \eqref{cancellation} and \eqref{shift} in the first equality  and the formulas \eqref{geom} and \eqref{reflection} in the  last equality.
By \eqref{reflection} and noting that $(\b,  \e) = (\varphi,   \g\ob + \e) =0$,    we can show
\begin{align*}
\oa\b (1-x) \ob (-x) q^{-1} \frac{\p{\a + \og\b\varphi}{\ob}  }{\pc{\e + \b\varphi}{\ob} }
=  \frac{\p{\varphi}{\og} \pc{\e}{\b}   }{\pc{\b\varphi}{\og} \pc{\oa}{\b} }  \ob (-x) \oa\b(1-x).
\end{align*}
Hence the formula follows.

(ii) If we apply Theorem \ref{fifthtrans} to $f(\m) = \p{\g\ob}{\m} / \pc{\g\b}{\m}$ and put $y=-x$, we obtain 
\begin{align*}
&F_{1:0:1}^{2:0:1} \left( \left. 
\begin{matrix}
\a,  \g^{2} \\
\b^{2}
\end{matrix}
; \hspace{1mm} ; 
\begin{matrix}
\g\ob \\
\g\b
\end{matrix}
\right| x ,  -x
\right)    \\
&= \frac{1}{1-q} 
\oa (1-x) \sum_{\eta \in \widehat{\kappa^{*}}} \frac{\p{\a + \b^{2}\og^{2} }{\eta}  }{\pc{\b^{2} + \e }{\eta} } \eta \left( \frac{x}{x-1} \right) 
{}_{3}F_{2} \left( 
\begin{matrix}
\g\ob, \g^{2}, \oeta \\
\g\b, \g^{2} \ol{\b^{2}\eta}
\end{matrix}
;  1
\right) \\
& - \ob^{2}(-x) \oa\b^{2}(1-x) \frac{\p{\g\ob}{\og^{2}} \pc{\e}{\b^{2}}}{\pc{\g\b}{\og^{2}} \pc{\oa}{\b^{2}}} .
\end{align*}
The second term is equal to 
\begin{align*}
-\ob^{2}(x) \oa\b^{2}(1-x) \frac{\pc{\e}{\b^{2}}}{\pc{\oa}{\b^{2}}}.
\end{align*}
Since the above ${}_{3}F_{2}(1)$ in the first term is Saalsch{\"u}tzian and $\g\ob +\g^{2} + \oeta \neq \e + \g\b + \g^{2}\ol{\b^{2} \eta}$ under our assumptions, we have
\begin{align*}
{}_{3}F_{2} \left( 
\begin{matrix}
\g\ob, \g^{2}, \oeta \\
\g\b, \g^{2} \ol{\b^{2}\eta}
\end{matrix}
;  1
\right)
= \frac{\p{\og\b}{\eta} \p{\b^{2}}{\eta}  }{\pc{\g\b}{\eta} \p{\og^{2}\b^{2}}{\eta} } + \frac{\gc{\g\b} \gc{\g^{2}\ob^{2}} }{ g(\g\ob) g(\g^{2}) } \frac{\pc{\e}{\eta}}{\p{\b^{2}\og^{2}}{\eta}}. 
\end{align*}
Therefore 
\begin{align*}
&\frac{1}{1-q} 
\oa (1-x) \sum_{\eta \in \widehat{\kappa^{*}}} \frac{\p{\a + \b^{2}\og^{2} }{\eta}  }{\pc{\b^{2} + \e }{\eta} } \eta \left( \frac{x}{x-1} \right) 
{}_{3}F_{2} \left( 
\begin{matrix}
\g\ob, \g^{2}, \oeta \\
\g\b, \g^{2} \ol{\b^{2}\eta}
\end{matrix}
;  1
\right) \\
&= \oa (1-x) {}_{3}F_{2} \left( 
\begin{matrix}
\a, \b\og, \b^{2} \\
\b^{2}, \b\g
\end{matrix}
;  \frac{x}{x-1}
\right)
+ \oa (1-x) \frac{\gc{\g\b} \gc{\g^{2}\ob^{2}} }{ g(\g\ob) g(\g^{2}) }F\left( \a, \b^{2} ; \frac{x}{x-1} \right)  \\
&= \oa (1-x) {}_{2}F_{1} \left( 
\begin{matrix}
\a, \b\og \\
\b\g
\end{matrix}
;  \frac{x}{x-1}
\right)
+ \oa (1-x) q^{-1} \frac{\p{\a + \b\og}{\ob^{2}}}{\pc{\e + \b\g}{\ob^{2}}} \ob^{2} \left( \frac{x}{x-1} \right) \\
&+ \oa (1-x) \frac{\gc{\g\b} \gc{\g^{2}\ob^{2}} }{ g(\g\ob) g(\g^{2}) }F\left( \a, \b^{2} ; \frac{x}{x-1} \right) .
\end{align*}
Here we used \eqref{cancellation} in the last equality. 
By \eqref{reflection} (resp. \eqref{shift} and \eqref{geom}), one can show that the second (resp. third) term agrees with 
\begin{align*}
\ob^{2} (x) \oa\b^{2} (1-x) \frac{\pc{\e}{\b^{2}}}{\pc{\oa}{\b^{2}}}. \hspace{7mm}\left( resp. \hspace{1mm}\ob^{2} (x) \frac{\p{\ob\g}{\b^{2}} \pc{\e}{\b^{2}} }{\p{\og^{2}}{\b^{2}} \pc{\oa}{\b^{2}}}. \right)
\end{align*}
This proves the formula.
\end{proof}

\begin{remk}
Over the complex numbers, we have the trivial reduction formula
\begin{align*}
F_{1:0:0}^{2:0:0} \left( \left. 
\begin{matrix}
a,  c \\
b
\end{matrix}
;  \hspace{1mm} ;  \hspace{1mm} \right| x ,  y
\right) = \gh{a,  c}{b}{x + y},
\end{align*}
which follows from the binomial theorem. For any $x, y \in \kappa$ with $xy \neq 0$,  the following analogous formula holds:
\begin{align*}
F_{1:0:0}^{2:0:0} \left( \left. 
\begin{matrix}
\a,  \g \\
\b
\end{matrix}
;  \hspace{1mm} ;  \hspace{1mm} \right| x ,  y
\right) = \gh{\a,  \g}{\b}{x + y} +  \delta (x+y). 
\end{align*}
This follows from the following computation:  By putting $\n\m = \eta$ and using \eqref{prelection},  we have
\begin{align*}
F_{1:0:0}^{2:0:0} \left( \left. 
\begin{matrix}
\a,  \g \\
\b
\end{matrix}
;  \hspace{1mm} ;  \hspace{1mm} \right| x ,  y
\right)
&= 
\frac{1}{(1-q)^{2}} 
\sum_{\n,  \m \in \widehat{\kappa^{*}}}  \frac{\p{\a}{\n\m} \p{\g}{\n\m} }{\pc{\b}{\n\m} \pc{\e}{\n} \pc{\e}{\m} } \n (x) \m (y) \\
&= \frac{1}{1-q} \sum_{\eta \in \widehat{\kappa^{*}}}
\frac{\p{\a + \g }{\eta}  }{\pc{\b + \e }{\eta}} 
\eta (x)  {}_{1}F_{0} \left(
\begin{matrix}
\oeta \\
\hspace{1mm}
\end{matrix}
; - \frac{y}{x} \right).
\end{align*}
If we use \eqref{geom},  the right hand side is equal to 
\begin{align*}
&\frac{1}{1-q} \left( \sum_{\eta \neq \e} \frac{\p{\a + \g }{\eta}  }{\pc{\b + \e }{\eta}  } 
\eta \left( x+ y \right) 
 -q \delta \left( 1 + \frac{y}{x} \right) +1   \right) \\
&= \frac{1}{1-q} \left( 
\sum_{\eta \in \widehat{\kappa^{*}} } \frac{\p{\a}{\eta} \p{\g}{\eta}  }{\pc{\b}{\eta} \pc{\e}{\eta} } 
\eta \left( x+ y \right) 
- \e (x+y) - q\delta \left( x+y \right)  + 1 \right)  \\
&= \gh{\a,  \g}{\b}{x + y} + \delta (x+y).
\end{align*}
\end{remk}

As before, by specializing parameters and arguments in Corollary \ref{fifthcor}, we have the following analogous formulas of \cite[Corollary 5.7]{liuwang}.

\begin{cor}
Assume $p \neq 2$.
\begin{enumerate}
\item Suppose that $(\a^2,\b+\g^2)=(\b,\g^2+\e)=(\g^2+\phi\a\ob\g,\e)=0$. Then, we have
\begin{align*}
&\kF{2:0:1}{1:0:1}{\a^2,\g^2}{\b}{}{}{\phi\a\g\ob}{\phi\a\g}{\dfrac{1}{2},-\dfrac{1}{2}}\\
&=\a(4)\sum_{\chi^2=\e}\dfrac{\gc{\phi\a\g}g(\a\chi)}{g(\a^2)\gc{\phi\g\chi}}+\b(-2)\dfrac{\pc{\phi\a\g\ob +\e}{\b}}{\pc{\oa^2+\og^2}{\b}}.
\end{align*}

\item Suppose that $(\a^2,\b + \g + \g^{2} + \b\g + \e) = (\b, \g + \e) = (\g, \e)=0$. Then, we have
\begin{align*}
&\kF{2:0:1}{1:0:1}{\a^2,\g}{\b}{}{}{\a^2\overline{\b\g}}{\a^2\og}{-1,1}\\
&=\oa(4)\sum_{\chi^2=\e}\dfrac{g(\a^2\og) g(\phi)}{g(\phi\a\chi)g(\a\og\chi)} + \dfrac{\pc{\a^2\overline{\b\g}+\e}{\b}}{\pc{\oa^2+\og}{\b}}.
\end{align*}

\item Suppose that $(\a^2,\b+\g^2+\e)=(\b,\g^2+\e)=(\g^2, \a^{2}\b + \a^4+\e)=0$. Then, we have
\begin{align*}
&\kF{2:0:1}{1:0:1}{\a^2,\g^2}{\b}{}{}{\overline{\a^2\b}\g^2}{\oa^2\g^2}{-1,1}\\
&=\oa(4)\sum_{\chi^2=\e}\dfrac{g(\oa\g) g(\phi\oa\g)}{g(\phi\g\chi)g(\oa^2\g\chi)} + \dfrac{\pc{\overline{\a^2\b}\g^{2}+\e}{\b}}{\pc{\oa^2+\og^2}{\b}}.
\end{align*}
\end{enumerate}
\end{cor}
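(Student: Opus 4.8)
The plan is to obtain each of the three identities by specialising the parameters and the argument in the reduction formula of Corollary~\ref{fifthcor}~(i) so that the single Gauss hypergeometric factor on its right-hand side lands at an argument where one of the summation formulas \eqref{kummer}, \eqref{gsecond}, \eqref{bailey} applies. Recall that Corollary~\ref{fifthcor}~(i) reads
\[
\kF{2:0:1}{1:0:1}{\a,\g}{\b}{}{}{\varphi}{\b\varphi}{x,-x}
=\oa(1-x)\gh{\a,\b\varphi\og}{\b\varphi}{\frac{x}{x-1}}+\frac{\pc{\varphi+\e}{\b}}{\pc{\oa+\og}{\b}}\ob(-x),
\]
so its left-hand side already has the shape $F^{2:0:1}_{1:0:1}$ required by the statement, and the whole problem collapses to evaluating one ${}_2F_1$.

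For part (i) I would put $\a\mapsto\a^2$, $\g\mapsto\g^2$, $\varphi\mapsto\phi\a\g\ob$ (keeping $\b$) and take $x=1/2$; since $\b\ob=\e$ the lower parameter becomes $\b\varphi=\phi\a\g$, the left-hand side is exactly the function in the statement, and $x/(x-1)=-1$. The inner factor is then $\gh{\a^2,\phi\a\og}{\phi\a\g}{-1}$, and as $\a^2\cdot\ol{\phi\a\og}=\phi\a\g$ this is precisely the left-hand side of Kummer's formula \eqref{kummer} with upper parameters $\a^2$ and $\phi\a\og$; reindexing the resulting sum over the square roots of $\a^2$ by $\a\chi$ ($\chi^2=\e$) produces the displayed $\chi$-sum. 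For parts (ii) and (iii) I would instead take $x=-1$, so that $x/(x-1)=1/2$, and substitute $(\a,\g,\varphi)\mapsto(\a^2,\g,\a^2\ol{\b\g})$ and $(\a^2,\g^2,\ol{\a^2\b}\g^2)$ respectively. The inner ${}_2F_1$ then becomes $\gh{\a^2,(\a\og)^2}{\a\cdot\a\og}{1/2}$ in case (ii), matching the Gauss second formula \eqref{gsecond} with its two parameters equal to $\a$ and $\a\og$, and $\gh{\a^2,\oa^2}{(\oa\g)^2}{1/2}$ in case (iii), matching the Bailey formula \eqref{bailey} with its ``$\g$'' replaced by $\oa\g$.

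The rest is bookkeeping of the prefactor, the residual term, and the Gauss-sum normalisations. The prefactor $\oa(1-x)$, whose first upper parameter is the square $\a^2$ throughout, equals $\a(4)$ when $x=1/2$ (part (i)) and $\oa(4)$ when $x=-1$ (parts (ii) and (iii)); the character $\ob(-x)$ in the residual term equals $\b(-2)$ for $x=1/2$ and is trivial for $x=-1$; and the quotient $\pc{\varphi+\e}{\b}/\pc{\oa+\og}{\b}$ is carried over verbatim, with $\oa,\og$ read as $\oa^2,\og^2$ in (i) and (iii) and as $\oa^2,\og$ in (ii). To match the summed term one also collapses $\gc{\cdot}=g(\cdot)$ for the nontrivial characters $\a^2\og$ in (ii) and $\oa\g,\phi\oa\g$ in (iii) (no such collapse being needed in (i)), and \eqref{reflection} can be used to tidy the constants.

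I expect the genuinely delicate point to be neither the algebra nor the summation step but checking that the hypotheses imposed here are exactly what legitimises everything at once: under each substitution they must yield the three non-degeneracy conditions of Corollary~\ref{fifthcor}~(i), the hypothesis of the summation formula invoked, and the nonvanishing of the characters whose Gauss sums are normalised. For example, in part (i) the condition $(\varphi,\e+\ob\g^2)=0$ (after substitution) forces $\phi\a\og\neq\e$, which is not listed directly but follows because $\phi\a=\g$ would give $\a^2=\g^2$, contradicting $(\a^2,\b+\g^2)=0$; the analogous checks in (ii) and (iii), and the verifications that $\a^2\og\neq\e$, $\oa\g\neq\e$, $\phi\oa\g\neq\e$, reduce to similar one-line deductions. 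Threading all these constraints through a single parameter assignment, together with the reindexing of the $\chi$-sums, is where the care lies.
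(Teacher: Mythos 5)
Your proposal is correct and follows exactly the paper's (one-line) proof: specialize Corollary~\ref{fifthcor}~(i) with the indicated parameters at $x=1/2$ (resp.\ $x=-1$) so that the resulting ${}_2F_1$ sits at $-1$ (resp.\ $1/2$), then apply \eqref{kummer}, \eqref{gsecond}, \eqref{bailey} respectively. Your substitutions, the evaluation of the prefactors $\oa^2(1-x)$ and $\ob(-x)$, the reindexing $\a'=\a\chi$, and the checks that the relevant characters are nontrivial (so that $\gc{\cdot}=g(\cdot)$) all match what the paper's terse proof implicitly requires.
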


\begin{proof}
Part (i) (resp. (ii), (iii)) follows from Corollary \ref{fifthcor} and \eqref{kummer} (resp. \eqref{gsecond}, \eqref{bailey}).
%\begin{enumerate}
%\item Using Corollary \ref{fifthcor} and \eqref{kummer}, we obtain (i).
%
%\item Using Corollary \ref{fifthcor} and \eqref{gsecond}, we obtain (ii).
%
%\item Using Corollary \ref{fifthcor} and \eqref{bailey}, we complete the proof.
%\end{enumerate}
\end{proof}

\section*{Acknowledgement}
The authors would  like to thank Noriyuki Otsubo and Takato Senoue for valuable discussions. 
They also would like to thank Shinichi Kobayashi and Noriyuki Otsubo for a lot of helpful comments on a draft version of this paper. 
Finally, they would like to thank the anonymous referees for many helpful comments and suggestions. 
The second author was supported by JSPS KAKENHI Grant Number JP22J21285 and WISE program (MEXT) at Kyushu University.
The third author was supported by JST SPRING, Grant Number JPMJSP2109.
The fourth author was supported by Waseda University Grant for Special Research Projects, Project number 2022C-294.

\end{document}